\documentclass[12pt]{article}
\title{Dp-finite fields VI: the dp-finite Shelah conjecture}
\author{Will Johnson}

\usepackage{amsmath, amssymb, amsthm}    	
\usepackage{fullpage} 	
\usepackage{amscd}
\usepackage{hyperref}
\usepackage[all]{xy}
\usepackage{centernot}

\DeclareMathOperator*{\forkindep}{\raise0.2ex\hbox{\ooalign{\hidewidth$\vert$\hidewidth\cr\raise-0.9ex\hbox{$\smile$}}}}

\newcommand{\ACVF}{\operatorname{ACVF}}
\newcommand{\Jac}{\operatorname{Jac}}

\newcommand{\All}{\operatorname{All}}

\newcommand{\wt}{\operatorname{wt}}

\newcommand{\Frac}{\operatorname{Frac}}

\newcommand{\characteristic}{\operatorname{char}}
\newcommand{\Spec}{\operatorname{Spec}}
\newcommand{\MaxSpec}{\operatorname{MaxSpec}}

\newcommand{\Th}{\operatorname{Th}}

\newcommand{\res}{\operatorname{res}}
\newcommand{\Aut}{\operatorname{Aut}}

\newcommand{\tp}{\operatorname{tp}}

\newcommand{\val}{\operatorname{val}}

\newcommand{\dpr}{\operatorname{dp-rk}}

\newtheorem{theorem}{Theorem}[section] 
\newtheorem{lemma}[theorem]{Lemma}

\newtheorem{corollary}[theorem]{Corollary}
\newtheorem{fact}[theorem]{Fact}

\newtheorem{proposition}[theorem]{Proposition}
\newtheorem*{theorem-star}{Theorem}
\newtheorem*{lemma-star}{Theorem}
\newtheorem*{conjecture-star}{Conjecture}

\theoremstyle{definition}
\newtheorem{definition}[theorem]{Definition}

\theoremstyle{remark}
\newtheorem{remark}[theorem]{Remark}
\newtheorem{claim}[theorem]{Claim}

\newtheorem{warning}[theorem]{Warning}

\newtheorem*{acknowledgment}{Acknowledgments}

\newcommand{\Qq}{\mathbb{Q}}

\newcommand{\Rr}{\mathbb{R}}

\newcommand{\Zz}{\mathbb{Z}}

\newcommand{\Nn}{\mathbb{N}}

\newcommand{\Ff}{\mathbb{F}}

\newcommand{\Oo}{\mathcal{O}}
\newcommand{\mm}{\mathfrak{m}}
\newcommand{\pp}{\mathfrak{p}}
\newcommand{\qq}{\mathfrak{q}}

\newenvironment{claimproof}[1][\proofname]
               {
                 \proof[#1]
                 
               }
               {
                 \endproof
               }

\begin{document}
\maketitle

\begin{abstract}
  We prove the dp-finite case of the Shelah conjecture on NIP fields.
  If $K$ is a dp-finite field, then $K$ admits a non-trivial definable
  henselian valuation ring, unless $K$ is finite, real closed, or
  algebraically closed.  As a consequence, the conjectural
  classification of dp-finite fields holds.  Additionally, dp-finite
  valued fields are henselian.  Lastly, if $K$ is an unstable
  dp-finite expansion of a field, then $K$ admits a unique definable
  V-topology.
\end{abstract}

\section{Introduction}
A structure is \emph{dp-finite} if it has finite dp-rank.  We prove
the following facts about dp-finite fields and valued fields:
\begin{theorem}[dp-finite henselianity conjecture]
  Let $(K,v)$ be a dp-finite valued field.  Then $v$ is henselian.
\end{theorem}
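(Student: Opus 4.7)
The plan is to bootstrap off two heavier theorems announced in the same abstract: the dp-finite Shelah conjecture itself, and the uniqueness of a definable V-topology on any unstable dp-finite expansion of a field. Given both, the henselianity of $v$ reduces to a short case split.

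First, reduce to the case where $v$ is non-trivial, as trivial valuations are henselian. The structure $(K,v)$ is then unstable (the value group with its ordering is interpretable) while remaining dp-finite by hypothesis, and the $v$-adic topology is a definable V-topology on $(K,v)$. The pure-field reduct $K$ inherits dp-finiteness, so the Shelah conjecture applies to $K$. If $K$ is finite, then $v$ must be trivial, contradicting our reduction. If $K$ is algebraically closed, then $v$ is henselian for the vacuous reason that $K$ has no proper algebraic extensions over which uniqueness of extension must be checked. If $K$ is real closed, then every valuation on $K$ is convex with respect to the unique ordering and hence henselian by a classical theorem.

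Otherwise, $K$ carries a definable, non-trivial, henselian valuation $u$, which is then definable in $(K,v)$ as well. Thus $u$ induces a definable V-topology $\tau_u$ on the unstable dp-finite structure $(K,v)$, and by the V-topology uniqueness theorem we must have $\tau_u = \tau_v$. Since for valuations on a field henselianity is a property of the induced V-topology --- i.e., corresponds to $t$-henselianity of the topology --- the henselianity of $u$ transfers to $v$.

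The main obstacle is proving the two supporting results, namely the Shelah conjecture and V-topology uniqueness, each of which is substantial in its own right. A secondary concern is logical ordering: the proofs of those two theorems must not themselves invoke the henselianity of $v$, or we would have to rearrange the deduction (for instance by establishing V-topology uniqueness directly from the dp-finite machinery of the earlier papers and then feeding it into the Shelah conjecture). Assuming this ordering can be maintained, the argument above completes the proof.
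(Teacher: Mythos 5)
Your high-level plan differs from the paper's: the paper deduces \emph{both} the Shelah and henselianity conjectures directly from the V-topology uniqueness theorem via Proposition~6.4 of \cite{prdf4}, whereas you feed the Shelah conjecture into a case analysis. The cases where $K$ is finite, algebraically closed, or real closed are handled correctly. The gap is in your final step. Henselianity of a valuation is \emph{not} a property of the induced V-topology: the topological notion is t-henselianity, and while every henselian valuation induces a t-henselian topology, a valuation inducing a t-henselian topology need not itself be henselian (Prestel and Ziegler construct t-henselian non-henselian fields). Concretely, let $u$ be the definable non-trivial henselian valuation supplied by the Shelah conjecture, and suppose the residue field $Ku$ carries a non-henselian valuation $\bar{v}$. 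The composition $v = \bar{v} \circ u$ is then a valuation on $K$ with $\Oo_v \subseteq \Oo_u$; being a non-trivial refinement of $u$, it is dependent with $u$ and induces exactly the topology $\tau_u$, yet $v$ is not henselian, because a composite valuation is henselian if and only if both factors are. So the equality $\tau_u = \tau_v$ together with henselianity of $u$ does not yield henselianity of $v$, and your argument would ``prove'' henselianity even in a configuration where it fails.

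Excluding this configuration is exactly the content of the machinery you are skipping: one must apply V-topology uniqueness not only to $(K,v)$ but to the dp-finite structures induced on finite extensions of $K$ (equivalently, to the residue situation above). That is what Proposition~6.4 of \cite{prdf4}, following Halevi--Hasson--Jahnke, does: if $v$ is not henselian, one passes to a finite Galois extension $L/K$ on which $v$ admits two distinct conjugate extensions and manufactures from them two distinct definable V-topologies on $L$, contradicting uniqueness there. Your concern about logical ordering is resolved in the paper by deriving both the Shelah conjecture and henselianity from V-topology uniqueness, so the dependency you propose (Shelah first, then henselianity) is unnecessary; but as written, your deduction does not close without repairing the last step along these lines.
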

\begin{theorem}[dp-finite Shelah conjecture]
  Let $K$ be a dp-finite field.  Then one of the following holds:
  \begin{itemize}
  \item $K$ is finite.
  \item $K$ is algebraically closed.
  \item $K$ is real closed.
  \item $K$ admits a definable non-trivial henselian valuation.
  \end{itemize}
\end{theorem}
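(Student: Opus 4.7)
The plan is to combine the dp-finite henselianity conjecture stated immediately above with a separate construction producing \emph{some} definable non-trivial valuation: since every such valuation on a dp-finite field is automatically henselian by that theorem, the Shelah conjecture reduces to showing that any infinite dp-finite field $K$ which is neither algebraically closed nor real closed admits a definable non-trivial valuation ring.

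First I would split on stability. If $K$ is stable as a pure field, then by the classification of stable dp-finite fields developed earlier in this series, $K$ is algebraically closed, contradicting our hypothesis. So I may assume $K$ (in the given structure) is unstable.

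For the unstable case, the essential input is the third result announced in the abstract: every unstable dp-finite expansion of a field carries a unique definable V-topology $\tau$. Its proof is the main technical core; I expect it to locate a type-definable ``infinitesimal'' additive subgroup via dp-rank and coheir/forking analysis, and to verify that its translates form the basis of a V-topology. Granted $\tau$, I would appeal to the D\"urbaum--Kowalsky theorem: every V-topology on a field is induced either by a non-trivial valuation or by an archimedean absolute value. In the valuation case the valuation ring is recoverable from $\tau$ by a first-order formula, hence definable, and the henselianity theorem finishes. In the archimedean case, $K$ embeds densely into $\Rr$ or $\mathbb{C}$; here a separate argument invoking dp-finiteness together with the definability of $\tau$ should force $K$ to be real closed or algebraically closed, contradicting the standing assumption.

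The principal obstacle is not the skeleton above, which is essentially classical once the pieces are in place, but the production of the definable V-topology in the unstable case. That result sits on top of the entire dp-finite field machinery of papers I--V, and its proof---rather than the passage from V-topology to henselian valuation---represents the genuinely new content required.
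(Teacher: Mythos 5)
Your reduction is essentially the paper's own: the Shelah conjecture is deduced from the existence and uniqueness of a definable V-topology on an unstable dp-finite field, combined with the stable case, the Kowalsky--D\"urbaum dichotomy for V-topologies, and the henselianity theorem --- exactly the content of (the proof of) Proposition~6.4 of \cite{prdf4}, which is what the paper invokes. One correction to your aside on the deferred input: the type-definable infinitesimals only produce a W-topology of finite weight, \emph{not} a V-topology (the paper recalls an explicit rank-2 counterexample), and the genuinely new content here is that a W-topology with open squaring map has a \emph{unique} V-topological coarsening, proved by decomposing the W-topology into an independent sum of local components.
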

By Theorems~3.3 and 3.11 of \cite{halevi-hasson-jahnke}, this implies
a classification of dp-finite fields up to elementary equivalence:
\begin{theorem}
  A field $K$ is dp-finite if and only if there is a henselian
  defectless valuation $v$ on $K$ such that
  \begin{itemize}
  \item The residue field $Kv$ is elementarily equivalent to
    $\Ff_p^{alg}$ or a local field of characteristic 0, i.e., a finite
    extension of $\Rr$ or $\Qq_p$.
  \item The value group $vK$ is dp-finite as an ordered abelian group.
  \item If $\characteristic(K) = p$, then the value group $vK$ is
    $p$-divisible.
  \item If $(K,v)$ has mixed characteristic $(0,p)$, then
    $[-v(p),v(p)] \subseteq p \cdot vK$.
  \end{itemize}
  Moreover, when the above conditions hold, the theory of $K$ is
  determined by the theory of $Kv$ and the theory of $vK$ (with $v(p)$
  named as a constant in the mixed characteristic case).
\end{theorem}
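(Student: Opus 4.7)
The plan is to reduce this classification to the two immediately preceding theorems (the dp-finite Shelah conjecture and the dp-finite henselianity conjecture) together with Theorems~3.3 and 3.11 of \cite{halevi-hasson-jahnke}, which already provide such a classification conditional on exactly those two conjectures. Once the conjectures are established, the conditional classification becomes unconditional, so the proof is essentially an application of the HHJ results.

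I would split the biconditional into two directions. The \emph{backward} direction --- that any $K$ carrying a henselian defectless valuation with the listed residue-field, value-group, and characteristic properties is dp-finite --- is the direct content of Theorem~3.11 of \cite{halevi-hasson-jahnke}, which builds dp-finite henselian valued fields from such data by a standard Ax--Kochen/Ershov transfer and does not use either of the two theorems above. For the \emph{forward} direction, given a dp-finite $K$, I would apply the dp-finite Shelah conjecture to obtain four cases. In the three "base" cases ($K$ finite, real closed, or algebraically closed) one takes $v$ trivial and verifies the bullet-point conditions by inspection: here $Kv = K$ is itself $\Ff_p^{alg}$ or a finite extension of $\Rr$ or $\Qq_p$ (finite fields are elementarily equivalent to finite extensions of $\Qq_p$ via the appropriate henselian interpretation handled in HHJ), and the value-group conditions are vacuous. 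In the remaining case $K$ carries a definable non-trivial henselian valuation $v_0$, and the dp-finite henselianity conjecture guarantees that every valuation arising from $v_0$ by coarsening or refinement is henselian; Theorem~3.3 of \cite{halevi-hasson-jahnke} then supplies the required henselian defectless $v$, identifies $Kv$ and $vK$ up to elementary equivalence, and delivers the divisibility and mixed-characteristic constraints. The "moreover" clause asserting that $\Th(K)$ is determined by $\Th(Kv)$, $\Th(vK)$, and $v(p)$ is likewise part of the HHJ Ax--Kochen/Ershov-style statement.

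The main obstacle is not conceptual but bookkeeping: one must check that the precise hypotheses of Theorems~3.3 and 3.11 of \cite{halevi-hasson-jahnke} are matched by the output of our two preceding theorems, including the subtle point that the canonical henselian valuation chosen by HHJ in their analysis coincides (up to an appropriate refinement or coarsening) with a definable henselian valuation produced by the dp-finite Shelah conjecture. All substantive model-theoretic work has been done either in the proofs of the two preceding theorems or in \cite{halevi-hasson-jahnke}; the present theorem is a clean consequence once these are aligned.
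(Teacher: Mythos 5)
Your proposal matches the paper's own treatment exactly: the paper offers no argument beyond the one-line remark that the classification follows from Theorems~3.3 and 3.11 of Halevi--Hasson--Jahnke once the dp-finite Shelah and henselianity conjectures are established, which is precisely the reduction you describe. One caveat worth noting: your aside that a finite field is elementarily equivalent to a finite extension of $\Qq_p$ is false (one is finite, the other infinite); the HHJ theorems are stated for \emph{infinite} fields, and the finite case is degenerate and should be excluded from the biconditional rather than forced into the residue-field clause.
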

\begin{remark}
  ~
  \begin{enumerate}
  \item Dp-finite ordered abelian groups are classified in
    \cite{sd-groups-dolich-goodrick,sd-groups-farre,sd-groups-halevi-hasson}.
  \item Theorems~3.3 and 3.11 of \cite{halevi-hasson-jahnke} are
    phrased for strongly dependent fields, but the proof applies to
    dp-finite fields as well.  All strongly dependent fields are
    conjectured to be dp-finite (\cite{halevi-hasson-jahnke},
    Proposition~3.9).
  \item The valuation $v$ is not uniquely determined by $K$, and the
    map from $(\Th(Kv),\Th(vK))$ to $\Th(K)$ is many-to-one, rather
    than a bijection.  For example, if $(K,v_0) \models \ACVF$,
    we could take $v$ to be either $v_0$ or the trivial valuation.  In
    the first case, the value group $vK$ would be a divisible ordered
    abelian group; in the second case, it would be trivial.
  \end{enumerate}
\end{remark}

\begin{theorem}\label{thm:v-top-cor}
  Let $(K,+,\cdot,\ldots)$ be an unstable dp-finite field, possibly
  with extra structure.  Then $K$ admits a unique definable
  V-topology.
\end{theorem}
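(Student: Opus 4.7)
The plan is to establish existence and uniqueness of a definable V-topology separately. For existence I would pass to the pure field reduct $(K,+,\cdot)$, which has dp-rank no larger than $K$ and is therefore dp-finite. By the dp-finite Shelah conjecture proved above, this reduct is finite, algebraically closed, real closed, or admits a definable non-trivial henselian valuation. The finite case is impossible since $(K,+,\cdot,\ldots)$ is unstable. If the reduct is real closed, the order topology is a definable V-topology; if the reduct admits a definable henselian valuation, the valuation topology is a definable V-topology; in both cases the V-topology is already definable in the pure field, hence \emph{a fortiori} in the expansion. The only remaining case is that the pure field reduct is algebraically closed, so the pure field is stable and the unstability of $K$ comes entirely from the extra structure. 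Here I would invoke results from the earlier papers in this series, where it is shown that any unstable dp-finite expansion of a field admits a definable V-topology, constructed from an unstable formula by refining it to a neighborhood basis of $0$ satisfying the V-topology axioms.

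For uniqueness, suppose toward contradiction that $\tau_1 \neq \tau_2$ are two definable V-topologies on $K$. By the classical theory of V-topologies, any two V-topologies on a field are either equivalent (hence equal as topologies) or independent, and in the independent case the approximation theorem produces, for every finite prescription of ``small''/``large'' conditions with respect to $(\tau_1,\tau_2)$, an element of $K$ realizing it. Combined with the definability of $\tau_1$ and $\tau_2$, this independence should yield, for every $n$, a definable ict-pattern of depth $n$, contradicting finiteness of $\dpr(K)$. Therefore $\tau_1 = \tau_2$.

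The main obstacle is the existence step when the pure field reduct is algebraically closed. There the Shelah conjecture gives nothing useful---algebraically closed fields carry neither a definable valuation nor a definable order---so the V-topology must be built directly from a witness of unstability in the extra structure. This is the nontrivial content of the earlier papers in this series, which extract from an unstable NIP formula on a dp-finite field a definable family of sets that refines to a V-topology neighborhood basis of $0$. By contrast, the uniqueness argument reduces cleanly to the classical V-topology dichotomy together with a direct dp-rank computation.
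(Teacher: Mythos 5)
Your existence step is circular as written: in this paper the dp-finite Shelah conjecture is \emph{deduced from} Theorem~\ref{thm:v-top-cor} (via Proposition~6.4 of \cite{prdf4}), so you cannot invoke the Shelah conjecture to prove the theorem. This part is repairable, because existence really is a citation to prior work, with no case division on the pure-field reduct needed: the canonical topology of \cite{prdf,prdf2} is a definable W-topology of weight at most $\dpr(K)$, every W-topology has at least one V-topological coarsening, and the V-topological coarsenings of the canonical topology are exactly the definable V-topologies (\cite{prdf5}). In particular your worry about the algebraically closed reduct case is a non-issue, since the construction never goes through the Shelah conjecture at all.

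The genuine gap is in your uniqueness step, which is in fact the entire content of this paper. You are right that two distinct definable V-topologies are independent and that the approximation theorem applies, but this only produces an ict-pattern of depth $2$ (one row per topology), hence only $\dpr(K)\ge 2$; that is no contradiction with dp-finiteness, which permits any finite rank. Two independent topologies cannot be bootstrapped into patterns of unbounded depth, and ruling out, say, a dp-rank-$2$ field with two independent definable V-topologies is precisely the hard case (the paper even recalls an example from \cite{prdf4} of a dp-rank-$2$ expanded field whose canonical topology is not a V-topology, so the situation is delicate). The paper's actual route is different: dp-finiteness enters only through the fact that the squaring map $K^\times\to K^\times$ is open for the canonical topology (multiplicative infinitesimals, \cite{prdf2}); one then proves a purely topological decomposition theorem (Theorems~\ref{thm:non-loc1} and \ref{thm:non-loc2}) writing any W-topology as an independent sum of local components, each with a unique V-topological coarsening, these being in bijection with all V-topological coarsenings. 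If there were two, one could choose $x$ infinitesimally close to $1$ in one component and to $-1$ in another, so that $x^2\approx 1$ while $x\not\approx 1$, contradicting openness of squaring (Corollary~\ref{sq-weird}). Your assessment of where the difficulty lies is therefore inverted: existence is the easy citation, and uniqueness in characteristic $0$ is the new theorem.
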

This implies the dp-finite Shelah and henselianity conjectures, by
(\cite{prdf4}, Proposition~6.4).

\subsection{Reduction to W-topologies}

In \cite{prdf,prdf2} we defined a ``canonical topology'' on any
unstable dp-finite field, and proved
\begin{fact}
  The canonical topology is a field topology.  If it is V-topological,
  then it is the unique definable V-topology.
\end{fact}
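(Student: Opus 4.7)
The canonical topology is presumably constructed from a monster-model analysis: over a small model $M_0$, one singles out a partial type $\mu(x)$ concentrated at $0$ playing the role of ``infinitesimals'', and declares the $M_0$-definable supersets of $\mu$ to form a neighborhood basis of $0$. The natural way to produce $\mu$ in the dp-finite unstable setting is either as a canonical $M_0$-invariant type concentrated at $0$, or as the intersection of all $M_0$-definable $U \ni 0$ for which a suitable dp-rank calculation detects that $U$ is ``small''. Granting such a presentation, the proof of the Fact splits into verifying the field-topology axioms and checking uniqueness under the V-topological hypothesis.

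For the field-topology part, my plan is to translate continuity of $+$, $\cdot$, and $x \mapsto x^{-1}$ into algebraic closure properties of $\mu$. By compactness, continuity of $+$ at $0$ amounts to $\mu + \mu \subseteq \mu$; continuity of $\cdot$ amounts to $\mu \cdot \mu \subseteq \mu$ together with $a \cdot \mu \subseteq \mu$ for every $a \in K$; and continuity of inversion at $1$ reduces to $(1+\mu)^{-1} \subseteq 1 + \mu$, which a geometric-series manipulation should recover from the previous closures. Each closure property should follow from the invariant-type characterization of $\mu$: if $\alpha, \beta \models \mu$ with $\beta$ chosen independently from $\alpha$ over $M_0$ in the appropriate sense, then $\alpha + \beta$ realizes the same type over $M_0 \alpha$ as $\beta$ does, so $\alpha + \beta \models \mu$ by invariance; multiplicative closure is parallel.

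For uniqueness, suppose the canonical topology $\tau$ is V-topological and let $\tau'$ be any other definable V-topology on $K$. I would invoke the classical dichotomy for V-topologies on a field: any two V-topologies are either comparable or independent, and in the independent case the approximation theorem produces neighborhoods $U \in \tau$ and $U' \in \tau'$ of $0$ with $U \cdot U' = K$. Translating this into the present framework should contradict a canonical property of $\mu$, for instance that the infinitesimals of any definable field topology are trapped inside $\mu$. Hence $\tau$ and $\tau'$ are comparable, and since any strict refinement of a V-topology ceases to be a V-topology, $\tau = \tau'$.

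The main obstacle I expect is the closure $\mu + \mu \subseteq \mu$ in the first part. In stable theories this comes essentially for free from stationarity of heirs, but in the unstable dp-finite setting the substitute requires genuine use of dp-rank additivity, subadditivity along generic elements, and the invariant-type machinery presumably developed in the earlier installments \cite{prdf,prdf2}. Once addition is under control, the multiplicative closure properties should be analogous, continuity of inversion is formal, and the uniqueness clause reduces to classical V-topology theory.
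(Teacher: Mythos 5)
This Fact is imported from \cite{prdf,prdf2}; the present paper gives no proof of it, so your proposal has to be measured against the construction actually carried out in those installments, and it has a genuine gap in both halves. First, the canonical topology is not built from an invariant type concentrated at $0$: the group of infinitesimals is defined combinatorially, as an intersection of difference sets $X-X$ for \emph{heavy} definable sets $X$ (heaviness being a dp-rank condition), and the central theorem of \cite{prdf,prdf2} is precisely that this intersection is closed under addition and under multiplication by elements of $K$. Your mechanism for $\mu+\mu\subseteq\mu$ --- ``$\alpha+\beta$ realizes the same type over $M_0\alpha$ as $\beta$ does, so $\alpha+\beta\models\mu$ by invariance'' --- presupposes that the generic type of the infinitesimals is additively idempotent, which is exactly the statement to be proved; in an unstable NIP field there is no stationarity or generic-independence principle that delivers this, and you concede the point by deferring it to unspecified ``machinery.'' A sketch whose central closure property is outsourced to machinery you cannot name is not a proof of the Fact.

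The uniqueness half is closer to the mark but still rests on an unproved load-bearing lemma. You correctly reduce to showing that every definable V-topology is comparable to (in fact coarser than) the canonical topology, since two distinct V-topologies are independent and hence incomparable, so comparability forces equality. But the containment you invoke --- that the infinitesimals of any definable V-topology trap the canonical infinitesimals --- is itself a substantive theorem of \cite{prdf}: one must show that for any definable V-topology, every heavy set $X$ yields a neighborhood $X-X$ of $0$, so that the V-topology is a coarsening of the canonical one. Without that input, the independence/approximation dichotomy for V-topologies produces no contradiction. In short, the skeleton of your uniqueness argument matches the actual one, but the key lemma is missing, and the field-topology half is built on a guessed construction of $\mu$ that does not match the one in \cite{prdf,prdf2}.
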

However, in \cite{prdf4}, we gave an example of an expanded field
$(K,+,\cdot,\ldots)$ of dp-rank 2, in which the canonical topology was
not a V-topology.  In \cite{prdf5} we introduced a class of ``finite
weight'' topological fields, or W-topological fields.  Field
topologies of weight 1 are exactly V-topologies.  We proved the
following:
\begin{fact}
  ~
  \begin{enumerate}
  \item The canonical topology on an unstable dp-finite field $K$ is a
    definable field topology of weight at most $\dpr(K)$.
  \item If $\tau$ is a field topology of finite weight (a W-topology),
    then there is at least one V-topological coarsening, i.e., a
    V-topology $\sigma$ that is coarser than $\tau$.
  \item If $\tau$ is a definable W-topology, then the V-topological
    coarsenings of $\tau$ are all definable.
  \item If $\tau$ is the canonical topology of an unstable dp-finite
    field $K$, then the V-topological coarsenings of $\tau$ are
    exactly the definable V-topologies on $K$.
  \end{enumerate}
\end{fact}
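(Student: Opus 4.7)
The plan is to establish the four parts in the given order, since each builds on its predecessors. Parts (1)--(3) are essentially independent structural results, while (4) combines them with the maximality properties of the canonical topology established in \cite{prdf,prdf2}.

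For part (1), I would recall from \cite{prdf,prdf2} that the canonical topology is generated by certain infinitesimal subgroups defined in terms of forking and dp-rank data on the additive group of $K$. The natural approach is to show that a basis of neighborhoods of $0$ can be written as an intersection of at most $\dpr(K)$ chains of nested definable subsets, which should be the content of having weight at most $\dpr(K)$ as defined in \cite{prdf5}. The key technical tool is the subadditivity of dp-rank together with a compactness/saturation argument: if the canonical neighborhoods required weight exceeding $\dpr(K)$, one could extract an array of ``independent'' nested chains witnessing a dp-pattern of length greater than $\dpr(K)$, contradicting the rank bound.

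For parts (2) and (3), the approach is purely topological-algebraic and does not use any model-theoretic hypothesis beyond the definition of a W-topology. Given a W-topology $\tau$ of weight $n$, one identifies the ``principal directions'' of $\tau$ --- the finitely many nested chains whose intersections give a basis of neighborhoods of $0$ --- and coarsens $\tau$ so as to retain only one such direction; the resulting topology should be of weight $1$, i.e., a V-topology, establishing (2). For (3), definability follows because the extraction of these ``directions'' from $\tau$ is a canonical first-order construction, so if $\tau$ is definable then so is each V-topological coarsening. The main obstacle in this step is verifying the V-topology axioms --- especially the multiplicative absorption $\forall U \, \exists V \, V \cdot V \subseteq U$ and the inverse-continuity away from $0$ --- for the candidate coarsening.

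Part (4) then follows by combining (1) and (3) with a maximality argument: every V-topological coarsening of the canonical topology is a definable V-topology by (3), and conversely, every definable V-topology on $K$ must coarsen the canonical topology, since the canonical topology is characterized in \cite{prdf,prdf2} as (essentially) the finest definable field topology on $K$. The hardest step, in my view, is part (1), because bounding the weight of the canonical topology by $\dpr(K)$ requires a precise correspondence between topological ``orthogonal directions'' in the neighborhood basis and model-theoretic \emph{orthogonal} contributions to dp-rank, which is the kind of dictionary one expects to be genuinely delicate to set up.
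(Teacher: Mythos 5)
This statement is a \emph{Fact} imported from \cite{prdf5} (and \cite{prdf,prdf2}); the present paper gives no proof of it, so your sketch has to be measured against the cited arguments and the machinery recapped in \S\ref{sec:machine}. There is a genuine conceptual gap at the heart of your treatment of parts (2) and (3): you model a weight-$n$ topology as being given by $n$ ``principal directions,'' i.e.\ $n$ nested chains whose intersections form a neighborhood basis, and you propose to obtain a V-topological coarsening by ``retaining one direction.'' That picture would make every W-topology an independent join of V-topologies, which is false --- it is essentially the content of the main theorem of this paper (Theorems~\ref{thm:non-loc1} and \ref{thm:non-loc2}) that such a decomposition exists only into \emph{local} components, and a local W-topology of weight $>1$ (e.g.\ the DV-topologies of \cite{prdf4}, or the topology induced by the ring $R=\{x\in\Oo_1\cap\Oo_2 : \res_1(x)=\res_2(x)\}$ of Warning~\ref{ww}) is neither a V-topology nor a single ``chain.'' The definition of weight is via cube-rank ($R$-independent sequences), not via counting chains, so your candidate coarsening is not even well defined, and the ``main obstacle'' you identify (checking the V-topology axioms) is not where the difficulty lies.

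The actual route to (2) and (3) is ring-theoretic: pass to the $\vee$-definable ring $R_\tau\subseteq K^*$ of $K$-bounded elements (Proposition~\ref{p1}); finite weight of $\tau$ gives finite weight of $R_\tau$ (Proposition~\ref{wdict-1}); by Proposition~2.12 of \cite{prdf5} the integral closure $\widetilde{R_\tau}$ is an intersection of at most $n$ pairwise incomparable valuation rings $\Oo_1\cap\cdots\cap\Oo_m$ with $m\ge 1$; each $\Oo_i$ is $\vee$-definable over $K$ because it has finite orbit under $\Aut(K^*/K)$ (Lemma~\ref{almost}), hence corresponds to a (definable) V-topological coarsening by Propositions~\ref{wdict-2} and \ref{v-coarses}. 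This gives (2) and (3) simultaneously, with no need to verify V-topology axioms by hand. For (1), the bound $\wt(\tau)\le\dpr(K)$ comes from comparing cube-rank of the infinitesimals with dp-rank, not from extracting a dp-pattern out of ``independent nested chains''; your instinct that this is the delicate dictionary is reasonable, but the dictionary is rank-theoretic, not chain-theoretic. Finally, for (4), the canonical topology is \emph{not} characterized as the finest definable field topology; the inclusion ``every definable V-topology coarsens the canonical topology'' is a theorem (Theorem~6.6 of \cite{prdf5}) proved via infinitesimals and heavy sets, and cannot be quoted as a definition.
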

As a corollary, we obtained the existence part of
Theorem~\ref{thm:v-top-cor}.  Moreover, the uniqueness of the
definable V-topology was already known in characteristic $p > 0$, by
(\cite{prdf}, Lemma~2.6 and \cite{hhj-v-top}, Proposition~3.5).  So it
remains to prove uniqueness in characteristic 0.

In \cite{prdf2} (Proposition~5.17(4)), we used multiplicative
infinitesimals to prove:
\begin{fact}
  Let $K$ be an unstable dp-finite field, viewed as a topological
  field via the canonical topology.  For every neighborhood $U \ni
  1$, the set $U^2 = \{x^2 : x \in U\}$ is a neighborhood of 1.

  Equivalently, the squaring map $K^\times \to K^\times$ is an open map.
\end{fact}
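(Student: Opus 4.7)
My plan is to translate the openness of squaring into a divisibility property of the group of multiplicative infinitesimals, and prove that divisibility using the invariant-type description of the canonical topology.

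Work in a sufficiently saturated extension $\Mm \succeq K$, and let $\mu \leq \Mm^\times$ denote the type-definable group of multiplicative infinitesimals at $1$, characterized by the property that a $K$-definable set $V \ni 1$ is a neighborhood of $1$ in the canonical topology iff $\mu \subseteq V(\Mm)$. For any neighborhood $U$ of $1$, one has $\mu \subseteq U(\Mm)$, hence $\mu^2 \subseteq U(\Mm)^2 \subseteq (U^2)(\Mm)$. Thus $U^2$ is a neighborhood of $1$ as soon as $\mu = \mu^2$, and the whole statement reduces to showing that every multiplicative infinitesimal is a square of a multiplicative infinitesimal.

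To establish $\mu = \mu^2$, I would work with the generic invariant type $p$ whose orbit generates $\mu$ as a subgroup of $\Mm^\times$. Given a generic realization $a \models p \mid \Mm$, one wants to produce a square root $b$ of $a$ with $b \in \mu$. Because the squaring map $\Mm^\times \to \Mm^\times$ is a finite \'etale cover (in characteristic $\neq 2$), and the canonical topology is constructed from generic-type data, one expects the genericity of $a$ to transport to a pre-image under squaring: a pre-image $b$ with the right invariant type exists and thus lies in $\mu$. Once $\mu$ contains a square root of every generic element, it contains a square root of every element by the group structure on $\mu$.

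The main obstacle is producing this generic square root, since Hensel's lemma is not available: the canonical topology is only known to be W-topological, not V-topological, at this point in the development. In characteristic $\neq 2$ the obstruction is Galois-theoretic (distinguishing $b$ from $-b$), and the generic invariant type $p$ must pick out a canonical sheet of the squaring map; one argues this by the symmetry of $p$ under multiplicative translation together with the two-torsion of $\Mm^\times$ being bounded. In characteristic $2$, squaring is the Frobenius on a perfect field, and the statement reduces to the preservation of $\mu$ under the inverse Frobenius on $\Mm^\times$, which in turn follows from invariance of the defining type $p$ under the field automorphism inverting the Frobenius.
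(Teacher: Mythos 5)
This Fact is not proved in the present paper at all: it is imported verbatim from \cite{prdf2}, Proposition~5.17(4), where it is established using the machinery of multiplicative infinitesimals. Your overall strategy---reduce openness of squaring to showing that the type-definable group $\mu$ of multiplicative infinitesimals satisfies $\mu=\mu^2$---is indeed the shape of that argument, and the formal reduction from ``generic elements of $\mu$ have square roots in $\mu$'' to ``all elements do'' is a standard translation argument that works. (One caveat even here: the canonical topology is characterized by the \emph{additive} infinitesimals $I_K$, and the comparison between the multiplicative group $\mu$ and $1+I_K$ is itself one of the clauses of Proposition~5.17 in \cite{prdf2}, not something you may assume for free.)

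The genuine gap is at the only hard step, and you have essentially labelled it rather than closed it. The claim that for $a\models p$ generic in $\mu$ ``a pre-image $b$ with the right invariant type exists and thus lies in $\mu$'' has no supporting mechanism: knowing that $\tp(b/K)$ is one of the two types lying over $\tp(a/K)$ under squaring says nothing about membership in $\mu$, which is a strong type-definable closeness-to-$1$ condition over $K$. There is no Hensel's lemma available (as you note, the topology is only a W-topology), so you cannot even place the square roots near $\pm1$; and even closeness to $1$ in every V-topological coarsening would not imply membership in $\mu$, since the canonical topology is strictly finer than its coarsenings. The appeals to ``the squaring map is a finite \'etale cover,'' to ``symmetry of $p$ under multiplicative translation,'' and to bounded $2$-torsion do not combine into an argument that the fiber of the squaring map over a point of $\mu$ meets $\mu$. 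The actual proof in \cite{prdf2} does not lift genericity through the squaring map; it exploits the construction of the multiplicative infinitesimals as a multiplicative stabilizer of the additive infinitesimals together with heaviness/rank arguments (for instance, that $(K^\times)^n$ is a definable finite-index, hence heavy, subgroup), which is a different and non-circular mechanism. As written, your proposal restates the difficulty rather than resolving it.
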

Thus, everything reduces to the following statement purely about
W-topologies:
\begin{lemma}[= Corollary~\ref{sq-weird}]
  Let $(K,\tau)$ be a W-topological field of characteristic 0.  If the
  squaring map $K^\times \to K^\times$ is an open map, then $\tau$ has
  a unique V-topological coarsening.
\end{lemma}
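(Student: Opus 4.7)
The plan is to argue by contradiction. Assume $(K,\tau)$ admits two distinct V-topological coarsenings $\sigma_1 \ne \sigma_2$, and aim to show that squaring fails to be open in $\tau$. The starting point is the classical fact that two distinct V-topologies on a field are independent, so the approximation theorem applies: any pair of $\sigma_1$- and $\sigma_2$-behaviors can be realized simultaneously by elements of $K$. Since $\characteristic(K) = 0$ we have $+1 \ne -1$, and these are strictly separated in both V-topologies by Hausdorffness.

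By approximation, produce $s \in K$ that is $\sigma_1$-close to $+1$ and $\sigma_2$-close to $-1$, with both approximations arbitrarily tight. Set $x := s^2$, which is then $\sigma_1$- and $\sigma_2$-close to $+1$; its only algebraic square roots in $K$ are $\pm s$, and the ``close to $+1$'' branch splits: it is $+s$ in $\sigma_1$ and $-s$ in $\sigma_2$. Now pick a $\tau$-neighborhood $U$ of $+1$ of the form $U = V_1 \cap V_2$, where each $V_i$ is a $\sigma_i$-neighborhood of $+1$ disjoint from a $\sigma_i$-neighborhood of $-1$; such a $U$ is a $\tau$-neighborhood but contains neither $+s$ (which fails the $\sigma_2$-condition) nor $-s$ (which fails the $\sigma_1$-condition). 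The openness hypothesis supplies a $\tau$-neighborhood $W \subseteq U^2$ of $+1$. If $x \in W$ for suitable $s$, then $x = u^2$ for some $u \in U$, forcing $u = \pm s \in U$, a contradiction.

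The main obstacle is to realize $x \in W$. Approximation directly places $x$ only inside $\sup(\sigma_1, \sigma_2)$-neighborhoods of $+1$, while $W$ is a general $\tau$-neighborhood and $\tau$ may be strictly finer than $\sup(\sigma_1, \sigma_2)$. The plan for bridging this gap is to invoke the finite-weight structure of $\tau$ from \cite{prdf5}: since a W-topology is determined in a controlled way by its V-topological coarsenings, one expects to reduce to the case $\tau = \sup(\sigma_1, \sigma_2)$, in which every $\tau$-neighborhood of $+1$ contains a basic open $V_1' \cap V_2'$, making $x \in W$ achievable by approximation. In spirit, the lemma says that openness of squaring is the local-trivialization condition for the $2$-to-$1$ cover $x \mapsto x^2$, and such a cover cannot be locally trivialized by a topology refining two independent V-topologies on a field of characteristic $0$.
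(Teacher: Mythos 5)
Your overall shape (two distinct V-topological coarsenings $\sigma_1\ne\sigma_2$, an element near $+1$ for $\sigma_1$ and near $-1$ for $\sigma_2$, contradiction with openness of squaring) is the same as the paper's, and you have correctly located the crux: getting the witness into a general $\tau$-neighborhood $W$, not merely into a $\sup(\sigma_1,\sigma_2)$-neighborhood. But the bridge you propose for that crux does not work, and it is precisely there that essentially all of the paper's work lives. A W-topology is \emph{not} determined by, and in general is strictly finer than, the supremum of its V-topological coarsenings: for example, a local W-topology of weight $2$ (a DV-topology) has a unique V-topological coarsening yet is strictly finer than it, so no reduction to ``$\tau=\sup(\sigma_1,\sigma_2)$'' is available even when $\tau$ has exactly two V-topological coarsenings. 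A $\tau$-neighborhood $W$ of $1$ need not contain any set of the form $V_1'\cap V_2'$ with $V_i'\in\sigma_i$, and the approximation theorem for independent V-topologies gives you no control over $x$ relative to the finer data.

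What the paper actually proves (Theorems~\ref{thm:non-loc1} and \ref{thm:non-loc2}) is that $\tau$ is an independent sum of its \emph{local components} $\tau_1,\ldots,\tau_n$ --- these are W-topologies, each strictly between $\tau$ and one V-topological coarsening --- and that $n$ equals the number of V-topological coarsenings. One then chooses $x$ close to $+1$ in $\tau_1$ and close to $-1$ in $\tau_2$ (closeness in the local components, not just in the V-topologies); the ``generate'' half of the independent sum guarantees $x^2$ lands in an arbitrary $\tau$-neighborhood of $1$, and the ``independence'' half guarantees such $x$ exists. Neither half follows from the classical independence of distinct V-topologies; the independence of the local components rests on the $\vee$-definable ring machinery, the scrambling/density arguments of \S\ref{sec:tech}, and Theorem~7.16 of \cite{prdf5}. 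So your proposal is a correct reduction of the corollary to the decomposition theorem, but it does not contain a proof of that theorem, and the heuristic you offer in its place (``a W-topology is determined in a controlled way by its V-topological coarsenings'') is false.
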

This in turn comes from the following decomposition theorem:
\begin{theorem}[= Theorems~\ref{thm:non-loc1} and \ref{thm:non-loc2}] \label{thm-main}
  Let $(K,\tau)$ be a W-topological field.  Then there exist
  W-topological coarsenings $\tau_1, \ldots, \tau_n$ such that
  \begin{itemize}
  \item The $\tau_i$ are jointly independent and generate $\tau$.  In
    other words, the diagonal embedding
    \[ (K,\tau) \hookrightarrow (K,\tau_1) \times \cdots \times (K,\tau_n)\]
    is a homeomorphism onto its image, and the image is dense.
  \item Each $\tau_i$ has a unique V-topological coarsening, and this
    establishes a bijection between $\{\tau_1,\ldots,\tau_n\}$ and
    the set of V-topological coarsenings of $\tau$.
  \end{itemize}
\end{theorem}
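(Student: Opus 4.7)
The plan is to index $\tau_1, \ldots, \tau_n$ by the V-topological coarsenings of $\tau$, with each $\tau_i$ a kind of ``localization'' of $\tau$ at the corresponding V-topology $\sigma_i$. The first step is to show that $\tau$ has only finitely many V-topological coarsenings. Two distinct V-topologies on a field are automatically independent (a classical fact in the theory of V-topologies), so the supremum of any $k$ distinct V-topological coarsenings of $\tau$ is a W-topology of weight at least $k$. Since that supremum is coarser than $\tau$ and $\tau$ has finite weight, there can be only finitely many V-topological coarsenings; call them $\sigma_1,\ldots,\sigma_n$, where $n \le \wt(\tau)$.

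Next I would construct $\tau_i$. The natural candidate is the coarsest W-topological coarsening of $\tau$ whose set of V-topological coarsenings is exactly $\{\sigma_i\}$. To know this is well-defined I would use the structure theory of W-topologies from \cite{prdf5}: one needs that the W-topological coarsenings of $\tau$ form a sufficiently well-behaved lattice, closed under an appropriate meet, so that we can intersect away the unwanted V-topological coarsenings $\sigma_j$ for $j\ne i$. The intuition is that $\tau_i$ retains the ``local'' behaviour of $\tau$ at $\sigma_i$ and forgets its behaviour at the other $\sigma_j$. Uniqueness of the V-topological coarsening of $\tau_i$ would then follow by construction, giving the claimed bijection with V-topological coarsenings of $\tau$.

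With the $\tau_i$ in hand, the two decomposition claims have complementary character. Density of the diagonal image in $\prod_i (K,\tau_i)$ (the joint independence) should reduce to the classical approximation theorem for the independent V-topologies $\sigma_1,\ldots,\sigma_n$, lifted to the $\tau_i$ level using that $\tau_i$-neighborhoods of $0$ are ``supported at $\sigma_i$'' and arbitrarily large in the $\sigma_j$-topology for $j\ne i$. The other half, that $\tau = \sup_i \tau_i$ (so the diagonal is a homeomorphism onto its image), amounts to showing every $\tau$-neighborhood of $0$ contains an intersection $U_1\cap\cdots\cap U_n$ with $U_i$ a $\tau_i$-neighborhood of $0$.

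I expect this last point to be the main obstacle: it is a W-topological analogue of the Chinese-remainder-style decomposition for independent V-topologies, and in the V-topological case it is essentially the approximation theorem itself, whereas here we must decompose an a priori mysterious finite-weight topology into pieces of width one V-topological coarsening each. This forces one to exploit the internal structure of $\tau$ provided by \cite{prdf5} (e.g.\ a weight-additivity statement and some form of ``primary decomposition'' for the lattice of W-topological coarsenings) rather than purely formal manipulations of neighborhood filters. Once this decomposition is established, the bijectivity statement and the continuity/openness of the diagonal follow directly.
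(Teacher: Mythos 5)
There is a genuine gap, and it starts with the construction of the $\tau_i$. You propose to take $\tau_i$ to be the \emph{coarsest} W-topological coarsening of $\tau$ whose set of V-topological coarsenings is $\{\sigma_i\}$. But $\sigma_i$ itself belongs to that collection (a V-topology's only V-topological coarsening is itself), and every member of the collection is by definition finer than $\sigma_i$; so your $\tau_i$ is just $\sigma_i$, and your theorem would assert that every W-topology is the independent sum of its V-topological coarsenings, i.e.\ is induced by a multi-valuation ring. That is false: the paper's motivating example (the DV-topologies of \cite{prdf4}, or the canonical topology of the rank-2 field mentioned in the introduction) is a local W-topology of weight $2$ with a unique V-topological coarsening $\sigma_1$ that is strictly coarser than $\tau$, so with $n=1$ the diagonal map $(K,\tau)\to(K,\sigma_1)$ is not a homeomorphism onto its image. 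The correct components sit at the opposite extreme: in the paper they are the \emph{local components} of $\tau$, obtained by passing to the $\vee$-definable ring $R=R_\tau$ in an ``ultrapower'' and localizing at its maximal ideals; these are the finest local coarsenings of $\tau$, not the coarsest ones with prescribed V-coarsenings.

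Relatedly, you dismiss the hardest step. You say uniqueness of the V-topological coarsening of each $\tau_i$ ``follows by construction,'' and you locate the main difficulty in showing $\tau=\sup_i\tau_i$. In the paper it is the other way around: once the $\tau_i$ are the key localizations, the generation statement is comparatively formal ($R=\bigcap_i R_i$ by commutative algebra, transferred back to $K$ by local sentences), and joint independence follows from the criterion of \cite{prdf5} (coarsenings of a common W-topology with no common V-topological coarsening are independent) rather than from lifting the approximation theorem by hand. The real content is Theorem~\ref{local-thm}: a \emph{local} W-topology has a unique V-topological coarsening, equivalently the integral closure of a local $R_\tau$ is a valuation ring. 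This cannot follow from lattice-theoretic or purely commutative-algebraic considerations (Warning~\ref{ww} gives a local $W_2$-ring whose integral closure is not a valuation ring); it requires the scrambling construction and the density of $R$ in the Jacobson radical of $\widetilde{R}$ (Lemmas~\ref{re-slide}--\ref{incomparable}). Your proposal contains no substitute for this argument.
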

We discuss the strategy for proving Theorem~\ref{thm-main} in
\S\ref{sec:where}, after introducing some machinery in
\S\ref{sec:machine}.

\subsection{Conventions}\label{sec:conventions}
This paper is a continuation of \cite{prdf5}, and we use its notions
of ``weight,'' and $W_n$-sets, -rings, and -topologies.  A
\emph{W-topology} is a topology of finite weight, i.e., a
$W_n$-topology for some $n$.

All rings will be commutative and unital.  If $R$ is a ring, then
$R^\times$ will denote the group of units of $R$, and $\Jac(R)$ will
denote the Jacobson radical.  If $R$ is an integral domain, then
$\Frac(R)$ will denote its field of fractions, and $\widetilde{R}$
will denote the integral closure of $R$ in
$\Frac(R)$.  We will tend to use the letter $\Oo$ for valuation rings,
and $\mm$ for maximal ideals of valuation rings.

In a list, $\ldots,\widehat{x},\ldots$ means ``omit $x$ from the
list.''  If $R$ is a ring and $x_1, \ldots, x_n$ are elements of an
$R$-module $M$, then the $x_i$ are \emph{$R$-independent} if no $x_i$
lies in the $R$-module generated by
$\{x_1,\ldots,\widehat{x_i},\ldots,x_n\}$.  Often $M = R$ or $M =
\Frac(R)$.  The ``cube-rank'' of (\cite{prdf5}, \S 2.1) is the maximum
length of an $R$-independent sequence in $M$.

If $K$ is a field and $A, B \subseteq K$, we will say that $A$ is
\emph{embeddable into} $B$ if there is $c \in K^\times$ such that $c
\cdot A \subseteq B$.  We say that $A$ and $B$ are
\emph{co-embeddable} if $A$ is embeddable into $B$ and $B$ is
embeddable into $A$.

All topologies will be Hausdorff non-discrete locally bounded ring
topologies on fields.  We think of a topology as the filter of
neighborhoods of 0, rather than the set of open sets.  If $\tau$ is a
topology, then $\tau^\perp$ will denote the ideal of bounded sets.  By
assumption, $\tau$ always intersects $\tau^\perp$.

We will say that $U$ ``defines'' or ``induces'' a topology $\tau$ if
$U$ is a locally bounded neighborhood in $\tau$, i.e., $U \in \tau
\cap \tau^\perp$.  In this case, $\{c U : c \in K^\times\}$ is a
filter basis for $\tau$, as well as an ideal basis for $\tau^\perp$
(Lemma~2.1(e) of \cite{PZ}).

If $R$ is a proper subring of $K$ and $\Frac(R) = K$, then $R$ induces
a topology $\tau_R$.  The non-zero ideals of $R$ form a neighborhood
basis, but we will predominantly use the basis $\{c R : c \in
K^\times\}$.

The terms ``local class,'' ``local sentence,'' and ``local
equivalence'' will be used as in \cite{PZ}.  In particular, a local
class is a class of topological fields defined by a set of local
sentences, and two topological fields are locally equivalent if they
satisfy the same local sentences.  Local sentences allow two types of
variables:
\begin{itemize}
\item Lower-case variables $a,b,c,\ldots,x,y,z$, which range over the
  field sort.
\item Upper-case variables $U, V, W, \ldots$, which range over $\tau$.
\end{itemize}
Quantification over $\tau$ is limited:
\begin{itemize}
\item Universal quantification $\forall U : \phi(U)$ is allowed only
  if $U$ occurs positively in $\phi(U)$.
\item Existential quantification $\exists U : \phi(U)$ is allowed only
  if $U$ occurs negatively in $\phi(U)$.
\end{itemize}
Because of this constraint, local sentences can be evaluated on a
filter basis for $\tau$: if $\tau_0$ is a filter basis for $\tau$, and
$\psi$ is a local sentence, then
\[ (K,\tau) \models \psi \iff (K,\tau_0) \models \psi.\]
  
Let $K$ be a field.  Consider the expansion of $(K,+,\cdot)$ by all
unary predicates.  Let $\All_K$ be the theory of the resulting object.
Henceforth, an ``ultrapower'' of $K$ will mean a monster model of
$\All_K$.  If $K^*$ is an ``ultrapower'' of $K$, and $U \subseteq K$,
then $U^*$ will denote the corresponding subset of $K^*$.  In the
structure $K^*$, the $K$-definable subsets of $K^*$ are exactly the
sets $U^*$.  A $\vee$-definable set will be a complement of a
type-definable set.  Type-definable and $\vee$-definable sets will
always be defined over small subsets of the ``ultrapower.''

\section{Topologies and $\vee$-definable rings} \label{sec:machine}

Fix a field $K$ and an ``ultrapower'' $K^*$.  We review the (easy)
dictionary between locally bounded ring topologies on $K$ and certain
$\vee$-definable subrings of $K^*$.
\begin{proposition}\label{p1}
  For every topology $\tau$ on $K$, there is a ring $R = R_\tau \subseteq K^*$ such
  that
  \begin{enumerate}
  \item\label{p1e1} $R$ is a filtered union $\bigcup_{B \in \tau^\perp} B^*$,
    i.e., the union of $B^*$ as $B$ ranges over $\tau$-bounded subsets
    of $K$.
  \item $R$ is $\vee$-definable over $K$.
  \item $R$ is a proper $K$-subalgebra of $K^*$.
  \item\label{p1e4} If $\tau$ and $\tau'$ are two topologies, then
    $\tau$ is coarser than $\tau'$ if and only if $R_{\tau} \supseteq
    R_{\tau'}$.
  \end{enumerate}
\end{proposition}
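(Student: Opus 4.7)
The plan is to take $R_\tau := \bigcup_{B \in \tau^\perp} B^*$ as in (\ref{p1e1}). Fix a locally bounded neighborhood $U \in \tau \cap \tau^\perp$; then $\{cU : c \in K^\times\}$ is a filter basis for $\tau$ and an ideal basis for $\tau^\perp$, so one can equivalently write $R_\tau = \bigcup_{c \in K^\times} (cU)^*$. In this form $R_\tau$ is visibly $\vee$-definable over $K$ (the union is indexed by $K^\times$, a small set, and each $(cU)^*$ is $K$-definable), settling (2). Part (\ref{p1e1}) is then a matter of checking that $\tau^\perp$ is closed under finite union, so that the displayed union is genuinely a filtered one.

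I would verify (3) in three pieces. Closure of $R_\tau$ under $+$ and $\cdot$ reduces to the standard fact that $\tau^\perp$ is closed under sum and product of sets, which follows by the usual two-step application of the definition of bounded plus continuity of the ring operations at $0$. Every element of $K$ lies in $R_\tau$ since singletons are bounded, so $R_\tau$ is a $K$-subalgebra of $K^*$. For properness, non-discreteness and Hausdorffness imply that $K$ itself is not $\tau$-bounded; consequently for any finite $c_1,\ldots,c_n \in K^\times$ the set $\bigcup_i c_i U$ is a proper subset of $K$, and saturation of $K^*$ realises the partial type $\{x \notin cU : c \in K^\times\}$ over $K$, producing an element of $K^* \setminus R_\tau$.

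For part (\ref{p1e4}), the forward direction uses that $U \in \tau \subseteq \tau'$ is a $\tau'$-neighborhood, hence contains some basic $c_1 U'$ with $U' \in \tau' \cap \tau'^\perp$; rearranging gives $U' \subseteq c_1^{-1} U$, so any $\tau'$-bounded $B \subseteq cU'$ satisfies $B \subseteq (c/c_1) U$, showing $\tau'^\perp \subseteq \tau^\perp$ and hence $R_\tau \supseteq R_{\tau'}$. For the converse, assume $R_{\tau'} \subseteq R_\tau$. Then $(U')^* \subseteq R_\tau = \bigcup_{c \in K^\times}(cU)^*$, so saturation of the monster $K^*$ yields a finite subcover $U' \subseteq \bigcup_{i=1}^n c_i U$, and since $\tau^\perp$ is closed under finite unions this collapses to $U' \subseteq cU$ for a single $c$; then $c^{-1} U' \subseteq U$ makes $U$ a $\tau'$-neighborhood, so $\tau \subseteq \tau'$.

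The main place where care is needed is the compactness step in the converse of (\ref{p1e4}): one must check that the cover of the $K$-definable set $(U')^*$ by the $K$-definable sets $(cU)^*$ really is indexed by a small parameter set (namely $K^\times$), so that saturation of the monster model $K^*$ actually applies. Every other step is a routine verification from the definitions of ``bounded,'' ``ring topology,'' and ``$\vee$-definable.''
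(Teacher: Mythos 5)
Your proposal is correct and follows essentially the same route as the paper: define $R_\tau$ as the filtered union over $\tau^\perp$, verify the subalgebra and properness claims from the standard closure properties of bounded sets plus saturation, and prove (4) by passing through the equivalence with $(\tau')^\perp \subseteq \tau^\perp$ (using compactness for the converse). Your use of the concrete basis $\{cU : c \in K^\times\}$ just makes explicit the duality between $\tau$ and $\tau^\perp$ that the paper invokes abstractly via Lemma~2.1 of \cite{PZ}.
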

\begin{proof}
  Define $R$ as in point \ref{p1e1}.  Then $R$ is trivially
  $\vee$-definable over $K$.  For the other points, we use Lemma~2.1
  in \cite{PZ}.  If $B_1, B_2$ are two bounded sets, then $B_1 \cup
  B_2$ is bounded.  Therefore the union is filtered.  Any finite
  subset of $K$ is bounded.  Therefore $K \subseteq R$.  If $B_1, B_2$
  are bounded, then $B_1 - B_2$ and $B_1 \cdot B_2$ are bounded.
  Therefore $R$ is a $K$-subalgebra of $K^*$.  The set $K$ is not
  itself bounded.  By saturation of $K^*$, it follows that $R \ne
  K^*$.

  Lastly, for point \ref{p1e4} we prove the following chain of
  equivalent statements:
  \begin{enumerate}
  \item\label{he1} $\tau$ is coarser than $\tau'$, i.e., every
    $\tau$-open is a $\tau'$-open.
  \item\label{he2} $\tau \subseteq \tau'$, i.e., every
    $\tau$-neighborhood of 0 is a $\tau'$-neighborhood of 0.
  \item\label{he3} $(\tau')^\perp \subseteq \tau^\perp$, i.e., every
    $\tau'$-bounded set is $\tau$-bounded.
  \item\label{he4} $R_{\tau'} \subseteq R_\tau$.
  \end{enumerate}
  The equivalence (\ref{he1})$\iff$(\ref{he2}) is easy.  The
  equivalence (\ref{he2})$\iff$(\ref{he3}) holds because $\tau$ and
  $\tau^\perp$ determine each other:
  \begin{itemize}
  \item $B \in \tau^\perp$ if and only if $B$ is embeddable into every
    $U \in \tau$, in the sense that $\exists c \in K^\times : cB
    \subseteq U$.  This is Lemma~2.1(d) in \cite{PZ}.
  \item $U \in \tau$ if and only if every $B \in \tau^\perp$ is
    embeddable into $U$.  This holds by local boundedness of the
    topology.
  \end{itemize}
  Finally, the equivalence (\ref{he3})$\iff$(\ref{he4}) follows by
  compactness (and the fact that the unions are filtered).
\end{proof}
So $R_\tau$ is the ring of ``$K$-bounded'' elements in $K^*$.  One
could also define the group of ``$K$-infinitesimals'' as the
intersection $\bigcap_{U \in \tau} U^*$.  However, the ring of
$K$-bounded elements is more useful for our purposes.

\begin{proposition}\label{p2}
  For every topology $\tau$ on $K$, there is a topology $\tau^*$ on
  $K^*$ such that
  \begin{enumerate}
  \item\label{p2e1} $\tau^*$ is defined by the ring $R = R_\tau$ of
    Proposition~\ref{p1}.
  \item\label{p2e2} If $U$ is any set defining $\tau$, then $U^*$ defines
    $\tau^*$.
  \item\label{p2e3} If $\tau_1, \ldots, \tau_n$ are topologies on $K$, and
    $\tau_1^*, \ldots, \tau_n^*$ are the corresponding topologies on
    $K^*$, then there is a ``local equivalence'' in the sense of
    Prestel and Ziegler:
    \begin{equation*}
      (K^*,\tau_1^*,\ldots,\tau_n^*) \equiv (K,\tau_1,\ldots,\tau_n).
    \end{equation*}
  \end{enumerate}
\end{proposition}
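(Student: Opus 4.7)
The plan is to pick any set $U$ defining $\tau$ and use $U^*$ to build $\tau^*$ on $K^*$, then verify the three parts by combining transfer of local sentences with a saturation argument.

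The assertions required on $U$ — that $U$ is a locally bounded neighborhood of $0$ in a Hausdorff, non-discrete, locally bounded ring topology — can be axiomatized as a list of local sentences in the sense of Prestel--Ziegler, each a first-order formula in the language $(+, \cdot, U)$. Typical axioms are $\forall c\; \exists c' : c'U + c'U \subseteq cU$, $\forall c\; \exists c' : (c'U)(c'U) \subseteq cU$, continuity of inversion, Hausdorffness $\forall x \neq 0\; \exists c : x \notin cU$, and non-discreteness $\forall c\; \exists x \neq 0 : x \in cU$. Since these hold in $(K, +, \cdot, U)$ by hypothesis, they hold in $(K^*, +, \cdot, U^*)$ by the ultrapower construction. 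Consequently, $\{cU^* : c \in (K^*)^\times\}$ is a filter basis for a Hausdorff, non-discrete, locally bounded ring topology $\tau^*$ on $K^*$ defined by $U^*$, which settles part~(\ref{p2e2}) for the chosen $U$. Independence of the choice follows because for any $V$ also defining $\tau$ the mutual refinement statements $\forall c\; \exists c' : c'V \subseteq cU$ and the symmetric version are local sentences true in $(K,U,V)$ and transfer to $(K^*, U^*, V^*)$.

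For part~(\ref{p2e1}), set $R = R_\tau = \bigcup_{d \in K^\times} dU^*$. The containment $U^* \subseteq R$ makes $R$ a $\tau^*$-neighborhood of $0$. For $\tau^*$-boundedness of $R$ we must show: given $c_0 \in (K^*)^\times$, there is $a \in (K^*)^\times$ with $aR \subseteq c_0 U^*$, i.e.\ $(ad/c_0) U^* \subseteq U^*$ for every $d \in K^\times$. The partial type $\{a \neq 0\} \cup \{(ad/c_0) U^* \subseteq U^* : d \in K^\times\}$ in the variable $a$ is finitely satisfiable over $K \cup \{c_0\}$: for finitely many $d_1, \ldots, d_n \in K^\times$, the $\tau$-neighborhood $\bigcap_i d_i^{-1} U$ of $0$ contains $cU$ for some nonzero $c \in K^\times$, so $cd_i U \subseteq U$ for every $i$, and $a = c c_0$ witnesses the finite subtype. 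Saturation of $K^*$ produces the full realization, showing that $R \in \tau^* \cap (\tau^*)^\perp$, so $R$ defines $\tau^*$.

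Finally, part~(\ref{p2e3}) follows by a transfer argument. A local sentence in $(\tau_1, \ldots, \tau_n)$, subject to the positivity/negativity constraints, can be recoded as a first-order sentence in the language $(+, \cdot, U_1, \ldots, U_n)$ by restricting $\forall U_i$ to $\forall c \in K^\times$ and $\exists U_i$ to $\exists c \in K^\times$ over the filter basis $\{cU_i\}$; the positivity/negativity constraint is precisely what makes this recoding preserve truth. Since $(K,+,\cdot, U_1, \ldots, U_n) \equiv (K^*, +, \cdot, U_1^*, \ldots, U_n^*)$ by the ultrapower construction, the local equivalence follows. The main obstacle in the whole argument is the boundedness verification for $R$: although $U^*$ manifestly defines a topology on $K^*$, the ring $R$ is only a $K^\times$-indexed filtered union, so upgrading this to a $(K^*)^\times$-indexed filter basis for $(\tau^*)^\perp$ forces one to use saturation of $K^*$ to find a single ``small'' $a \in K^*$ absorbing all the $K$-scalars simultaneously.
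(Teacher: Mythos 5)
Your proof is correct and takes essentially the same route as the paper: transfer the first-order properties of $U$ to $U^*$ to obtain $\tau^*$ and the local equivalence (the paper cites Lemma~2.3 and Corollary~2.4 of \cite{PZ} where you spell out the recoding), and use saturation to show that $R_\tau$ is co-embeddable with $U^*$ and hence also defines $\tau^*$.
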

\begin{proof}
  Fix any $U$ defining $\tau$.  Then $U$ is a bounded neighborhood of
  0, so $U^* \subseteq R_\tau$.  For every bounded set $B \in
  \tau^\perp$, there is $c \in K^\times$ such that $cB \subseteq U$.
  By saturation, there is $c \in (K^*)^\times$ such that $cB^*
  \subseteq U^*$ for all $B \in \tau^\perp$.  Thus $cR_\tau \subseteq
  U^*$.  So $R_\tau$ and $U$ are co-embeddable.

  By the proof of Lemma~2.3 in \cite{PZ}, $U^*$ defines a topology
  $\tau^*$ on $K^*$, with $(K^*,\tau^*) \equiv (K,\tau)$.  By
  co-embeddability of $R_\tau$ and $U$, the topology $\tau^*$ is also
  defined by $R_\tau$.  In particular, $\tau^*$ is independent of the
  choice of $U$.

  Now suppose $\tau_1, \ldots, \tau_n$ are fixed topologies on $K$.
  For each $i$, choose $U_i \subseteq K$ defining $\tau_i$.  Then
  there is an elementary equivalence
  \[ (K^*,+,\cdot,U_1^*,\ldots,U_n^*) \equiv (K,+,\cdot,U_1,\ldots,U_n).\]
  By Corollary~2.4 in \cite{PZ}, this implies the desired local
  equivalence, as each $U_i^*$ defines $\tau_i^*$.
\end{proof}

\begin{proposition}\label{wdict-1}
  Let $\tau$ be a topology on $K$, and let $R_\tau$ be the
  corresponding $\vee$-definable ring.
  \begin{enumerate}
  \item $\tau$ is a $W_n$-topology if and only if $R_\tau$ is a
    $W_n$-ring.
  \item In particular, the weight of $\tau$ equals the weight of
    $R_\tau$.  (The weight may be infinite.)
  \end{enumerate}
\end{proposition}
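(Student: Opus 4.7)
The plan is to transfer the question from $(K,\tau)$ to the topological field $(K^*,\tau^*)$, where $\tau^*$ is literally defined by the subring $R_\tau$, and then match the topological and algebraic definitions of being $W_n$ from \cite{prdf5}. The bridge is Proposition~\ref{p2}: on the one hand, the local equivalence $(K,\tau) \equiv (K^*,\tau^*)$ transfers topological conditions; on the other hand, $R_\tau$ itself is a defining neighborhood of $\tau^*$, so the ring and the topology can be compared directly inside $(K^*,\tau^*)$.

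First I would check that being a $W_n$-topology is preserved under local equivalence. Unpacking the definition from \cite{prdf5}, the failure of the $W_n$-condition for $\tau$ is witnessed by a defining neighborhood $U$, finitely many field elements $c_0,\ldots,c_n \in K^\times$, and bounded sets certifying certain non-containments among the dilates $c_i U$. These witnesses can be packaged into a local sentence in the sense of Prestel--Ziegler, so Proposition~\ref{p2}(\ref{p2e3}) gives at once that $\tau$ is $W_n$ iff $\tau^*$ is $W_n$, and similarly that the two weights agree.

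Second I would match the $W_n$-topology condition on $\tau^*$, as witnessed by the distinguished defining neighborhood $R_\tau$, with the $W_n$-ring condition on $R_\tau$. Recalling the ``cube-rank'' formulation mentioned in \S\ref{sec:conventions}, being a $W_n$-ring in $K^*$ means that any $R_\tau$-independent sequence in $K^*$ has length at most $n$. But $c_0,\ldots,c_n$ are $R_\tau$-independent precisely when no dilate $c_i R_\tau$ is contained in $\sum_{j \ne i} c_j R_\tau$, which is exactly the non-containment relation among translates of the defining neighborhood $R_\tau$ appearing in the $W_n$-topology definition applied to $\tau^*$. So in $(K^*,\tau^*)$ the two conditions coincide tautologically once one writes them out, and the weights match on the nose.

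The main potential obstacle is purely bookkeeping: confirming that the definition of $W_n$-topology in \cite{prdf5}, when specialized to a defining neighborhood which is itself a subring, really reduces to the definition of $W_n$-ring verbatim. I expect no surprises here, since the two notions in \cite{prdf5} were presumably set up precisely so that this dictionary is clean; once that is verified, both halves of the proposition follow, including the infinite-weight case by taking the supremum over $n$.
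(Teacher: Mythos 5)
Your proposal is correct and follows essentially the same route as the paper: both pass to $(K^*,\tau^*)$ via Proposition~\ref{p2}, use local equivalence on the topology side, and exploit that $R_\tau$ is itself a bounded defining neighborhood of $\tau^*$ to compare the ring and topological $W_n$-conditions. The only point to tighten is that the match in your second step is not quite ``tautological'': the $W_n$-topology condition asks for \emph{some} bounded $W_n$-neighborhood, so you need the invariance of the $W_n$-set property under co-embeddability (equivalently, Proposition~3.6 of \cite{prdf5}, which the paper cites) to pass between an arbitrary witness and the distinguished one $R_\tau$.
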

\begin{proof}
  First suppose $\tau$ is a $W_n$-topology.  By definition
  (Definition~3.3 in \cite{prdf5}), there is a bounded $W_n$-set $U
  \subseteq K$.  Then $U^*$ is a $W_n$-set in $K^*$, and $U^*
  \subseteq R_\tau$, implying that $R_\tau$ is a $W_n$-set and a
  $W_n$-ring.

  Conversely, if $R_\tau$ is a $W_n$-ring, then it defines a
  $W_n$-topology by Proposition~3.6 in \cite{prdf5}.  Therefore,
  $\tau^*$ is a $W_n$-topology.  By the local equivalence $(K,\tau)
  \equiv (K^*,\tau^*)$, it follows that $\tau$ is a $W_n$-topology.

  This proves the first point.  Then we know that $\wt(\tau) \le n
  \iff \wt(R_\tau) \le n$ for all $n \in \Nn$, implying that
  $\wt(\tau) = \wt(R_\tau)$.
\end{proof}

\begin{lemma}\label{duh}
  Let $R_1, R_2$ be two $K$-subalgebras of $K^*$, both
  $\vee$-definable over $K$.  If $R_1$ and $R_2$ are co-embeddable,
  then $R_1 = R_2$.  More generally, if $R_1$ is embeddable into
  $R_2$, then $R_1 \subseteq R_2$.
\end{lemma}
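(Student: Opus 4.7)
My plan is to prove the more general second statement; the co-embeddable case then follows by applying it in both directions. So fix $c \in (K^*)^\times$ with $c R_1 \subseteq R_2$, and aim for $R_1 \subseteq R_2$. Since the language of $\All_K$ contains a unary predicate for every subset of $K$, each $K$-definable subset of $K^*$ (in one variable) has the form $A^*$ for some $A \subseteq K$, so I can write $R_1 = \bigcup_\alpha A_\alpha^*$ and $R_2 = \bigcup_\beta B_\beta^*$ with $A_\alpha, B_\beta \subseteq K$, taking both families closed under finite unions so that the unions are filtered. By saturation of $K^*$, for each $\alpha$ there exists $\beta$ with $c \cdot A_\alpha^* \subseteq B_\beta^*$.

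The main obstacle is that the scaling element $c$ lives in $(K^*)^\times$, not in $K^\times$, so we cannot directly cancel it using that $R_2$ is a $K$-algebra. The key step is to descend $c$ to $K$ via elementary equivalence: the sentence
\[ \exists y \, \bigl( y \ne 0 \wedge \forall x \, (A_\alpha(x) \to B_\beta(yx)) \bigr) \]
is first-order in the language of $\All_K$ (using only the unary predicates associated to $A_\alpha$ and $B_\beta$), and it holds in $K^*$, witnessed by $c$. Since $K \equiv K^*$, it also holds in $K$, producing $c_0 \in K^\times$ with $c_0 \cdot A_\alpha \subseteq B_\beta$ in $K$. Lifting this back to $K^*$ via $K \preceq K^*$ yields $c_0 \cdot A_\alpha^* \subseteq B_\beta^*$.

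Now $c_0 \in K^\times$ and $R_2$ is a $K$-algebra, so $c_0^{-1} \in K \subseteq R_2$, and therefore
\[ A_\alpha^* \subseteq c_0^{-1} B_\beta^* \subseteq c_0^{-1} R_2 \subseteq R_2. \]
Since $\alpha$ was arbitrary, $R_1 = \bigcup_\alpha A_\alpha^* \subseteq R_2$, as required.
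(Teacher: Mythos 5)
Your proposal is correct and follows essentially the same route as the paper: decompose $R_1$ and $R_2$ as filtered unions of $K$-definable sets, use saturation to capture $cA_\alpha^*$ inside a single $B_\beta^*$, and then use $K \preceq K^*$ to replace the scalar $c \in (K^*)^\times$ by one from $K^\times$, which can be absorbed into the $K$-algebra $R_2$. The only difference is that you spell out the elementarity step with an explicit first-order sentence, which the paper leaves implicit.
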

\begin{proof}
  Each $R_i$ is a filtered union of its $K$-definable sets.  Suppose
  $cR_1 \subseteq R_2$ for some $c \in (K^*)^\times$.  Then for every
  $K$-definable subset $U \subseteq R_1$, we have $cU \subseteq R_2$.
  By saturation, there must be some $K$-definable subset $V \subseteq
  R_2$ such that $cU \subseteq V$.  The scalar $c$ is from
  $(K^*)^\times$, but $U$ and $V$ are $K$-definable, and $K \preceq
  K^*$.  Therefore we can find $e \in K^\times$ such that $eU
  \subseteq V$.  Then $U \subseteq e^{-1}V \subseteq e^{-1}R_2
  \subseteq R_2$, because $e^{-1} \in K \subseteq R_2$.  As $U$ was
  arbitrary, $R_1 \subseteq R_2$.
\end{proof}

\begin{proposition}\label{wdict-2}
  Let $R$ be a subring of $K^*$ that is $\vee$-definable over $K$, and
  satisfies $K \subseteq R \subsetneq \Frac(R) = K^*$.
  \begin{enumerate}
  \item $R$ is of the form $R_\tau$ for some topology on $K$ if and
    only if $R$ is co-embeddable with a definable set.
  \item $R$ is of the form $R_\tau$ for some $W_n$-topology on $K$ if
    and only if $R$ is a $W_n$-ring.
  \item $R$ is of the form $R_\tau$ for some V-topology on $K$ if and
    only if $R$ is a valuation ring.
  \end{enumerate}
\end{proposition}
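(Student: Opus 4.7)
My plan is to prove parts (1), (2), (3) in sequence, with part~(1) doing the main work and parts~(2) and~(3) following from it via Proposition~\ref{wdict-1} and machinery from \cite{prdf5}.

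For the forward direction of part~(1): if $R = R_\tau$ and $U \subseteq K$ defines $\tau$, then the proof of Proposition~\ref{p2} has already shown that $R_\tau$ and $U^*$ are co-embeddable, so $R$ is co-embeddable with the definable set $U^*$. For the reverse direction, suppose $R$ is co-embeddable with a $K$-definable set $V^*$. I will argue that $V$ itself defines a locally bounded ring topology $\tau$ on $K$ and that $R_\tau = R$. The first claim reduces to verifying the standard Prestel--Ziegler-style axioms (cf.\ Lemma~2.1 of \cite{PZ}) for $V$: that $V - V$ and $V \cdot V$ are each $K$-scalar embeddable in $V$, that every $x \in K$ lies in some $cV$ with $c \in K^\times$, and that $V$ is not embeddable in a single point. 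Each axiom is a local sentence; its $K^*$-analogue (for $V^*$ with $(K^*)^\times$-scalars) follows from the fact that $R$ is a proper $K$-subalgebra via co-embeddability --- for instance, $R - R \subseteq R$ combined with $R \subseteq cV^*$ yields $V^* - V^* \subseteq cV^*$ for suitable $c \in (K^*)^\times$. The scalar can then be pulled back to $K^\times$ by the saturation-plus-elementarity argument underlying Lemma~\ref{duh}. Once $V$ defines $\tau$, Proposition~\ref{p1} exhibits $R_\tau$ as a $\vee$-definable $K$-subalgebra co-embeddable with $V^*$, so Lemma~\ref{duh} forces $R_\tau = R$.

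Part~(2) then follows immediately: the forward direction is Proposition~\ref{wdict-1}, and for the reverse, a $W_n$-ring $R$ is (by Proposition~3.6 of \cite{prdf5}) generated by $K^\times$-translates of a definable $W_n$-set, hence co-embeddable with a definable set; part~(1) gives $R = R_\tau$, and Proposition~\ref{wdict-1} identifies $\tau$ as a $W_n$-topology. Part~(3) is the $n = 1$ case of part~(2), using the identification of V-topologies with $W_1$-topologies noted in the introduction together with the corresponding characterization from \cite{prdf5} of $W_1$-rings (among $\vee$-definable $K$-subalgebras of $K^*$) as valuation rings.

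The main obstacle is the reverse direction of part~(1): verifying each Prestel--Ziegler axiom for $V$. The verification is routine but requires repeating the same kind of transfer --- deducing the axiom for $V^*$ in $K^*$ from the ring structure of $R$ via co-embeddability, then descending from $(K^*)^\times$-scalars to $K^\times$-scalars --- for each axiom in turn. A secondary issue is that parts~(2) and~(3) rely on \cite{prdf5} to produce a $K$-definable generating set (a $W_n$-set, respectively a valuation-ring generator) inside $R$; if one wanted a self-contained argument for part~(3), one would need to extract such a generator directly from the valuation structure of $R$, for instance by using the total order on fractional ideals of $R$ together with the filtered-union decomposition to find a $W_i$ with $R \subseteq cW_i^*$ for some $c \in (K^*)^\times$.
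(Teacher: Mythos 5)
Your overall architecture matches the paper's: prove part (1) by transferring the topology axioms from $K^*$ down to $K$ and then invoking Lemma~\ref{duh}, deduce part (2) from part (1) together with Proposition~\ref{wdict-1} and the result from \cite{prdf5} that a $\vee$-definable $W_n$-ring is co-embeddable with a definable set, and read part (3) as the $n=1$ case. Your axiom-by-axiom verification that $V$ defines a topology is a slightly more laborious version of the paper's route (which observes that $R$ itself induces a topology on $K^*$ by Example~1.2 of \cite{PZ}, that $U^*$ defines the same topology by co-embeddability, and that ``$U^*$ defines a locally bounded ring topology'' is a single first-order sentence that transfers to $(K,+,\cdot,U)$), but it comes to the same thing.

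There is, however, one genuine gap in your reverse direction of part (1): you assume at the outset that $R$ is co-embeddable with a \emph{$K$-definable} set $V^*$. The hypothesis only gives co-embeddability with a definable set $D = \phi(K^*;\vec{b})$ whose parameters $\vec{b}$ may lie anywhere in $K^*$, and this generality is actually used in part (2), since Proposition~4.1 of \cite{prdf5} produces such a set with no control over its parameters. The missing step is the descent: one considers the set $S$ of tuples $\vec{c}$ such that $\phi(K^*;\vec{c})$ is co-embeddable with $D$ (equivalently, with $R$); the first description shows $S$ is definable, the second shows $S$ is $\Aut(K^*/K)$-invariant, hence $S$ is $K$-definable, and since $K \preceq K^*$ and $S \neq \emptyset$ one finds $\vec{c} \in S(K)$ and sets $U = \phi(K;\vec{c})$. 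Only after this reduction does your argument (and the scalar-descent from $(K^*)^\times$ to $K^\times$ that you do address) go through. Separately, a small citation slip: the fact that a $\vee$-definable $W_n$-ring is co-embeddable with a definable set is Proposition~4.1 of \cite{prdf5}, not Proposition~3.6, which is the statement that a $W_n$-ring defines a $W_n$-topology.
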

\begin{proof}
  \begin{enumerate}
  \item If $R = R_\tau$, then $R$ is co-embeddable with $U^*$ for any
    $U$ defining $\tau$, by
    Proposition~\ref{p2}(\ref{p2e1}-\ref{p2e2}).

    Conversely, suppose $R$ is co-embeddable with a definable set $D$.
    Write $D$ as $\phi(K^*;\vec{b})$ for some formula
    $\phi(x;\vec{y})$ (in the language of $\All_K$), and some
    parameters $\vec{b}$ from $K^*$.  Let $S$ be the set of $\vec{c}$
    in $K^*$ satisfying the equivalent conditions
    \begin{itemize}
    \item $\phi(K^*;\vec{c})$ is co-embeddable with $D$
    \item $\phi(K^*;\vec{c})$ is co-embeddable with $R$.
    \end{itemize}
    From the first characterization, $S$ is definable.  From the
    second characterization, $S$ is $\Aut(K^*/K)$-invariant.
    Therefore $S$ is $K$-definable, and we can find $\vec{c} \in
    S(K)$.  Let $U = \phi(K;\vec{c})$.  Then $U^* =
    \phi(K^*;\vec{c})$, and $U^*$ is co-embeddable with $R$.

    Let $\tau_R$ denote the topology on $K^*$ induced by $R$, as in
    Example~1.2 of \cite{PZ}.  Then $U^*$ defines $\tau_R$, by
    co-embeddability.  The fact that $U^*$ defines a locally bounded
    ring-topology on $K^*$ is expressed by a first-order sentence in
    the structure $(K^*,+,\cdot,U^*)$.  The same sentence holds in
    $(K,+,\cdot,U)$, and so $U$ defines a topology $\tau$ on $K$.  Let
    $R' = R_\tau$ be the corresponding $\vee$-definable subring on
    $K^*$.  Then $R'$ is co-embeddable with $U^*$, by
    Proposition~\ref{p2}.  Therefore $R'$ is co-embeddable with $R$.
    By Lemma~\ref{duh}, $R' = R_\tau$.
  \item If $\tau$ is a $W_n$-topology, then $R_\tau$ is a $W_n$-ring
    by Proposition~\ref{wdict-1}.  Conversely, suppose $R$ is a
    $W_n$-ring.  By assumption, $R$ is $\vee$-definable.  By
    Proposition~4.1 in \cite{prdf5}, $R$ is co-embeddable with a
    definable set.  By the previous point, $R = R_\tau$ for some
    topology $\tau$.  By Proposition~\ref{wdict-1}, the fact that $R$
    is a $W_n$-ring forces $\tau$ to be a $W_n$-topology.
  \item This is the $n=1$ case of the previous point. \qedhere
  \end{enumerate}
\end{proof}

\begin{lemma}\label{esimals}
  If $\tau$ is a V-topology on $K$ and $\Oo = R_\tau$ is the
  corresonding valuation ring, then the maximal ideal $\mm \subseteq
  \Oo$ is the set of $K$-infinitesimals, i.e., $\mm = \bigcap_{U \in
    \tau} U^*$.
\end{lemma}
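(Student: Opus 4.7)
The proof splits into two inclusions. For $\bigcap_{U \in \tau} U^* \subseteq \mm$, the idea is to show that $I := \bigcap_{U \in \tau} U^*$ is a proper ideal of $\Oo = R_\tau$; since $\Oo$ is a valuation ring with unique maximal ideal $\mm$, the containment follows. The inclusion $I \subseteq \Oo$ is immediate from local boundedness: any $U \in \tau \cap \tau^\perp$ satisfies $U^* \subseteq R_\tau$. Closure of $I$ under subtraction and under multiplication by elements of $\Oo$ come from transferring the relevant continuity statements into $K^*$: given $U \in \tau$, choose $V \in \tau$ with $V - V \subseteq U$; given $U \in \tau$ and a bounded $B \in \tau^\perp$, choose $V \in \tau$ with $V \cdot B \subseteq U$ (the standard Prestel--Ziegler fact that bounded sets are absorbed by small neighborhoods). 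Each such inclusion is first-order in the predicates $V, B, U$, so it passes to $K^*$, giving the desired closure properties after writing a typical $r \in \Oo$ as $r \in B^*$ for some bounded $B \in \tau^\perp$. Finally, $I$ is proper because Hausdorffness of $\tau$ gives some $U \in \tau$ with $1 \notin U$, whence $1 \notin U^*$.

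For the reverse inclusion $\mm \subseteq \bigcap_{U \in \tau} U^*$, fix $x \in \mm$, $x \neq 0$; then $x^{-1} \notin \Oo$, since $x$ is a non-unit of the valuation ring $\Oo$. Given an arbitrary $U \in \tau$, I would invoke the V-topology axiom in the following sharper form: there exists a bounded $V \in \tau \cap \tau^\perp$ such that $(K \setminus V)^{-1} \subseteq U$. This is standard for V-topologies, which admit a neighborhood basis of bounded sets of the form $a \cdot \Oo_v$ arising from a defining valuation, and such sets transparently satisfy the inversion-at-infinity property. The inclusion $(K \setminus V)^{-1} \subseteq U$ is first-order in the predicates $V, U$, hence transfers to $K^*$ as $(K^* \setminus V^*)^{-1} \subseteq U^*$. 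Boundedness of $V$ gives $V^* \subseteq \Oo$; since $x^{-1} \notin \Oo$, we have $x^{-1} \in K^* \setminus V^*$, and therefore $x = (x^{-1})^{-1} \in U^*$, as desired.

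The main subtleties, which are mild, lie in (i) producing a \emph{bounded} $V$ in the V-axiom, which uses the concrete valuation description of V-topologies rather than any abstract axiom alone, and (ii) systematically transferring inclusions from $K$ to $K^*$ via first-order logic applied to the defining predicates. Both are routine within the framework already developed in this section.
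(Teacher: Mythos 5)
Your proof is correct. For the containment $\mm \subseteq \bigcap_{U\in\tau}U^*$ you use exactly the same mechanism as the paper: the paper fixes a bounded neighborhood $B$ and forms $C=\{x : 1/x\notin B\}$, which is precisely your bounded $V$ with $(K\setminus V)^{-1}\subseteq U$ (and the paper likewise justifies its boundedness/openness by appeal to the V-topology axioms of Prestel--Ziegler, so your level of justification matches). The difference is in the other containment. The paper runs a single chain of equivalences $x\in\mm \iff 1/x\notin\Oo \iff \cdots \iff x\in\bigcap_U U^*$, using the families $\{aB^*\}$ and $\{a^{-1}C^*\}$ as cofinal/coinitial bases for $\Oo$ and for the infinitesimals, so the V-property is used in both directions at once. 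You instead prove $\bigcap_U U^*\subseteq\mm$ by showing the infinitesimals form a proper ideal of $R_\tau$ using only general locally bounded ring-topology facts (continuity of subtraction, absorption of bounded sets, Hausdorffness) together with locality of the valuation ring. That half of your argument is more general -- it shows for any locally bounded ring topology that the infinitesimals are a proper ideal of $R_\tau$, with no V-hypothesis -- at the cost of being slightly longer; the paper's chain is more compact but entirely specific to V-topologies. Both are sound.
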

\begin{proof}
  Fix a bounded neighborhood $B \in \tau \cap \tau^\perp$.  Let $C =
  \{x \in K : 1/x \notin B\}$.  Then $C^{-1} \cap B = \emptyset$ and
  $C \cup B^{-1} = K$, so that $C$ is a bounded neighborhood of 0.
  (This follows by properties of V-topologies, such as the
  \emph{definition} of V-topologies given in \S 3 of \cite{PZ}.)  Then
  \begin{align*}
    \Oo &= \bigcup_{U \in \tau^\perp} U^* = \bigcup_{a \in K^\times} aB^*. \\
    \bigcap_{U \in \tau} U^* &= \bigcap_{a \in K^\times} aC^* = \bigcap_{a \in K^\times} a^{-1}C^*.
  \end{align*}
  Therefore, the following are equivalent for $x \in K^*$:
  \begin{align*}
    x \in \mm &\iff 1/x \notin \Oo \iff \left(\forall a \in K^\times : 1/x \notin aB^*\right) \\
    & \iff \left(\forall a \in K^\times : 1/(ax) \notin B^* \right) \iff \left( \forall a \in K^\times : ax \in C^*\right) \\
    & \iff \left( \forall a \in K^\times : x \in a^{-1}C^*\right) \iff x \in \bigcap_{a \in K^\times} a^{-1}C^* \iff x \in \bigcap_{U \in \tau} U^*. \qedhere
  \end{align*}
\end{proof}

\begin{lemma}\label{almost}
  Let $X \subseteq K^*$ be $\vee$-definable over a small set.  If $X$
  has finite orbit under $\Aut(K^*/K)$, then $X$ is $\vee$-definable
  over $K$.
\end{lemma}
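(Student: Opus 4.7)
The plan is to reduce to the case where $X$ is $\Aut(K^*/K)$-invariant, and then conclude by a standard compactness argument.

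For the invariant case, I would argue as follows. Suppose $X$ is $\vee$-definable over a small set $B$ and $\Aut(K^*/K)$-invariant, and let $\pi(x)$ be a partial type over $B$ whose realizations form $X^c$. For each $a \in X$, invariance of $X$ ensures that every $\Aut(K^*/K)$-conjugate of $a$, i.e., every realization of $\tp(a/K)$, lies in $X$. Equivalently, the partial type $\tp(a/K) \cup \pi(x)$ is inconsistent; by compactness there is a formula $\phi_a(x) \in \tp(a/K)$ (over $K$) and a finite $\pi_0 \subseteq \pi$ with $\phi_a(K^*) \cap \pi_0(K^*) = \emptyset$, whence $\phi_a(K^*) \subseteq X$. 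Then $X = \bigcup_{a \in X} \phi_a(K^*)$ exhibits $X$ as $\vee$-definable over $K$.

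The reduction amounts to showing that the orbit $\{X_1, \ldots, X_n\}$ of $X$ must be trivial, i.e.\ $n=1$. The union $\bigcup_i X_i$ and the intersection $\bigcap_i X_i$ are $\vee$-definable over a small set (a union of parameter sets for the $X_i$) and $\Aut(K^*/K)$-invariant, hence by the compactness step above already $\vee$-definable over $K$. The goal is to leverage this, together with the rigidity of the theory $\All_K$, to force the orbit to be trivial. In $\All_K$ every subset of $K$ is named, so the $K$-definable subsets of $K^*$ are exactly the sets $U^*$ for $U \subseteq K$; in particular $\acl(K) = K$, and the analogous statement $\acl^{eq}(K) = \dcl^{eq}(K)$ should extend to cover codes of $\vee$-definable sets. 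The $K$-traces $X_i \cap K$ all coincide (since $\Aut(K^*/K)$ fixes $K$ pointwise while permuting the $X_i$), giving a single subset $U := X \cap K \subseteq K$; combined with the $K$-definability of $\bigcup_i X_i$ and $\bigcap_i X_i$ and the richness of the family of $K$-definable sets, this should preclude a nontrivial finite orbit.

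The main obstacle is this reduction step: establishing that no $\vee$-definable set admits a nontrivial finite $\Aut(K^*/K)$-orbit in $\All_K$. In general model-theoretic settings finite orbits need not be trivial (e.g.\ Galois-conjugate singletons over a non-closed base), so the argument must genuinely use that all unary predicates on $K$ are in the language. Concretely, a nontrivial orbit would produce $a \in X_1 \setminus X_2$ and $\sigma \in \Aut(K^*/K)$ with $\sigma(X_1) = X_2$ and hence $\sigma(a) \in X_2 \setminus X_1$ but $\tp(\sigma(a)/K) = \tp(a/K)$; I expect the contradiction to arise by comparing the $K$-traces with the orbit of $a$ and exploiting that every $K$-definable subset of $K^*$ is of the form $U^*$.
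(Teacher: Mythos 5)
Your handling of the invariant case is correct and is essentially the same argument as the paper's second step (the paper phrases it topologically via the surjection $S_n(B) \twoheadrightarrow S_n(K)$ of compact type spaces; your compactness argument applied to $\tp(a/K) \cup \pi(x)$ is an equivalent rendering). However, the reduction from ``finite orbit'' to ``invariant'' is the real content of the lemma, and it is precisely the step you flag as the main obstacle and leave open; so the proposal has a genuine gap. Your diagnosis of where the extra strength must come from is also slightly misdirected: what kills nontrivial finite orbits is not that every subset of $K$ is named by a predicate, but that $K$ is a \emph{model}, i.e.\ an elementary substructure of the monster $K^*$. (Your counterexample of Galois-conjugate singletons lives over a base that is not a model; over a model of $\ACF$ no such orbit exists, with or without extra predicates.)

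The missing ingredient is the standard fact that over a model, any $\Aut(K^*/K)$-invariant equivalence relation with boundedly many classes is refined by $\equiv_K$ (equality of types over $K$). Granting this, the reduction is immediate from the configuration you already set up: let $X_1,\ldots,X_n$ be the orbit and define $a \approx b$ iff $a \in X_i \Leftrightarrow b \in X_i$ for all $i$; this relation is $K$-invariant with at most $2^n$ classes. If $X$ were not invariant you produce, exactly as in your last sentence, elements $a$ and $\sigma(a)$ with $\tp(a/K) = \tp(\sigma(a)/K)$ but $a \not\approx \sigma(a)$, contradicting the fact. The fact itself is proved by taking a global coheir $p \supseteq \tp(a/K)$ (which exists because $K$ is a model) and building a sequence $c_1, c_2, \ldots$ such that both $a, c_1, c_2, \ldots$ and $b, c_1, c_2, \ldots$ are $K$-indiscernible; since an invariant equivalence relation with boundedly many classes must be constant on $K$-indiscernible sequences, $a \approx c_1 \approx b$. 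Without some argument of this kind, the union-and-intersection manipulations and the observation that the traces $X_i \cap K$ coincide do not suffice, since the $X_i$ could a priori differ only on $K^* \setminus K$.
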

This is somewhat well-known, but we include the proof for
completeness.
\begin{proof}
  We first claim that $X$ is $\Aut(K^*/K)$-invariant ($K$-invariant).
  Recall that if $\approx$ is a $K$-invariant equivalence relation
  with boundedly-many equivalence classes, and $a \equiv_K b$, then $a
  \approx b$, because $K$ is a model.\footnote{This is a well-known
    fact about Lascar strong type, and is easy to prove by using a
    global coheir of $\tp(a/K)$ to build a sequence $c_1, c_2, \ldots$
    such that both $a,c_1,c_2,\ldots$ and $b,c_1,c_2,\ldots$ are
    $K$-indiscernible.}  Let $a \approx b$ indicate that $a \in
  \sigma(X) \iff b \in \sigma(X)$ for all $\sigma \in \Aut(K^*/K)$.
  Then $\approx$ is $K$-invariant, with finitely many equivalence
  classes.  If $X$ itself fails to be $K$-invariant, then there are
  $a$ and $b$ such that $a \equiv_K b$ but $a \in X$ and $b \notin X$.
  Then $a \not \approx b$, a contradiction.

  Thus, $X$ is $K$-invariant.  Now, any $\vee$-definable $K$-invariant
  set is $\vee$-definable over $K$.\footnote{An equivalent,
    better-known statement is that a type-definable $K$-invariant set
    is type-definable over $K$.}  Indeed, if $X$ is $\vee$-definable
  over a small parameter set $B \supseteq K$, then $X$ corresponds to
  some open set $U$ in the space $S_n(B)$ of $n$-types over $B$.  The
  $K$-invariance means that $U$ is the preimage of some set $U'
  \subseteq S_n(K)$ under the continuous surjection $S_n(B)
  \twoheadrightarrow S_n(K)$.  The complement $S_n(K) \setminus U'$ is
  the image of the closed set $S_n(B) \setminus U$, so $S_n(K) \setminus U'$ is
  closed, $U'$ is open, and $X$ is $\vee$-definable over $K$.
\end{proof}

\begin{proposition}\label{int-loc}
  Let $\tau$ be a W-topology on $K$, and let $R = R_\tau$ be the
  corresponding subring of $K^*$.
  \begin{itemize}
  \item There is a unique W-topology $\widetilde{\tau}$ such that the
    corresponding ring $R_{\widetilde{\tau}}$ is the integral closure
    of $R$.
  \item There are W-topologies $\tau_1, \ldots, \tau_n$ such that the
    corresponding rings $R_{\tau_1}, \ldots, R_{\tau_n}$ are exactly
    the localizations of $R$ at its maximal ideals.
  \end{itemize}
\end{proposition}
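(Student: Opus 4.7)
Both claims will be deduced from Proposition~\ref{wdict-2}(2), which says that a $\vee$-definable $K$-subalgebra $R' \subseteq K^*$ with $K \subseteq R' \subsetneq \Frac(R') = K^*$ arises as $R_{\tau'}$ for a $W_n$-topology $\tau'$ precisely when $R'$ is a $W_n$-ring; moreover $\tau'$ is then determined by $R'$ via Proposition~\ref{p1}(\ref{p1e4}). Uniqueness is therefore automatic in both parts, and the plan is just to verify that $\widetilde{R}$ and each localization $R_{\mm_i}$ satisfy the listed hypotheses.

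For the integral closure $\widetilde{R}$, the condition ``$x$ is integral over $R$'' unpacks as the $\vee$-definable condition $\bigvee_{n \geq 1} \exists a_0, \ldots, a_{n-1} \in R\, (x^n + a_{n-1}x^{n-1} + \cdots + a_0 = 0)$. Since $R$ is itself $\vee$-definable over $K$, so is $\widetilde{R}$. The chain $K \subseteq R \subseteq \widetilde{R} \subsetneq K^*$ and the equality $\Frac(\widetilde{R}) = K^*$ are standard commutative algebra, so the only substantive point is that $\widetilde{R}$ remains a $W_n$-ring — I will invoke closure of $W_n$-rings under integral extension inside their fraction field from \cite{prdf5}. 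Proposition~\ref{wdict-2}(2) then delivers the unique $\widetilde{\tau}$.

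For the localizations I plan to import from \cite{prdf5} two further facts about $\vee$-definable $W_n$-rings: that they have only finitely many maximal ideals $\mm_1, \ldots, \mm_n$, and that each localization $S_i := R_{\mm_i}$ is again a $W_n$-ring (which in particular rules out $S_i = K^*$). Since $R$ is $K$-invariant, $\Aut(K^*/K)$ permutes the finite sets $\{\mm_1, \ldots, \mm_n\}$ and $\{S_1, \ldots, S_n\}$, so each $S_i$ has finite $\Aut(K^*/K)$-orbit.

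The main obstacle will be promoting the $\vee$-definability of each individual $S_i$ from ``over a small set'' to ``over $K$'', since $\mm_i$ need not be $K$-definable. This is precisely the situation Lemma~\ref{almost} addresses. Once I know $S_i$ is $\vee$-definable over some small parameter set — which follows from small-parameter type-definability of $\mm_i$ (a feature of maximal ideals of $\vee$-definable $W_n$-rings, again to be imported from \cite{prdf5}) together with the formula $S_i = \{x \in K^* : \exists s \in R \setminus \mm_i,\ sx \in R\}$ — Lemma~\ref{almost} upgrades $S_i$ to being $\vee$-definable over $K$. Proposition~\ref{wdict-2}(2) then produces each $\tau_i$, completing the proof.
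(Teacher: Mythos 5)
Your proposal matches the paper's proof in all essentials: the paper likewise gets $\vee$-definability of the integral closure and the (finitely many) localizations from \cite{prdf5} (Propositions~4.7 and 4.8), upgrades to $\vee$-definability over $K$ via Lemma~\ref{almost} using finiteness of the $\Aut(K^*/K)$-orbits, notes these overrings of $R$ contain $K$ and have finite weight (Lemma~2.7 of \cite{prdf5}), and concludes with Proposition~\ref{wdict-2}. Your direct unpacking of ``integral over $R$'' as a small union of $K$-definable conditions is a harmless shortcut that avoids Lemma~\ref{almost} for the first bullet only; otherwise the argument is the same.
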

\begin{proof}
  By Propositions~4.7 and 4.8 in \cite{prdf5}, the integral closure
  and the localizations are $\vee$-definable.  The integral closure is
  clearly $K$-invariant, and the localizations can at most be permuted
  by $\Aut(K^*/K)$.  By Lemma~\ref{almost}, the localizations and the
  integral closure are $\vee$-definable over $K$.  They are larger
  than $R$, so they contain $K$ and are $K$-algebras.  By Lemma~2.7 in
  \cite{prdf5}, they are rings of finite weight.  By
  Proposition~\ref{wdict-2}, they come from W-topologies on $K$.
\end{proof}

\begin{definition}\label{def:comps}
  Let $\tau$ be a W-topology on $K$.
  \begin{itemize}
  \item The \emph{integral closure} of $\tau$ is the topology
    $\widetilde{\tau}$ of Proposition~\ref{int-loc}.
  \item The \emph{local components} of $\tau$ are the topologies
    $\tau_1, \ldots, \tau_n$ of Proposition~\ref{int-loc}.
  \end{itemize}
\end{definition}

Recall from (\cite{prdf5}, Proposition~2.12) that if $R$ is a
$W_n$-ring, then the integral closure $\widetilde{R}$ is a
multi-valuation ring, a finite intersection of valuation rings.
\[ \Oo_1 \cap \cdots \cap \Oo_n.\]
If the $\Oo_i$ are chosen to be pairwise incomparable, then the
$\Oo_i$ are exactly the localizations of $\widetilde{R}$ at its
maximal ideals (\cite{prdf2}, Corollary~6.7).

\begin{proposition}\label{v-coarses}
  Let $\tau$ be a W-topology on $K$.  Then the V-topological
  coarsenings of $\tau$ are exactly the local components of the
  integral closure.

  In other words, if $R \subseteq K^*$ is the corresponding
  $\vee$-definable ring, and we write $\widetilde{R}$ as an
  intersection of pairwise incomparable valuation rings
  \[ \widetilde{R} = \Oo_1 \cap \cdots \cap \Oo_n,\]
  then the $\Oo_i$ are exactly the $\vee$-definable rings
  corresponding to the V-topological coarsenings of $\tau$.
\end{proposition}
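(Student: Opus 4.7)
The plan is to verify the equality of the two sets by chasing the dictionary between $\vee$-definable subrings of $K^*$ and topologies on $K$ established in Propositions~\ref{p1} and~\ref{wdict-2}.

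First I would show that each $\Oo_i$ arises from a V-topological coarsening of $\tau$. By Proposition~\ref{int-loc} applied to $\widetilde{\tau}$, the rings $\Oo_i$ are $\vee$-definable over $K$, and being valuation rings they correspond via Proposition~\ref{wdict-2}(3) to V-topologies $\sigma_i$ with $R_{\sigma_i} = \Oo_i$. The chain $\Oo_i \supseteq \widetilde{R} \supseteq R$ together with Proposition~\ref{p1}(\ref{p1e4}) ensures each $\sigma_i$ is coarser than $\tau$.

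For the reverse inclusion, I would start with an arbitrary V-topological coarsening $\sigma$ of $\tau$ and set $V = R_\sigma$. Then $V$ is a $\vee$-definable-over-$K$ valuation ring of $K^*$ containing $R$, hence---since valuation rings are integrally closed---containing $\widetilde{R} = \Oo_1 \cap \cdots \cap \Oo_n$. A standard commutative-algebra ``center'' argument then places $V$ above some $\Oo_i$: the prime $\mm_V \cap \widetilde{R}$ is contained in some maximal ideal $\mm_i$ of $\widetilde{R}$, so every element of $\widetilde{R} \setminus \mm_i$ is a unit of $V$, giving $V \supseteq \widetilde{R}_{\mm_i} = \Oo_i$.

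The main obstacle is promoting $V \supseteq \Oo_i$ to equality, since a priori $V$ could be a proper valuation overring of $\Oo_i$. The strategy is to use Proposition~\ref{wdict-2}(3) once more to view both $V$ and $\Oo_i$ as the $\vee$-definable rings of V-topologies $\sigma$ and $\sigma_i$ on $K$; the containment then translates, via Proposition~\ref{p1}(\ref{p1e4}), into $\sigma$ being a coarsening of $\sigma_i$. Two distinct V-topologies on a field are always incomparable---a classical independence fact for V-topologies, cf.~\cite{PZ}---so $\sigma = \sigma_i$ and hence $V = \Oo_i$, which closes the argument.
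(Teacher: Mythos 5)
Your proof is correct and follows essentially the same route as the paper: characterize the V-topological coarsenings as the nontrivial, $\vee$-definable-over-$K$ valuation rings containing $R$, show each such ring lies above some $\Oo_i$, and then use the incomparability of distinct V-topologies to force equality. The only cosmetic differences are that you unpack the paper's citation of Corollary~6.8 of \cite{prdf2} into an explicit center argument, and that the paper phrases the final step as co-embeddability of comparable nontrivial valuation rings plus Lemma~\ref{duh} rather than incomparability of V-topologies; these amount to the same facts.
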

\begin{proof}
  By Proposition~\ref{wdict-2} and Proposition~\ref{p1}(\ref{p1e4}),
  the V-topological coarsenings of $\tau$ correspond exactly to the
  valuation rings $\Oo$ on $K$ with the following properties:
  \begin{itemize}
  \item $\Oo$ is non-trivial, i.e., $\Oo \ne K^*$.
  \item $\Oo$ is $\vee$-definable over $K$.
  \item $\Oo$ contains $R$.
  \end{itemize}
  As in the proof of Proposition~\ref{int-loc}, the $\Oo_i$ certainly
  have these properties.  Let $\Oo$ be some other valuation ring with
  these properties.  Then $\Oo \supseteq \widetilde{R} = \Oo_1 \cap
  \cdots \cap \Oo_n$.  By Corollary~6.8 in \cite{prdf2}, there is some
  $i$ such that $\Oo \supseteq \Oo_i$.  Then $\Oo$ is a coarsening of
  $\Oo_i$.  By non-triviality, $\Oo$ and $\Oo_i$ induce the same
  topology, so they are co-embeddable.  By Lemma~\ref{duh}, $\Oo =
  \Oo_i$.
\end{proof}

\subsection{Local W-topologies}

\begin{definition}
  A $W_n$-topology on $K$ is \emph{local} if for every bounded set $B
  \subseteq K$, there is a bounded set $C \subseteq K$ such that
  \[ \forall x \in B : (1/x \in C \text{ or } 1/(1-x) \in C).\]
\end{definition}
\begin{proposition} \label{local-dict}
  Let $K^*$ be an ``ultrapower'' of $K$.  Let $\tau$ be a W-topology
  on $K$, and let $R$ be the associated ring in $K^*$.  Then $\tau$ is
  local if and only if $R$ is a local ring.
\end{proposition}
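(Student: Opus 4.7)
The plan is to use the standard algebraic fact that a commutative unital ring is local iff for every element $x$, either $x$ or $1-x$ is a unit, combined with the filtered-union description $R = \bigcup_{B \in \tau^\perp} B^*$ of Proposition~\ref{p1}, under which ``$x$ is a unit of $R$'' becomes ``$1/x \in (B')^*$ for some bounded $B' \subseteq K$''. The two directions then amount to translating between first-order conditions on bounded sets in $K$ and $\vee$-definable conditions on $R$ in $K^*$, with the transfer handled by $K \preceq K^*$ and saturation.

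The forward direction is almost immediate. Assuming $\tau$ is local, given $x \in R$ pick a bounded $B$ with $x \in B^*$, and let $C$ be the bounded set that witnesses locality of $\tau$ at $B$. The first-order statement ``$\forall x' \in B : (\exists y \in C : x'y = 1) \vee (\exists y \in C : (1-x')y = 1)$'' transfers to $B^*$ and $C^*$, so applied to our $x$ it forces either $1/x$ or $1/(1-x)$ into $C^* \subseteq R$; hence $x$ or $1-x$ is a unit of $R$.

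For the converse, assume $R$ is local and fix a bounded $B \subseteq K$; the goal is to produce a bounded $C \subseteq K$ witnessing locality at $B$. The strategy is a saturation argument applied to the partial type $\pi(x)$ over $K$ consisting of ``$x \in B^*$'' together with, for each $B' \in \tau^\perp$, the $K$-definable formulas $\forall y \in (B')^* : xy \ne 1$ and $\forall y \in (B')^* : (1-x)y \ne 1$. A realization of $\pi$ in $K^*$ would lie in $R$ yet have neither $x$ nor $1-x$ invertible in $R$, contradicting locality. By saturation some finite subtype is already inconsistent, and letting $C$ be the union of the finitely many bounded sets that appear (still bounded, since $\tau^\perp$ is closed under finite unions), the failure of this finite subtype yields ``$\forall x \in B^* : (\exists y \in C^* : xy = 1) \vee (\exists y \in C^* : (1-x)y = 1)$'' in $K^*$; this first-order statement then descends to $K$ by $K \preceq K^*$, giving locality of $\tau$.

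The main thing to get right is that, in the reverse direction, each clause ``$1/x \notin R$'' really is a small conjunction of $K$-definable formulas in $x$, so that the saturation step applies cleanly. This is routine because $R$ is a filtered union of $K$-definable sets and ``$\exists y \in (B')^* : xy = 1$'' is a $K$-definable condition on $x$; the degenerate cases $x = 0$ and $x = 1$ cause no trouble since $C$ may be enlarged by $\{1\}$ without losing boundedness.
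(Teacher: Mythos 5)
Your proof is correct and follows essentially the same route as the paper's: both reduce locality of $R$ to the criterion that $x$ or $1-x$ is a unit, and translate between $K$ and $K^*$ via the filtered union $R = \bigcup_{B \in \tau^\perp} B^*$, elementarity, and saturation. The paper merely packages the two directions as a single chain of equivalences, with your type-inconsistency argument appearing there as the step from ``for every bounded $B$, $\gamma(B^*,R)$'' to ``for every bounded $B$ there is a bounded $C$ with $\gamma(B^*,C^*)$.''
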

\begin{proof}
  Let $\gamma(X,Y)$ stand for
  \[ \forall x \in X : 1/x \in Y \text{ or } 1/(1-x) \in Y.\]
  Then the following statements are equivalent:
  \begin{enumerate}
  \item \label{dirt1} $R$ is a local ring.
  \item \label{dirt2} For every $x \in R$, at least one of $x$ or $1 -
    x$ is in $R^\times$.
  \item \label{dirt3} $\gamma(R,R)$.
  \item \label{dirt4} For every bounded $B \subseteq K$, we have
    $\gamma(B^*,R)$.
  \item \label{dirt5} For every bounded $B \subseteq K$, there is
    bounded $C \subseteq K$ such that $\gamma(B^*,C^*)$.
  \item \label{dirt6} For every bounded $B \subseteq K$, there is
    bounded $C \subseteq K$ such that $\gamma(B,C)$.
  \item \label{dirt7} $\tau$ is a local W-topology.
  \end{enumerate}
  The equivalences are proven as follows:
  \begin{itemize}
  \item (\ref{dirt1})$\iff$(\ref{dirt2}): well-known commutative
    algebra.
  \item (\ref{dirt2})$\iff$(\ref{dirt3}): the definition of
    $\gamma(-,-)$.
  \item (\ref{dirt3})$\iff$(\ref{dirt4}): $R$ is covered by the $B^*$.
  \item (\ref{dirt4})$\iff$(\ref{dirt5}): $R$ is a filtered union of
    the $C^*$, and the structure is saturated.
  \item (\ref{dirt5})$\iff$(\ref{dirt6}): $K \preceq K^*$.
  \item (\ref{dirt6})$\iff$(\ref{dirt7}): the definition of ``local
    W-topology.''  \qedhere
  \end{itemize}
\end{proof}
For example, if $\tau$ is a W-topology, then the local components of
$\tau$ (Definition~\ref{def:comps}) are local W-topologies.
\begin{warning}\label{ww}
  \begin{enumerate}
  \item \label{ww1} A local $W_n$-ring need not induce a local
    $W_n$-topology.  For example, let $\Oo_1$ and $\Oo_2$ be the two
    valuation rings $\Zz[i]_{(2+i)}$ and $\Zz[i]_{(2-i)}$ on the field
    $K = \Qq(i)$.  The residue fields are both isomorphic to
    $\Zz/(5)$.  Let $R$ be the set of $x \in \Oo_1 \cap \Oo_2$ such
    that $\res_1(x) = \res_2(x)$.  Then $R$ is a $W_2$-ring, because
    it contains the valuation ring $\Zz_{(5)}$ on the subfield $\Qq$,
    and $[\Qq(i) : \Qq] = 2$.  (See Lemma~2.7 in \cite{prdf5}.)
    Additionally, $R$ is a local ring---the maximal ideal is the set
    of $x$ such that $\res(x) = 0$.  However, $R$ is co-embeddable
    with $\Oo_1 \cap \Oo_2$, and induces a $V^2$-topology, which is
    not local.
  \item \label{ww2} Conversely, a non-local $W_n$-ring can induce a
    local $W_n$-topology.  For example, if $\Oo_1$ and $\Oo_2$ are two
    incomparable but dependent valuation rings, then $\Oo_1 \cap
    \Oo_2$ is a non-local $W_2$-ring which induces the same V-topology
    as $\Oo_1 \cdot \Oo_2$.  V-topologies are local.
  \end{enumerate}
\end{warning}

We shall need the fact that local $W_n$-topologies form a local class,
in the sense of \cite{PZ}.  The following lemma helps translate
statements about bounded sets into statements about neighborhoods.
\begin{lemma}\label{bound-formulas}
  Let $(K,\tau)$ be a field with a locally bounded ring topology.  As
  usual $\tau$ denotes the filter of neighborhoods of 0 and
  $\tau^\perp$ denotes the ideal of bounded sets.  Let $\phi(X)$ be a
  formula in $X \subseteq K$, depending positively on $X$.  Then
  \[ \exists B \in \tau^\perp : \phi(B)\]
  is equivalent to
  \begin{equation}
    \forall U \in \tau ~ \exists c \in K^\times : \phi(cU). \label{uc}
  \end{equation}
  Dually, if $\phi(X)$ is a formula in which $X$ appears negatively,
  then
  \[ \forall B \in \tau^\perp : \phi(B)\]
  is equivalent to
  \[ \exists U \in \tau ~ \forall c \in K^\times : \phi(cU).\]
\end{lemma}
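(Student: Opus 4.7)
The plan is to prove the positive-occurrence statement directly via the standard characterization of $\tau^\perp$ in terms of embeddability, and then derive the negative-occurrence statement by simply negating both sides. Since $\phi$ depending positively on $X$ is equivalent to saying $\phi$ is monotone in $X$ (i.e., $X \subseteq X'$ and $\phi(X)$ implies $\phi(X')$), while negative dependence means the opposite monotonicity, the key observations will be of a purely set-theoretic nature, relying on the Prestel--Ziegler fact (Lemma~2.1 in \cite{PZ}, already quoted in the excerpt) that a set is bounded iff it embeds into every neighborhood, combined with local boundedness.

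For the forward direction of the positive case, suppose some $B \in \tau^\perp$ satisfies $\phi(B)$. Given an arbitrary $U \in \tau$, by the embeddability characterization of boundedness there is $c \in K^\times$ with $cB \subseteq U$, i.e., $B \subseteq c^{-1}U$. Monotonicity of $\phi$ in $X$ then yields $\phi(c^{-1}U)$, so $c^{-1}$ is the scalar witnessing (\ref{uc}). For the converse, use local boundedness of $\tau$ to fix some $V \in \tau \cap \tau^\perp$; applying the hypothesis with $U = V$ produces $c \in K^\times$ with $\phi(cV)$, and since $\tau^\perp$ is an ideal the set $cV$ is bounded, so $B := cV$ witnesses $\exists B \in \tau^\perp : \phi(B)$.

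For the dual statement, observe that if $\phi(X)$ depends negatively on $X$, then $\neg\phi(X)$ depends positively on $X$. Applying the first half of the lemma to $\neg\phi$ gives
\[ \exists B \in \tau^\perp : \neg\phi(B) \iff \forall U \in \tau ~ \exists c \in K^\times : \neg\phi(cU), \]
and negating both sides yields exactly the dual equivalence. There is no substantive obstacle here: the only things to be careful about are (i) tracking the direction of the scaling ($c$ versus $c^{-1}$) and (ii) invoking local boundedness, not merely boundedness, to secure a starting witness $V \in \tau \cap \tau^\perp$ in the backward direction. Both are routine.
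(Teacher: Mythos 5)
Your proof is correct and follows essentially the same route as the paper's: the forward direction uses the embeddability characterization of boundedness plus monotonicity of $\phi$ in $X$, the converse uses local boundedness to produce a bounded neighborhood as witness, and the dual statement is obtained formally by negation. No issues.
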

\begin{proof}
  We prove the first claim; the other follows formally.  Suppose there
  is a bounded set $B$ such that $\phi(B)$ holds.  By definition of
  ``bounded,'' for any $U \in \tau$ there is $e \in K^\times$ such
  that $eB \subseteq U$.  Then $e^{-1} U \supseteq B$, and so
  $\phi(e^{-1}U)$ is true.  This proves (\ref{uc}).

  Conversely, suppose (\ref{uc}) holds.  By local boundedness, there
  is $U \in \tau$ such that $U$ is bounded.  By (\ref{uc}), there is
  $c \in K^\times$ such that $\phi(cU)$ holds.  Then $B = cU$ is
  bounded, and $\phi(B)$ holds.
\end{proof}

\begin{proposition}\label{oh-local}
  The class of local $W_n$-topologies is cut out by a local sentence,
  and is therefore a local class.
\end{proposition}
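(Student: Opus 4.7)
My plan is to apply Lemma~\ref{bound-formulas} twice in succession to convert the definition of locality, which is phrased in terms of the ideal $\tau^\perp$ of bounded sets, into a local sentence phrased only in terms of the filter $\tau$ of neighborhoods of $0$. Since the property ``$\tau$ is a $W_n$-topology'' is already known from the machinery of \cite{prdf5} to be cut out by local sentences, it suffices to treat the \emph{locality} clause on its own; the conjunction then gives the proposition.

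First I would set $\phi(C,B) := \forall x \in B : (1/x \in C \,\vee\, 1/(1-x) \in C)$ and note that $\phi$ is positive in $C$ (enlarging $C$ preserves the disjunction) and negative in $B$ (enlarging $B$ forces the condition on more points $x$). The locality clause then reads $\forall B \in \tau^\perp \, \exists C \in \tau^\perp : \phi(C,B)$. Applying the first half of Lemma~\ref{bound-formulas} to the inner $\exists C \in \tau^\perp$, a positive occurrence of $C$, replaces this quantifier by $\forall V \in \tau \, \exists d \in K^\times$, giving the formula $\psi(B) := \forall V \in \tau \, \exists d \in K^\times : \phi(dV,B)$, which is still negative in $B$. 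The dual half of Lemma~\ref{bound-formulas} now converts $\forall B \in \tau^\perp : \psi(B)$ into
\[ \exists U \in \tau \, \forall c \in K^\times \, \forall V \in \tau \, \exists d \in K^\times \, \forall x \in cU : \bigl(1/x \in dV \,\vee\, 1/(1-x) \in dV\bigr). \]

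The final step is a parity check to confirm that this is a local sentence in the sense of \S\ref{sec:conventions}. The variable $U$ appears only inside $cU$ as the range of the universal quantifier on $x$, so enlarging $U$ makes the overall sentence strictly harder to satisfy; thus $U$ occurs negatively, which is exactly what the rule for $\exists U \in \tau$ demands. Dually, $V$ appears only on the right of a membership symbol in $1/x \in dV$ and $1/(1-x) \in dV$, so enlarging $V$ makes the disjunction easier; thus $V$ occurs positively, which is exactly what the rule for $\forall V \in \tau$ demands.

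The only thing to be careful about is this monotonicity bookkeeping, and in particular making sure that after contracting the two bounded-set quantifiers into a single prefix of $\tau$-quantifiers, no mixed-parity disaster occurs that would invalidate the local-sentence requirement. Once that is verified, the argument is a mechanical unwinding of Lemma~\ref{bound-formulas}, and conjoining with the $W_n$-topology axioms from \cite{prdf5} finishes the proof.
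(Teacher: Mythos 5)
Your proposal is correct and follows essentially the same route as the paper: both define the auxiliary formula $\gamma(X,Y) = \forall x \in X : (1/x \in Y \vee 1/(1-x) \in Y)$, note its polarities, apply Lemma~\ref{bound-formulas} twice to eliminate the quantifiers over $\tau^\perp$, and conjoin with the already-known local axiomatization of $W_n$-topologies from \cite{prdf5}. The explicit parity check at the end is a nice touch that the paper leaves implicit, but it is the same argument.
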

\begin{proof}
  The class of $W_n$-topologies is defined by a local sentence
  (\cite{prdf5}, Remark~3.4).  As in the proof of
  Proposition~\ref{local-dict}, let $\gamma(X,Y)$ be the formula
  \[ \forall x \in X : (1/x \in Y \text{ or } 1/(1-x) \in Y).\]
  This formula is negative in $X$ and positive in $Y$.  Then
  $(K,\tau)$ is local if and only if
  \[ \forall B \in \tau^\perp ~ \exists C \in \tau^\perp : \gamma(B,C).\]
  By two applications of Lemma~\ref{bound-formulas}, this is
  equivalent to
  \begin{equation*}
    \exists U \in \tau ~ \forall c \in K^\times ~ \forall V \in \tau ~
    \exists e \in K^\times : \gamma(cU,eV).
  \end{equation*}
  This is a local sentence.  In detail, it is
  \begin{align*}
    & \exists U \in \tau ~ \forall c \in K^\times ~ \forall V \in \tau ~
    \exists e \in K^\times ~ \forall x : \\ & (x/c \in U) \rightarrow
    (1/(xe) \in V \text{ or } 1/((1-x)e) \in V). \qedhere
  \end{align*}
\end{proof}

\begin{remark}
  Suppose one is only interested in locally bounded ring topologies.
  Because of Lemma~\ref{bound-formulas}, one could extend Prestel and
  Ziegler's notion of ``local sentence'' to allow quantification over
  bounded sets, i.e., over $\tau^\perp$, subject to the constraints:
  \begin{itemize}
  \item Universal quantification $\forall B \in \tau^\perp : \phi(B)$ is allowed only
    if $B$ occurs negatively in $\phi(B)$.
  \item Existential quantification $\exists B \in \tau^\perp : \phi(B)$ is allowed only
    if $B$ occurs positively in $\phi(B)$.
  \end{itemize}
  These are opposite to the constraints on quantification over
  neighborhoods.
\end{remark}

\subsection{Where we are going} \label{sec:where}

We give an outline of the remainder of the paper, in reverse order,
working backwards from our goal.

Let $\tau$ be a W-topology on $K$.  Let $\tau_1, \ldots, \tau_n$ be
the local components of $\tau$.  For reasons discussed in the
introduction, we would like to prove the following two statements:
\begin{enumerate}
\item \label{st1} The $\tau_i$ are jointly independent, and generate $\tau$.
\item \label{st2} Each $\tau_i$ has a unique V-topological
  coarsening, and this establishes a bijection between
  $\{\tau_1,\ldots,\tau_n\}$ and the set of V-topological coarsenings.
\end{enumerate}
One can reduce (\ref{st1}) to (\ref{st2}) using the independence
criterion of (\cite{prdf5}, Theorem~7.16), more or less.

As for (\ref{st2}), it translates into a statement about rings.  Fix
an ``ultrapower'' $K^*$, and let $R$ be the ring corresponding to
$\tau$.  Let $\widetilde{R}$ be its integral closure.  Then
(\ref{st2}) translates into
\begin{enumerate}
  \setcounter{enumi}{2}
\item For any maximal ideal $\pp$ of $R$, there is a unique maximal
  ideal $\mm$ of $\widetilde{R}$ such that $\widetilde{R_\pp} =
  \widetilde{R}_\mm$.  This establishes a bijection between the
  maximal ideals of $R$ and the maximal ideals of $\widetilde{R}$.
\end{enumerate}
By commutative algebra, we reduce to the case where $R$ and $\tau$ are local:
\begin{enumerate}
  \setcounter{enumi}{3}
\item If $\tau$ is local, then $\tau$ has a unique V-topological coarsening.
\item \label{zloc} If $R$ is local, then $\widetilde{R}$ is a valuation ring.
\end{enumerate}
This cannot be proven purely by commutative algebra---the $W_2$-ring
of Warning~\ref{ww}(\ref{ww1}) is local, but its integral closure is
not.  We must use the fact that $R$ is a $\vee$-definable ring induced
by a topology.

Assume henceforth that $R$ and $\tau$ are local.  By pushing the
commutative algebra a little further, (\ref{zloc}) reduces to\ldots
\begin{enumerate}
  \setcounter{enumi}{5}
\item If $\Oo_1, \ldots, \Oo_n$ are the incomparable valuation rings
  whose intersection is $\widetilde{R}$, and $\mm_i$ is the maximal
  ideal of $\Oo_i$, then
  \[ \mm_i \cap R \centernot\subseteq \mm_j \cap R\]
  for $i \ne j$.
\end{enumerate}
Translated back into topology, this says the following
\begin{enumerate}
  \setcounter{enumi}{6}
\item If $\tau_1$ and $\tau_2$ are two distinct V-topological
  coarsenings of $\tau$, then the following \emph{can't} happen:
  $\tau_1$ induces a finer topology than $\tau_2$ on each
  $\tau$-bounded set.
\end{enumerate}
This makes intuitive sense, as $\tau_1$ and $\tau_2$ are independent
V-topologies.  To make this intuition precise, we need $\tau$-bounded
sets to be big enough:
\begin{enumerate}
  \setcounter{enumi}{7}
\item In the topology on $K^*$ induced by $\widetilde{R}$, the closure
  of $R$ includes the Jacboson radical $\Jac(\widetilde{R}) = \mm_1
  \cap \cdots \cap \mm_n$.
\end{enumerate}
In the case of DV-topologies, there was a trick to prove that $R$ is
dense in its integral closure $\Oo$; see \S 5.4 of \cite{prdf4}.  A
variant of that method works in our setting, but we need the following
configuration:
\begin{enumerate}
  \setcounter{enumi}{8}
\item \label{stepX} For any non-zero $a$ there are $y_1, \ldots, y_n$
  such that
  \begin{itemize}
  \item For each $i$, $ay_i$ is not in the $R$-module generated by
    $\{y_1,\ldots,\widehat{y_i},\ldots,y_n\}$.
  \item For fixed $i$, the elements $y_1, \ldots, y_n$ all have the
    same valuation with respect to $\Oo_i$.
  \end{itemize}
\end{enumerate}
Ignoring the factor of $a$, we need a sequence $y_1, \ldots, y_n$
which is $R$-independent in the sense of \S\ref{sec:conventions}, but
which is very far from being $\widetilde{R}$-independent---in fact
$y_i \in \widetilde{R} y_j$ for all $i, j$.  Such a sequence can be
obtained by scrambling an $R$-independent sequence by a random matrix
from $GL_n(\Qq)$:
\begin{enumerate}
  \setcounter{enumi}{9}
\item Let $R$ be a $W_n$-ring, extending $\Qq$ for simplicity.  Let $x_1, \ldots, x_n$
  be an $R$-independent sequence in $R$, and let $\vec{y} = \mu \cdot \vec{x}$,
  where $\mu$ is a ``random'' matrix from $GL_n(\Qq)$.
  \begin{itemize}
  \item If $R$ is local, then $\vec{y}$ is an $R$-independent
    sequence.
  \item If $R$ is a multi-valuation ring, then $y_i \in R y_j$ for all
    $i, j$.
  \end{itemize}
\end{enumerate}
Thus, local $W_n$-rings and multi-valuation rings behave differently,
and this difference can be leveraged to prove (\ref{stepX}).

\section{Scrambling and density} \label{sec:tech}
For the entirety of \S\ref{sec:tech}, we will work over a fixed
infinite field $K_0$.  All fields will extend $K_0$, all rings will be
$K_0$-algebras, and all valuations will be trivial on $K_0$.

\subsection{Scrambling} \label{sec:scramble}
\begin{lemma} \label{basic-scramble}
  Let $(K,\val_1,\ldots,\val_m)$ be a multi-valued field.
  For any $z, w \in K$, there is $c \in K_0$ such that for
  all $i$,
  \begin{equation}
    \val_i(z - cw) = \min(\val_i(z),\val_i(w)). \label{tri}
  \end{equation}
\end{lemma}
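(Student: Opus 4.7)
The plan is to reduce to a single-valuation analysis and then use the infinitude of $K_0$. For each $i$, I identify the \emph{bad} set $B_i \subseteq K_0$ of those $c$ for which the equality $\val_i(z - cw) = \min(\val_i(z),\val_i(w))$ fails, show that $|B_i| \le 1$, and conclude by choosing $c \in K_0 \setminus \bigcup_{i=1}^m B_i$, which exists because $K_0$ is infinite.

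To bound $|B_i|$, first observe that the inequality $\val_i(z - cw) \ge \min(\val_i(z),\val_i(cw))$ is the usual ultrametric one, and since $\val_i$ is trivial on $K_0$ we have $\val_i(cw) = \val_i(w)$ for every $c \in K_0^\times$. So equality in (\ref{tri}) is automatic whenever $\val_i(z) \ne \val_i(w)$ and $c \ne 0$. The edge cases $w = 0$ and $z = 0$ are immediate (the former imposes no constraint on $c$; the latter excludes only $c = 0$). Otherwise set $\alpha = z/w$; the target equality becomes $\val_i(\alpha - c) = \min(\val_i(\alpha), 0)$. When $\val_i(\alpha) < 0$, this holds for every $c \in K_0$. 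When $\val_i(\alpha) > 0$, it fails only at $c = 0$. When $\val_i(\alpha) = 0$, it fails precisely when $c$ and $\alpha$ have the same residue in the residue field $k_i$; because $\val_i$ is trivial on $K_0$, the residue map $K_0 \to k_i$ is injective, so at most one $c \in K_0$ is bad.

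In every case $|B_i| \le 1$, so $\left|\bigcup_{i=1}^m B_i\right| \le m$, and any element of $K_0$ outside this finite union works. I do not anticipate any real obstacle: the argument is the standard single-valuation ``avoid one residue'' trick run in parallel across the $m$ valuations, and the hypothesis that $K_0$ is infinite and trivially valued is exactly what is needed to let a single $c \in K_0$ avoid all $m$ exceptional residues at once.
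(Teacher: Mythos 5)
Your proof is correct and follows essentially the same route as the paper's: both reduce to $\alpha = z/w$, observe that equality in (\ref{tri}) can only fail when $\val_i(z)=\val_i(w)$ and $\res_i(z/w)$ coincides with the image of $c$ in the residue field, and use that $K_0$ injects into each $k_i$ (the valuations being trivial on $K_0$) so that at most $m$ values of $c$ are excluded. The paper packages the case analysis by extending $\res_i$ to take the value $\infty$ when $\val_i < 0$, but the content is identical.
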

\begin{proof}
  For each $i$, let $\res_i : K \to k_i \cup \{\infty\}$ be the
  residue map of $\val_i$, extended by setting $\res_i(x) = \infty$
  when $\val_i(x) < 0$.

  If $w = 0$, then any $c$ works.  Otherwise, take $c \in
  K_0$ such that $c \ne \res_i(z/w)$ for all $i$.  This is possible
  since $K_0$ is infinite.  If (\ref{tri}) fails, then by the strong
  triangle inequality
  \[ \val_i(z - cw) > \val_i(z) = \val_i(w).\]
  Then $\val_i(z/w - c) = \val_i(z - cw) - \val_i(w) > 0$.  This
  implies $\res_i(z/w) = \res_i(c) = c$, contradicting the choice of
  $c$.
\end{proof}
Let $(K,\val_1,\ldots,\val_m)$ be a multi-valued field.  Let $\vec{x}$
be an $n$-tuple in $K^n$.  Say that $\vec{x}$ is \emph{scrambled} if
\begin{align*}
  \val_1(x_1) & = \val_1(x_2) = \cdots = \val_1(x_n) \\
  \val_2(x_1) & = \val_2(x_2) = \cdots = \val_2(x_n) \\
  \cdots \\
  \val_m(x_1) & = \val_m(x_2) = \cdots = \val_m(x_n).
\end{align*}
If $\mu \in GL_n(K)$, say that $\mu$ \emph{scrambles} $\vec{x}$ if
$\mu \cdot \vec{x}$ is scrambled.
\begin{lemma}\label{scramble-weak}
  Let $(K,\val_1,\ldots,\val_m)$ be a multi-valued field.  Then any
  $n$-tuple $\vec{x} \in K^n$ can be scrambled by an element of
  $GL_n(K_0)$.
\end{lemma}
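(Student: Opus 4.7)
The plan is to exploit the fact that each $\val_j$ is trivial on $K_0$, so $K_0 \hookrightarrow k_j$ for each residue field $k_j$, and ``scrambled'' becomes a residue-field condition on $K_0$-linear combinations. Concretely, set $v_j := \min_l \val_j(x_l)$ and choose $l_j$ with $\val_j(x_{l_j}) = v_j$. For any row $\vec{a} = (a_1,\ldots,a_n) \in K_0^n$, factoring out $x_{l_j}$ gives
\[
  \val_j\!\left(\sum_l a_l x_l\right) = v_j + \val_j\!\left(\sum_l a_l \frac{x_l}{x_{l_j}}\right),
\]
and since $\res_j(a_l) = a_l$, the residue of the parenthesized sum equals $\sum_l a_l b_{j,l}$, where $b_{j,l} := \res_j(x_l/x_{l_j}) \in k_j$. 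So $\val_j(\sum_l a_l x_l) = v_j$ exactly when $\sum_l a_l b_{j,l} \ne 0$ in $k_j$. Because $b_{j,l_j} = 1$, the vanishing locus $H_j := \{\vec{a} \in K_0^n : \sum_l a_l b_{j,l} = 0\}$ is a \emph{proper} $K_0$-linear subspace of $K_0^n$. (Under this reformulation, Lemma~\ref{basic-scramble} is just the $n=2$ instance, where avoiding $H_j$ amounts to choosing $c \ne \res_j(z/w)$.)

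To build $\mu \in GL_n(K_0)$ with $\mu\vec{x}$ scrambled, I would construct the rows $\mu_1, \ldots, \mu_n$ inductively: having chosen $\mu_1, \ldots, \mu_{k-1}$, pick $\mu_k \in K_0^n$ avoiding $H_1 \cup \cdots \cup H_m \cup \mathrm{span}_{K_0}(\mu_1,\ldots,\mu_{k-1})$. Avoiding $H_1, \ldots, H_m$ guarantees that the $k$th coordinate $(\mu\vec{x})_k = \sum_l \mu_{kl} x_l$ has valuation exactly $v_j$ under each $\val_j$, hence that the final tuple $\mu\vec{x}$ is scrambled. Avoiding the span of previous rows keeps the rows $K_0$-linearly independent, so $\mu \in GL_n(K_0)$. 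At each stage we are excluding at most $m+1$ proper $K_0$-subspaces of $K_0^n$, and a vector space over an infinite field is never the union of finitely many proper subspaces, so such a $\mu_k$ always exists.

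There is no real obstacle: the whole argument depends only on triviality of the $\val_j$ on $K_0$ (to identify $K_0$ with a subfield of each $k_j$) and on $K_0$ being infinite (to run the subspace-avoidance), both of which are standing hypotheses for \S\ref{sec:tech}. The only minor subtlety is to verify that each $H_j$ is indeed proper, which follows because the standard basis vector $e_{l_j}$ satisfies $\sum_l (e_{l_j})_l b_{j,l} = b_{j,l_j} = 1 \ne 0$.
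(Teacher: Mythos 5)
Your proof is correct, and it takes a genuinely different (more direct) route than the paper's. The paper proceeds by descent: it defines the ``discrepancy'' of a tuple (the number of pairs $(i,j)$ for which $x_j$ fails to achieve the minimal $\val_i$-valuation), takes a tuple of minimal discrepancy in the orbit $GL_n(K_0)\cdot\vec{x}$, and uses Lemma~\ref{basic-scramble} as an elementary shear $x_1 \mapsto x_1 - cx_2$ to strictly reduce a positive discrepancy, a contradiction. You instead characterize the scrambling condition globally: after normalizing by an element $x_{l_j}$ of minimal $\val_j$-valuation, the condition $\val_j(\sum_l a_l x_l) = v_j$ becomes non-vanishing of a nonzero $K_0$-linear functional $\vec{a} \mapsto \sum_l a_l b_{j,l}$ with values in $k_j$ (using that $\val_j$ is trivial on $K_0$, so $K_0$ embeds in $k_j$ and $\res_j(a_l)=a_l$), and you build the $n$ rows of $\mu$ in one pass, each avoiding at most $m+1$ proper $K_0$-subspaces of $K_0^n$; this is possible because a vector space over an infinite field is not a finite union of proper subspaces. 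Both arguments ultimately rest on the same resource (infiniteness of $K_0$ used to dodge finitely many bad residues), and your reformulation correctly subsumes Lemma~\ref{basic-scramble} as the $n=2$ case, so nothing from the paper's toolkit is bypassed illegitimately. Your version arguably gives more: it exhibits the set of scrambling matrices as a Zariski-dense subset of $GL_n(K_0)$, although for Lemma~\ref{scramble-strong} one still needs the compactness step since the excluded subspaces depend on $\vec{x}$. The only point worth adding is the degenerate case where some $x_l$ vanish: if $\vec{x}=\vec{0}$ any $\mu$ works, and otherwise your argument goes through verbatim with $b_{j,l}=0$ for the vanishing entries.
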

\begin{proof}
  This follows by repeated applications of Lemma~\ref{basic-scramble}.
  In more detail, define the ``discrepancy'' of $\vec{x}$ to be the size
  of the set
  \[ \{(i,j) ~|~ \exists k : \val_i(x_j) > \val_i(x_k)\}.\]
  We may assume that $\vec{x}$ has minimal discrepancy in the coset
  $GL_n(K_0) \cdot \vec{x}$.  If $\vec{x}$ has discrepancy 0, then it
  is scrambled and we are done.  Otherwise, we may assume
  \[ \val_1(x_1) > \val_1(x_2).\]
  By Lemma~\ref{basic-scramble} there is $c \in K_0$ such that for all $i$,
  \[ \val_i(x_1 - cx_2) = \min(\val_i(x_1),\val_i(x_2)).\]
  Let $\vec{y} = (x_1 - cx_2, x_2, x_3, \ldots, x_n)$.  Then $\vec{y}
  \in GL_n(K_0) \cdot \vec{x}$, and $\vec{y}$ has lower discrepancy, a
  contradiction.
\end{proof}
\begin{lemma}\label{scramble-strong}
  For every $n, m$, there is a finite set $G_{n,m} \subseteq
  GL_n(K_0)$ with the following property.  Let $K$ be a field, and let
  $\val_1, \ldots, \val_m$ be valuations on $K$.  Then every $n$-tuple
  $\vec{x} \in K^n$ is scrambled by an element of $G_{n,m}$.
\end{lemma}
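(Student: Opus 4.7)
The plan is to upgrade Lemma~\ref{scramble-weak} by extracting a uniform finite family from its constructive proof. The key observation is that the proof of the weak lemma terminates after at most $nm$ elementary row operations (since the initial ``discrepancy'' is bounded by $nm$ and strictly decreases), and at each operation one need only choose $c \in K_0$ avoiding at most $m$ forbidden residues. Since $K_0$ is infinite, I can fix in advance a single finite set $C \subseteq K_0$ with $|C| = m + 1$, and then $C$ will contain a usable $c$ at every step of every instance.

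Concretely, I would set
\[ G_{n,m} = \left\{ \prod_{t=1}^{T} (I + c_t E_{j_t, k_t}) \,:\, 0 \le T \le nm,\ c_t \in C,\ j_t \ne k_t \in \{1,\ldots,n\} \right\} \subseteq GL_n(K_0), \]
which is finite by construction. Given any $(K, \val_1, \ldots, \val_m)$ and $\vec{x} \in K^n$, I would re-run the discrepancy-reduction loop from the proof of Lemma~\ref{scramble-weak}: at each iteration, locate $(i, j, k)$ with $\val_i(x_j) > \val_i(x_k)$, and apply Lemma~\ref{basic-scramble} restricted to $C$, where by a counting argument we are guaranteed some $c_t \in C$ with $c_t \ne \res_i(x_j/x_k)$ for every $i$ (at most $m$ forbidden values, but $|C| > m$). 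The discrepancy analysis from Lemma~\ref{scramble-weak} terminates this process in at most $nm$ steps, and the cumulative matrix $\prod_t (I + c_t E_{j_t, k_t})$ then lies in $G_{n,m}$ by construction and scrambles $\vec{x}$.

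The only mildly delicate point is verifying the ``at most $m$ forbidden values'' claim: because the valuations are trivial on $K_0$, the residue maps are injective on $K_0$, so each residue $\res_i(x_j/x_k)$ contributes at most one forbidden element of $K_0$ (and contributes none when $\res_i(x_j/x_k) = \infty$ or lies outside the image of $K_0$ in the residue field). I do not expect a serious obstacle here, as the lemma is essentially a pigeonhole-plus-uniformity refinement of Lemma~\ref{scramble-weak}; in a pinch, a soft compactness argument over the first-order theory of $m$-valued fields trivial on $K_0$ would produce the finite $G_{n,m}$ directly, without the explicit combinatorial bound.
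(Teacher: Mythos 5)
Your proposal is correct, but it takes a genuinely different route from the paper. The paper's entire proof is ``Lemma~\ref{scramble-weak} and compactness'': one assumes that for each finite $G \subseteq GL_n(K_0)$ there is a counterexample $(K,\val_1,\ldots,\val_m,\vec{x})$, and a compactness argument in the language of $m$-fold valued fields over $K_0$ produces a single multi-valued field in which no element of $GL_n(K_0)$ scrambles $\vec{x}$, contradicting Lemma~\ref{scramble-weak}. This is exactly the ``soft'' argument you relegate to your final sentence. Your main argument instead effectivizes the proof of Lemma~\ref{scramble-weak}: the discrepancy is bounded by $nm$, each elementary operation only needs $c$ to avoid at most $m$ residues, and since the valuations are trivial on $K_0$ (a standing convention of \S 3) each forbidden residue rules out at most one element of $K_0$, so a fixed $(m+1)$-element set $C \subseteq K_0$ suffices uniformly. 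What your version buys is an explicit, computable $G_{n,m}$ with a bound on its size and on the entries of its elements; what the paper's version buys is brevity and freedom from bookkeeping. One detail you should make explicit: for the discrepancy to \emph{strictly} decrease at each step (which your $nm$-step termination bound requires), you must choose the pivot $k$ so that $\val_i(x_k)$ is minimal among $\val_i(x_1),\ldots,\val_i(x_n)$; with an arbitrary $k$ satisfying only $\val_i(x_j) > \val_i(x_k)$, the discrepancy set can only shrink or stay the same, and examples with three distinct values of $\val_i$ show it can indeed stay the same. With the minimal choice of $k$, the pair $(i,j)$ leaves the discrepancy set and nothing enters it, so the bound holds. (A sign adjustment is also needed, since the loop replaces $x_j$ by $x_j - c x_k$ while your generators are $I + c_t E_{j_t,k_t}$; replacing $C$ by $C \cup (-C)$ disposes of this.) Neither point is a gap of substance.
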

\begin{proof}
  Lemma~\ref{scramble-weak} and compactness.
\end{proof}
\begin{lemma}\label{spinaround}
  Let $R$ be a local integral domain.  If $x_1, \ldots, x_n \in R$ is
  an $R$-independent sequence in $\Frac(R)$, and $\vec{y} \in GL_n(K_0) \cdot
  \vec{x}$, then $y_1, \ldots, y_n$ is also $R$-independent.
\end{lemma}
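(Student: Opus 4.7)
The plan is to argue by contradiction, pulling a hypothetical $R$-dependence of $\vec{y}$ back through the matrix $\mu \in GL_n(K_0)$ and exploiting the locality of $R$ to produce a dependence among the $x_i$.

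First, suppose $\vec{y}$ is $R$-dependent, so that some $y_i$ lies in the $R$-module generated by the remaining $y_j$. Rephrasing, there is a row vector $v \in R^n$ with $v_i = 1$ and $v \cdot \vec{y} = 0$ in $\Frac(R)$. Substituting $\vec{y} = \mu \vec{x}$ and setting $w := v\mu \in R^n$, one obtains the scalar relation $\sum_{j=1}^n w_j x_j = 0$.

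The key observation is that because $\mu^{-1}$ also has entries in $K_0 \subseteq R$, we have $v = w \mu^{-1}$, so each entry of $v$ is an $R$-linear combination of entries of $w$. If every $w_j$ lay in the maximal ideal $\mm$ of $R$, then every entry of $v$ would lie in $\mm$; but $v_i = 1 \notin \mm$, a contradiction. Hence some $w_k \notin \mm$, so by locality $w_k \in R^\times$. Solving the relation $\sum_j w_j x_j = 0$ for $x_k$ then writes $x_k$ as an $R$-combination of the remaining $x_j$, contradicting the $R$-independence of $\vec{x}$.

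I do not anticipate a real obstacle here: the argument is a short commutative-algebra manipulation. The only subtle ingredients are the use of locality to upgrade ``not all $w_j$ lie in $\mm$'' to ``some $w_k$ is a unit,'' and the hypothesis $K_0 \subseteq R$ to ensure $\mu^{-1} \in GL_n(R)$. The latter is precisely what makes the scrambling strategy of \S\ref{sec:scramble} compatible with $R$-independence in the local case, and it explains the contrast with the multi-valuation ring case mentioned in the outline of \S\ref{sec:where}.
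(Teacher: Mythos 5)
Your proof is correct. You argue by contraposition on relation vectors: an $R$-dependence $v\cdot\vec{y}=0$ with $v_i=1$ transforms under $\mu$ into a relation $w\cdot\vec{x}=0$ with $w=v\mu\in R^n$, and since $v=w\mu^{-1}$ with $\mu^{-1}\in GL_n(K_0)\subseteq GL_n(R)$, not all $w_k$ can lie in $\mm$; locality then upgrades some $w_k$ to a unit, which you solve for to contradict the $R$-independence of $\vec{x}$. The paper instead passes to the module $M$ generated by the $x_i$ and uses Nakayama's lemma twice: first to see $\dim_k M/\mm M = n$ from the $R$-independence of $\vec{x}$, then to see that a minimal generating subset of $\{y_1,\ldots,y_n\}$ (which is automatically $R$-independent) must already have $n$ elements. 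Both arguments exploit locality in the same essential way --- yours via ``outside $\mm$ means unit,'' the paper's via Nakayama --- but yours is the more elementary and self-contained computation, while the paper's makes the underlying dimension count over the residue field explicit. You also correctly identify the one hypothesis doing the work, namely that the standing convention ``all rings are $K_0$-algebras'' puts $GL_n(K_0)$ inside $GL_n(R)$. No gaps.
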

\begin{proof}
  Let $\mm$ be the maximal ideal of $R$, and $k$ be the residue field
  $R/\mm$.  Let $M$ be the $R$-submodule of $\Frac(R)$ generated by
  the $x_i$.  Let $\xi_i$ be the image of $x_i$ in the $k$-vector
  space $M/\mm M$.  The $\xi_i$ generate $M/\mm M$, and are
  $R$-independent by Nakayama's lemma.  Therefore $M / \mm M$ has
  dimension $n$ over $k$.  Write $\vec{y}$ as $\mu \cdot \vec{x}$ for
  some $\mu \in GL_n(K_0)$.  Then $y_1, \ldots, y_n$ also generate
  $M$.  Let $\{z_1,\ldots,z_m\}$ be a minimal subset of
  $\{y_1,\ldots,y_n\}$ generating $M$.  Then $\vec{z}$ is
  $R$-independent.  The Nakayama's lemma argument shows $m = \dim_k
  M/\mm M = n$.  Therefore $\{y_1,\ldots,y_n\} = \{z_1,\ldots,z_m\}$,
  and the $\vec{y}$ are $R$-independent.
\end{proof}

\begin{lemma}\label{re-slide}
  Let $R$ be an integral domain of finite weight.  Suppose $R \ne
  \Frac(R) =: K$, and the induced topology on $R$ is local of weight
  $n$.  Let $\widetilde{R}$ be the integral closure of $R$.  Write
  $\widetilde{R}$ as $\Oo_1 \cap \cdots \cap \Oo_m$, where the $\Oo_i$ are
  pairwise incomparable valuation rings.  Let $\val_i$ be the
  valuation associated to $i$.  Then for any non-zero $a \in R$, we
  can find $y_1, \ldots, y_n \in K$ such that
  \begin{itemize}
  \item The tuple $\vec{y}$ is scrambled with respect to
    $(K,\val_1,\ldots,\val_n)$, i.e., $\val_i(y_j) = \val_i(y_k)$ for
    all $i, j, k$.
  \item For any $i$,
    \[ a \cdot y_i \notin R \cdot y_1 + R \cdot y_2 + \cdots + \widehat{R \cdot y_i} + \cdots + R \cdot y_n.\]
  \end{itemize}
\end{lemma}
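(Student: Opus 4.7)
The plan is to construct $\vec{y}$ as $\vec{y} = \mu \vec{x}$ for a suitably chosen $R$-independent tuple $\vec{x} \in R^n$ and a scrambling matrix $\mu \in GL_n(K_0)$. The three main tools are the weight hypothesis (giving cube-rank at least $n$, so that $R$-independent sequences of length $n$ exist in $K$), Lemma~\ref{scramble-weak} (producing the scrambling), and Lemma~\ref{spinaround} (preserving $R$-independence under the $GL_n(K_0)$-action, thanks to locality of $R$).

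First, extract from the weight hypothesis an $R$-independent sequence $x_1, \ldots, x_n \in K$ and clear denominators by a common factor in $R$ to place them in $R$. To accommodate the factor $a$, strengthen this initial choice so that also $a x_i \notin \sum_{j \neq i} R x_j$ for each $i$: this is automatic when $a \in R^\times$ (any $R$-dependence would rescale), and in the general case one builds the $x_i$'s inductively, using that $K_0$ is an infinite subfield of $R$ to sidestep the finitely many bad $R$-submodule relations that would spoil the $a$-condition at each step.

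Then Lemma~\ref{scramble-weak} furnishes $\mu \in GL_n(K_0)$ scrambling $\vec{x}$ with respect to $(\val_1, \ldots, \val_m)$; set $\vec{y} := \mu \vec{x}$. The scrambling condition $\val_k(y_i) = \val_k(y_j)$ is immediate, and Lemma~\ref{spinaround} yields the $R$-independence of $\vec{y}$. For the second condition $a y_i \notin \sum_{j \neq i} R y_j$, note that $\mu \in GL_n(K_0) \subseteq GL_n(R)$ (since $\det \mu \in K_0^\times \subseteq R^\times$), so both tuples generate the same $R$-submodule $N \subseteq K$; a Nakayama-style argument paralleling Lemma~\ref{spinaround}, applied in the appropriate quotients of $N$ that track the $a$-relations, should carry the strengthened property from $\vec{x}$ to $\vec{y}$. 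The main obstacle is the inductive construction in the previous paragraph: the obstructions to be avoided at each step are $R$-submodules of $K$ (for both the original $R$-independence and the $a$-strengthening), and one must verify these can be sidestepped simultaneously using the infinitude of $K_0$ — this is where the assumption $|K_0|=\infty$ from the start of \S\ref{sec:tech} pays off.
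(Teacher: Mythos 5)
There is a genuine gap, and it is precisely the point the lemma is designed to navigate. Your argument applies Lemma~\ref{spinaround} to $R$ itself, "thanks to locality of $R$" --- but $R$ is \emph{not} assumed to be a local ring. The hypothesis is only that the induced \emph{topology} is local, and Warning~\ref{ww} shows these conditions are independent of one another: a non-local ring can induce a local topology and vice versa. So $GL_n(K_0)$ need not preserve $R$-independence, and your construction of $\vec{y}=\mu\vec{x}$ has no reason to satisfy the second bullet. The paper's proof exists exactly to bridge this: it passes to an ``ultrapower'' $K^*$ and works with the $\vee$-definable ring $R'$ induced by $\tau_R$, which \emph{is} local (Proposition~\ref{local-dict}) and has weight $n$. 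One takes an $R'$-independent $n$-tuple in $K^*$, notes that every $G_{n,m}$-translate remains $R'$-independent by Lemma~\ref{spinaround} applied to $R'$, and then transfers the existence of such a tuple back down to $K$ by elementarity --- this is why the finite set $G_{n,m}$ of Lemma~\ref{scramble-strong} is needed rather than Lemma~\ref{scramble-weak}, since the whole configuration must be expressed by a single first-order formula over $K$.

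A secondary weakness is your treatment of the factor $a$. The proposed inductive construction forcing $a x_i \notin \sum_{j\ne i} R x_j$, and the ``Nakayama-style argument in appropriate quotients'' meant to carry this condition through $\mu$, are both unsubstantiated; the condition is not a statement about the module $\sum_j R y_j$ alone but about its specific generators, and it is not clear it survives a change of generating set even over a local ring. The paper dissolves this issue for free: since $a^{-1} \in K \subseteq R'$, we have $\sum_{j \ne i} R^* a^{-1} y_j \subseteq \sum_{j\ne i} R' y_j$, so $R'$-independence already implies the $a$-strengthened condition with no separate bookkeeping. Your identification of the scrambling lemmas and the general shape (scramble an independent tuple by a matrix over $K_0$) is correct, but without the ultrapower step the proof does not go through.
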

\begin{proof}
  Let $K^*$ be an ``ultrapower'' of $K$, and let $R'$ be the subring
  of $K^*$ induced by $\tau_R$.  Note that $R'$ is a local ring of
  weight $n$, and $R' \supseteq R^*$.  Let $G_{n,m}$ be as in
  Lemma~\ref{scramble-strong}.  Let $\phi(x_1,\ldots,x_n)$ be the
  formula in $K^*$ expressing that for any $\vec{y} \in G_{n,m} \cdot
  \vec{x}$, we have
  \[ \forall i : \left(y_i \notin \sum_{j \ne i} R^* \cdot a^{-1} y_j\right).\]
  \begin{claim}
    Some tuple $\vec{z} \in (K^*)^n$ satisfies $\phi(\vec{x})$.
  \end{claim}
  \begin{claimproof}
    Because $R'$ has weight $n$, there is an $R'$-independent sequence
    $z_1,\ldots,z_n$.  We claim that $\phi(\vec{z})$ holds.  If
    $\vec{y} \in G_{n,m} \cdot \vec{z} \subseteq GL_n(K_0) \cdot
    \vec{z}$, then $\vec{y}$ is $R'$-independent by
    Lemma~\ref{spinaround}.  Therefore
    \[ y_i \notin \sum_{j \ne i} R' \cdot y_j.\]
    But $R^* \cup \{a^{-1}\} \subseteq R^* \cup K \subseteq R'$.  Therefore
    \[ y_i \notin \sum_{j \ne i} R^* \cdot a^{-1} \cdot y_j. \qedhere\]
  \end{claimproof}
  Now $\phi(x_1,\ldots,x_n)$ is a formula over $K$, so it is satisfied
  by some tuple $z_1,\ldots,z_n \in K^n$.  By choice of $G_{n,m}$ in
  Lemma~\ref{scramble-strong}, there is at least one $\vec{y} \in
  G_{n,m} \cdot \vec{z}$ such that $\vec{y}$ is scrambled.  By
  definition of $\phi$, the vector $\vec{y}$ has the desired
  properties.
\end{proof}

\subsection{Density in the Jacobson radical}
\begin{lemma}\label{jac-den}
  Let $R$ be a local integral domain of weight $n$, inducing a
  W-topology on $K = \Frac(R)$ that is also local of weight $n$.  Let
  $\widetilde{R}$ be the integral closure of $R$.  Then $R$ is dense
  in the Jacobson radical of $\widetilde{R}$, with respect to the
  topology induced by $\widetilde{R}$.
\end{lemma}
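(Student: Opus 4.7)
Given $x \in \Jac(\widetilde{R})$ and $c \in K^\times$, the goal is to produce $r \in R$ with $x - r \in c\widetilde{R}$. I would apply Lemma~\ref{re-slide} to obtain a scrambled, $R$-independent $n$-tuple $y_1, \ldots, y_n \in K$ satisfying the ``$a$-condition'' $a y_i \notin \sum_{j \ne i} R y_j$ for an appropriately chosen nonzero $a \in R$, then leverage the weight-$n$ hypothesis on $R$ to force a useful dependence involving $x$.

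\textbf{Main case analysis.} Since scrambling makes $y_i / y_j \in \widetilde{R}^\times$, a single rescaling in $K^\times$ places all $y_i$ in $c\widetilde{R}$; this preserves both scrambling and the $a$-condition. The $y_i$ are automatically $R$-independent, since any $R$-relation among them, multiplied by $a$, would contradict the $a$-condition. Because $R$ has weight $n$, the $(n+1)$-element sequence $x, y_1, \ldots, y_n$ must be $R$-dependent, yielding two cases: (A) $x \in \sum_j R y_j \subseteq c\widetilde{R}$, in which case $r = 0$ works; (B) after relabeling, $y_1 = r_0 x + \sum_{j \ge 2} r_j y_j$ with $r_0 \in R \setminus \{0\}$, so that $r_0 x = y_1 - \sum_{j \ge 2} r_j y_j \in c\widetilde{R}$. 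In case (B), if $r_0$ happens to lie in $\widetilde{R}^\times$, then $r_0^{-1} \in \widetilde{R}$, so $x = r_0^{-1}(r_0 x) \in r_0^{-1} c\widetilde{R} = c\widetilde{R}$, and again $r = 0$ suffices.

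\textbf{Main obstacle.} The technical difficulty is the remaining subcase of (B) where $r_0 \notin \widetilde{R}^\times$, meaning $\val_i(r_0) > 0$ for some $i$. Writing $y_j = u_j y_1$ with $u_j \in \widetilde{R}^\times$, the relation reduces to $r_0 x = (1 - v) y_1$ where $v = \sum_{j \ge 2} r_j u_j \in \widetilde{R}$, and the $a$-condition translates to $a \notin \sum_{j \ge 2} R u_j$ inside $\widetilde{R}$. My intended strategy is to pre-select $a \in R$ satisfying $ax \in c\widetilde{R}$ (such $a$ exists because $(c/x)\widetilde{R} \cap R$ is a nonzero $R$-submodule of $K$), and argue that if $r_0 \in \mm_i$ then the product $ar_0 x$ sits deep enough in $c\widetilde{R}$ that an expression for $ay_1$ as an $R$-combination of $y_2, \ldots, y_n$ can be manufactured from the defining relation, contradicting the $a$-condition. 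This step uses the dichotomy emphasized in \S\ref{sec:where}: $R$ is local, so Lemma~\ref{spinaround} keeps $\vec{y}$ $R$-independent under the $GL_n(K_0)$-scrambling used to prove Lemma~\ref{re-slide}, while $\widetilde{R}$ is a multi-valuation ring, so the same scrambling collapses the $y_i$ into a single $\widetilde{R}$-cyclic module. If the direct contradiction proves elusive, the fallback is to transfer the density statement to the ultrapower $K^*$ via Proposition~\ref{p2} and Lemma~\ref{esimals}, reformulating it as a statement about the $\vee$-definable integral closure $\widetilde{R^*}$ and its maximal ideals, and exploit saturation there.
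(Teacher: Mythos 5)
There is a genuine gap, and it sits exactly where you flag it: the subcase of (B) with $r_0 \notin \widetilde{R}^\times$. Your relation $r_0 x = y_1 - \sum_{j\ge 2} r_j y_j$ only controls $r_0 x$, and neither of your proposed escapes works as stated: multiplying by $a$ gives $ay_1 = (ar_0)x + \sum_{j\ge2} r_j(ay_j)$, which contradicts the $a$-condition only if $(ar_0)x$ happens to lie in $\sum_{j\ge2} R y_j$, and there is no reason for that; and the ultrapower ``fallback'' is not an argument. The root of the problem is that in your dependence set $\{x, y_1,\dots,y_n\}$ the element $x$ can appear with an uncontrollable non-unit coefficient.

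The paper's proof avoids this by a different normalization and a different dependent set. One first chooses $a$ (before invoking Lemma~\ref{re-slide}, which is applied so as to yield $a^2 y_i \notin \sum_{j\ne i} Ry_j$) satisfying four conditions simultaneously: $a \in R$, $ax \in R$, $a \in \Jac(\widetilde{R})$, and $a\widetilde{R} \subseteq U$; this is possible because the first two are $\tau$-neighborhood conditions and the last two are $\widetilde{\tau}$-neighborhood conditions, with $\tau$ finer. One then rescales so that $y_1 = 1$ (not so that $y_i \in c\widetilde{R}$), whence all $y_i$ are $\widetilde{R}$-units, and tests the $(n+1)$-element set $\{1, x, ay_2,\dots,ay_n\}$ for $R$-dependence. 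Now each case is clean: $1$ cannot lie in the module generated by the others, since $x$, $a$ lie in $\Jac(\widetilde{R})$ and $R\subseteq \Oo_i$; if some $ay_j$ lies in the module generated by the rest, multiplying through by $a$ and using $a, ax, a^2 \in R$ lands $a^2y_j$ in $\sum_{k\ne j} Ry_k$, contradicting the $a^2$-condition; and if $x$ lies in the module generated by the rest, then tautologically $x = b + \sum_j ac_jy_j$ with coefficient $1$ on $x$, so $b - x \in a\widetilde{R} \subseteq U$ and $b \in (x+U)\cap R$ is the desired approximant. Including the constant $1$ in the set is precisely what eliminates your $r_0$ problem, and it is the step your proposal is missing.
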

\begin{proof}
  Let $\tau$ and $\widetilde{\tau}$ denote the topologies induced by $R$
  and $\widetilde{R}$.  Then $\widetilde{\tau}$ is coarser than $\tau$.

  Write $\widetilde{R}$ as an intersection of pairwise incomparable
  valuation rings $\Oo_1 \cap \cdots \cap \Oo_m$.  The Jacobson
  radical is $\mm_1 \cap \cdots \cap \mm_n$, where $\mm_i$ is the
  maximal ideal of $\Oo_i$.  Let $x$ be an element of the Jacobson
  radical of $\widetilde{R}$.  For any $U \in \widetilde{\tau}$, we must show
  that $x + U$ intersects $R$.

  By taking $a$ small enough with respect to $\tau$, we can find non-zero $a$ such that
  \begin{itemize}
  \item $a \in R$
  \item $ax \in R$
  \item $a$ is in the Jacobson radical $\mm_1 \cap \cdots \cap \mm_n$.
  \item $a \widetilde{R} \subseteq U$.
  \end{itemize}
  Indeed, the first two requirements cut out $\tau$-neighborhoods of
  0, the third and fourth requirements cut out
  $\widetilde{\tau}$-neighborhoods of 0, and $\tau$ is finer than
  $\widetilde{\tau}$.

  By Lemma~\ref{re-slide}, there are $y_1, \ldots, y_n \in K$ such
  that
  \begin{itemize}
  \item For every $j \le m$, we have $\val_j(y_1) = \val_j(y_2) = \cdots =
    \val_j(y_n)$.
  \item For every $i \le n$, we have
    \[ a^2 y_i \notin \sum_{j \ne i} R y_j.\]
  \end{itemize}
  Scaling the $\vec{y}$'s, we may assume $y_1 = 1$.  Then $\val_j(y_i)
  = 0$ for all $i, j$.  In particular, $y_i \in \widetilde{R}$.

  Now $\wt(R) < n + 1$, so the set $\{1,x,ay_2,ay_3,\ldots,ay_n\}$
  cannot be $R$-independent.  One of three things happens:
  \begin{itemize}
  \item $1$ is in the $R$-module generated by $x, ay_2, ay_3, \ldots,
    ay_n$.  But this cannot happen, as the elements $x, ay_2, ay_3,
    \ldots, ay_n$ have positive valuation with respect to any of the
    $\val_i$'s, and $R \subseteq \Oo_i$.
  \item Say, $ay_2$ is in the $R$-module generated by $1, x, ay_3,
    ay_4, \ldots, ay_n$.  Then
    \begin{align*}
      ay_2 & \in R + Rx + \sum_{j = 3}^n R ay_j \\
      a^2 y_2 & \in Ra + Rax + \sum_{j = 3}^n R a^2 y_j.
    \end{align*}
    But $a, ax \in R$, and $a^2 \in R$, so that
    \[ a^2 y_2 \in Ra + Rax + \sum_{j = 3}^n R a^2 y_j. \subseteq R + R + \sum_{j = 3}^n R y_j = R y_1 + \sum_{j = 3}^n R y_j = \sum_{j \ne 2} R y_j,\]
    contradicting the choice of the $y$'s.
  \item $x$ is in the $R$-module generated by $1, ay_2, ay_3, \ldots,
    ay_n$.  Then there are $b, c_2, \ldots, c_n \in R$ such that
    \[ x = b + ac_2y_2 + ac_3y_3 + \cdots + ac_ny_n.\]
    But the $c_i \in R \subseteq \widetilde{R}$, and the $y_i \in \widetilde{R}$, so we see
    \[ b - x \in a\widetilde{R} \subseteq U.\]
    Then $b \in (x + U) \cap R$. \qedhere
  \end{itemize}
\end{proof}

\begin{lemma}\label{tops-not-same}
  Let $R$ be a local integral domain of weight $n$, inducing a
  W-topology on $K = \Frac(R)$ that is also local of weight $n$.  Let
  $\widetilde{R}$ be the integral closure of $R$.  Write
  $\widetilde{R}$ as $\Oo_1 \cap \cdots \cap \Oo_m$ for incomparable
  valuation rings $\Oo_i$.  Suppose the $\Oo_i$ are pairwise
  independent.  Then there is $a \in K^\times$ such that for every $b
  \in K^\times$,
  \[ R \cap b\Oo_2 \not\subseteq a\Oo_1.\]
\end{lemma}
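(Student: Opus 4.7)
The plan is to choose $a \in \mm_1 \setminus \{0\}$ large enough that $\val_1(a)$ has room beneath it, and then, given any $b$, to produce an element $r \in R$ with $0 < \val_1(r) < \val_1(a)$ and $\val_2(r) \ge \val_2(b)$. Such an $r$ will lie in $(R \cap b\Oo_2) \setminus a\Oo_1$, which is exactly what we need. The two tools are the approximation theorem for pairwise independent valuations (to fabricate a witness $x$ in $K$ with the desired behavior at every $\val_i$) and Lemma~\ref{jac-den} (to approximate $x$ by an element of $R$).

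Concretely, I would fix any $a_0 \in \mm_1 \setminus \{0\}$---such $a_0$ exists because $\Oo_1 \ne K$, as $\Oo_1$ is incomparable with $\Oo_2$---and set $a := a_0^2$, so $\val_1(a) = 2\val_1(a_0) > \val_1(a_0) > 0$. Given $b \in K^\times$, the pairwise independence of the non-trivial valuations $\val_1,\ldots,\val_m$ lets me apply the strong approximation theorem to find $x \in K$ satisfying
\[ \val_1(x) = \val_1(a_0), \quad \val_2(x) \ge \max(1,\val_2(b)), \quad \val_i(x) \ge 1 \text{ for } i \ge 3. \]
In particular $x \in \mm_1 \cap \cdots \cap \mm_m = \Jac(\widetilde{R})$.

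Next I would push $x$ into $R$. By a second application of approximation, choose $c \in K^\times$ with $\val_i(c) > \val_i(x)$ for every $i$. By Lemma~\ref{jac-den}, $R$ is dense in $\Jac(\widetilde R)$ with respect to $\widetilde{\tau}$, and $c\widetilde R$ is a $\widetilde\tau$-neighborhood of $0$, so there is $r \in R$ with $r - x \in c\widetilde R \subseteq \bigcap_i c\Oo_i$. Hence $\val_i(r-x) \ge \val_i(c) > \val_i(x)$, forcing $\val_i(r) = \val_i(x)$ for every $i$. In particular $\val_1(r) = \val_1(a_0) < \val_1(a)$, so $r \notin a\Oo_1$, while $\val_2(r) = \val_2(x) \ge \val_2(b)$, so $r \in b\Oo_2$. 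Since $b$ was arbitrary, this proves the lemma.

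The main---though mild---obstacle is that Lemma~\ref{jac-den} only controls $R$ inside the Jacobson radical of $\widetilde R$, not inside $\widetilde R$ itself. That is exactly why $x$ must be chosen to have strictly positive valuation at \emph{every} $\val_i$, not merely at $\val_1$ and $\val_2$; and this in turn is where the full pairwise independence hypothesis on $\Oo_1,\ldots,\Oo_m$ gets used, since it is what makes strong approximation available to prescribe $\val_1(x),\ldots,\val_m(x)$ simultaneously.
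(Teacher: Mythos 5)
Your argument is correct and follows essentially the same route as the paper's: fix $a$ with $\val_1(a)$ strictly larger than some attainable positive value, use the approximation theorem for the pairwise independent $\val_i$ to build a witness in $\Jac(\widetilde{R})$ with the right valuation at $\val_1$ and $\val_2$, and then invoke Lemma~\ref{jac-den} to perturb it into $R$ without disturbing the valuations. Your version is slightly more explicit than the paper's about why the perturbation preserves $\val_i$ (choosing the neighborhood $c\widetilde{R}$ with $\val_i(c)>\val_i(x)$); the only cosmetic blemish is writing ``$\ge 1$'' for elements of an abstract value group, where you simply mean strictly positive.
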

\begin{proof}
  Let $\mm_i$ be the maximal ideal of $\Oo_i$.  Then $\bigcap_i \mm_i$
  is the Jacobson radical of $\widetilde{R}$.  Take non-zero $c \in
  \bigcap_i \mm_i$.  By the approximation theorem for V-topologies, we
  can find $a \in K$ such that
  \begin{align*}
    \val_1(a) &> \val_1(c) \\
    \val_i(a) &= \val_i(c), \text{ for } i > 1.
  \end{align*}
  Now, for every $b \in K^\times$, there is $u \in K^\times$ such that
  \begin{align*}
    \val_1(u) &= \val_1(c) > 0 \\
    \val_i(u) &> \max(\val_i(b),\val_i(c)) > 0, \text{ for } i > 1,
  \end{align*}
  by the approximation theorem for V-topologies.  Then $u \in
  \bigcap_i \mm_i$.  By Lemma~\ref{jac-den}, there are elements of $R$
  arbitrarily close to $u$ with respect to the
  $\widetilde{R}$-topology.  In particular, moving $u$, we can take $u
  \in R$.  Then $u \in R$ and $u \in b \Oo_2$, but $u \notin a \Oo_1$,
  since $\val_1(u) = \val_1(c) < \val_1(a)$.
\end{proof}

\section{Local components and V-topological coarsenings}

\subsection{The local case}
\begin{lemma}\label{tops2}
  Let $(K,\tau)$ be a local topological field of weight $n$.  Let
  $\tau_1, \tau_2$ be two distinct V-topological coarsenings of
  $\tau$.  Then for all $U \in \tau$ there is $V \in \tau_1$ such that
  for all $W \in \tau_2$,
  \[ U \cap W \not \subseteq V.\]
\end{lemma}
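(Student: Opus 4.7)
The plan is to pass to an ``ultrapower'' $K^*$ and reduce the statement to the ring-theoretic Lemma~\ref{tops-not-same}. First, observe that the claim
\[ \forall U \in \tau \; \exists V \in \tau_1 \; \forall W \in \tau_2 : U \cap W \not\subseteq V \]
is a local sentence: the innermost clause $U \cap W \not\subseteq V$ is equivalent to $\exists x : (x \in U) \wedge (x \in W) \wedge \neg(x \in V)$, so $U$ and $W$ occur positively while $V$ occurs negatively, making the quantifier pattern $\forall\tau\,\exists\tau_1\,\forall\tau_2$ admissible. By Proposition~\ref{p2}(\ref{p2e3}) it therefore suffices to verify the statement in $(K^*, \tau^*, \tau_1^*, \tau_2^*)$.

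Let $R$, $\Oo_1$, $\Oo_2$ denote the $\vee$-definable subrings of $K^*$ corresponding to $\tau$, $\tau_1$, $\tau_2$ (Proposition~\ref{p1}). Each of these rings defines the associated topology on $K^*$, so $\{cR\}$, $\{a\Oo_1\}$, $\{b\Oo_2\}$ are filter bases for $\tau^*, \tau_1^*, \tau_2^*$ (with $c, a, b$ ranging over $(K^*)^\times$). Since local sentences can be evaluated on a filter basis, the claim in $K^*$ becomes: for every $c$ there is $a$ such that $cR \cap b\Oo_2 \not\subseteq a\Oo_1$ for every $b$. Rescaling by $c^{-1}$ (replace $a$ by $ca_0$ and $b$ by $cb_0$) reduces this to the case $c = 1$: we need some $a \in (K^*)^\times$ with $R \cap b\Oo_2 \not\subseteq a\Oo_1$ for all $b \in (K^*)^\times$.

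This is exactly the conclusion of Lemma~\ref{tops-not-same}, applied in $K^*$ to the ring $R$ with integral closure $\widetilde{R} = \Oo_1 \cap \cdots \cap \Oo_m$, taking the fixed base field of \S\ref{sec:tech} to be $K$ itself (infinite, and contained in every $\Oo_i$). The required hypotheses are supplied by \S\ref{sec:machine}: $R$ is a local integral domain of weight $n$ with induced topology local of weight $n$ (Propositions~\ref{wdict-1} and \ref{local-dict}); $R \subsetneq \Frac(R) = K^*$ (Proposition~\ref{p1}); and $\widetilde{R}$ decomposes into pairwise incomparable valuation rings $\Oo_i$ matching the V-topological coarsenings of $\tau$ (Proposition~\ref{v-coarses}).

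The one nontrivial hypothesis is pairwise independence of the $\Oo_i$, and I expect this to be the main obstacle. The idea is as follows: if some $\Oo_i, \Oo_j$ were incomparable but dependent, their compositum $\Oo_i \cdot \Oo_j$ would be a proper valuation ring strictly containing $\Oo_i$. This compositum is the image of a $\vee$-definable set under a definable map, hence $\vee$-definable; and it has finite orbit under $\Aut(K^*/K)$ (the collection of pair-compositums is finite), so Lemma~\ref{almost} makes it $\vee$-definable over $K$. Since it also contains $R$, Proposition~\ref{v-coarses} would force it to be one of the $\Oo_k$, contradicting pairwise incomparability. Once independence is established, Lemma~\ref{tops-not-same} produces the required $a$, and everything else is a mechanical transfer through the local-sentence dictionary.
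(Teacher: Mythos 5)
Your proof is correct and follows essentially the same route as the paper's: express the claim as a local sentence, transfer to the ``ultrapower'' via Proposition~\ref{p2}(\ref{p2e3}), identify the filter bases $cR$, $a\Oo_1$, $b\Oo_2$, rescale to $c=1$, and invoke Lemma~\ref{tops-not-same}. The only place you go beyond the paper is in explicitly checking the pairwise-independence hypothesis of Lemma~\ref{tops-not-same}, which the paper leaves implicit; your argument for it is sound, and can even be shortened: two dependent non-trivial valuation rings induce the same topology, so two dependent $\Oo_i, \Oo_j$ (both $\vee$-definable over $K$) would be co-embeddable and hence equal by Lemma~\ref{duh}, contradicting incomparability.
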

\begin{proof}
  The desired condition can be expressed as a local sentence in
  $(K,\tau,\tau_1,\tau_2)$.  Let $K^*$ be an ``ultrapower'' of $K$.
  Let $R, R_1, R_2$ be the rings induced by $\tau, \tau_1, \tau_2$,
  and let $\tau^*, \tau_1^*, \tau_2^*$ be the corresponding topologies
  on $K^*$.  Then $(K^*,\tau^*,\tau_1^*,\tau_2^*)$ is locally
  equivalent to $(K,\tau,\tau_1,\tau_2)$, by Proposition~\ref{p2}.  So
  we may work in $K^*$ instead.  Then $\tau^*$ is induced by $R$, and
  $\tau^*_i$ is induced by $R_i$.  Note that $R$ is a local ring of
  weight $n$, inducing a local topology of weight $n$.  By
  Proposition~\ref{v-coarses}, $R_i$ is one of the valuation rings
  whose intersection is the integral closure $\widetilde{R}$.  We must
  show
  \[ \forall c \ne 0 ~ \exists a \ne 0 ~ \forall b \ne 0 : cR \cap bR_2 \not \subseteq aR_1.\]
  Up to rescaling, we may assume $c = 1$, and then this is the content
  of Lemma~\ref{tops-not-same} (with $K$ and $K_0$ being $K^*$ and
  $K$, respectively).
\end{proof}

\begin{lemma}\label{incomparable}
  Let $(K,\tau)$ be a local topological field of weight $n$.  Let
  $K^*$ be an ``ultrapower'' of $K$.  Let $R$ be the subring of $K^*$
  induced by $\tau$.  Let $\widetilde{R}$ be the integral closure of $R$,
  and let $\Oo_1 \cap \cdots \cap \Oo_n$ be its decomposition into
  incomparable valuation rings.  Let $\mm_i$ be the maximal ideal of
  $\Oo_i$.  Then
  \[ \mm_i \cap R \not \subseteq \mm_j \cap R,\]
  for $i \ne j$.
\end{lemma}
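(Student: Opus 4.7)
My plan is to reduce the claim to Lemma~\ref{tops2} via the dictionary between the valuation rings $\Oo_i$ and the V-topological coarsenings of $\tau$ on $K$. By Proposition~\ref{v-coarses} together with Proposition~\ref{wdict-2}(3) and Proposition~\ref{p2}, each $\Oo_i$ equals $R_{\sigma_i^*}$ for a V-topological coarsening $\sigma_i$ of $\tau$ on $K$, and the $\sigma_i$ are pairwise distinct because the $\Oo_i$ are pairwise unequal (with Lemma~\ref{duh} ensuring the correspondence is injective). Moreover, by Lemma~\ref{esimals}, $\mm_i = \bigcap_{W \in \sigma_i} W^*$ is the set of $\sigma_i^*$-infinitesimals in $K^*$.

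Fix $i \ne j$ and fix a bounded neighborhood $U_0 \in \tau \cap \tau^\perp$, which exists by local boundedness. I will realize the partial type $p(x)$ over $K$ asserting $x \in U_0^*$, $x \in W^*$ for every $W \in \sigma_i$, and $x \notin V^*$ for some single $V \in \sigma_j$ to be chosen in advance. Any realization $x \in K^*$ of $p$ then lies in $R$ (since $U_0$ is bounded), in $\mm_i$ (by Lemma~\ref{esimals}), and outside $\mm_j$ (since $\mm_j \subseteq V^*$), exhibiting the desired element of $\mm_i \cap R$ not lying in $\mm_j \cap R$.

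The main obstacle, and really the only nontrivial step, is choosing $V$ so that $p$ is finitely consistent: I need a single $V \in \sigma_j$ with $U_0 \cap W \not\subseteq V$ for every $W \in \sigma_i$. This is exactly what Lemma~\ref{tops2} supplies, applied with $\tau_1 = \sigma_j$, $\tau_2 = \sigma_i$, and $U = U_0$. Once $V$ is fixed, any finite subtype of $p$ mentions only $W_1,\ldots,W_k \in \sigma_i$; their intersection $W'$ again lies in $\sigma_i$, so the lemma produces an element of $(U_0 \cap W') \setminus V$ that realizes the finite piece already in $K$. Saturation of $K^*$ then yields the required $x$, completing the proof.
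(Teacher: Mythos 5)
Your proposal is correct and follows essentially the same route as the paper: the paper also fixes a bounded neighborhood $U$, invokes Lemma~\ref{tops2} to obtain a single $V \in \tau_j$ with $U \cap W \not\subseteq V$ for all $W \in \tau_i$, and then uses saturation together with Lemma~\ref{esimals} to produce an element of $U^* \cap \bigcap_{W \in \tau_i} W^*$ outside $V^*$, which lies in $\mm_i \cap R$ but not in $\mm_j$. The only difference is that you spell out the finite-consistency check and the identification of the $\Oo_i$ with V-topological coarsenings in slightly more detail, which the paper leaves implicit.
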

\begin{proof}
  Let $\tau_i$ be the topology corresponding to $\Oo_i$.  Let $U \in
  \tau$ be a bounded neighborhood.  Let $i, j$ be given.  By
  Lemma~\ref{tops2}, there is $V \in \tau_j$ such that for all $W \in
  \tau_i$,
  \[ U \cap W \not \subseteq V.\]
  By saturation, there is $\epsilon \in U^* \cap \bigcap_{W \in
    \tau_i} W^*$ with $\epsilon \notin V^*$.  Then $\epsilon$ is a
  $\tau_i$-infinitesimal, so $\epsilon \in \mm_1$ by
  Lemma~\ref{esimals}.  Additionally, $\epsilon \in U^*$, so
  $\epsilon$ is $\tau$-bounded, and $\epsilon \in R$.  On the other
  hand, $\epsilon \notin V^*$, so $\epsilon$ is not a
  $\tau_j$-infinitesimal, and $\epsilon \notin \mm_j$.
\end{proof}

\begin{theorem}\label{local-thm}
  Let $(K,\tau)$ be a local W-topological field.
  \begin{enumerate}
  \item $\tau$ has a unique V-topological coarsening.
  \item If $K^*$ is an ``ultrapower,'' if $R \subseteq K^*$ is the
    ring induced by $\tau$, and $\widetilde{R}$ is its integral
    closure, then $\widetilde{R}$ is a valuation ring.
  \end{enumerate}
\end{theorem}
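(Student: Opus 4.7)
I plan to derive both conclusions simultaneously by showing that if we write $\widetilde{R} = \Oo_1 \cap \cdots \cap \Oo_m$ as the intersection of pairwise incomparable valuation rings, then $m = 1$. By Proposition \ref{v-coarses}, the $\Oo_i$ correspond bijectively to the V-topological coarsenings of $\tau$, so $m = 1$ delivers both uniqueness in (1) and ``valuation ring'' in (2) at once.

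The argument is by contradiction, assuming $m \geq 2$. By Proposition \ref{local-dict}, locality of $\tau$ forces $R$ to be a local ring; call its unique maximal ideal $\mm$. Let $\mm_i$ denote the maximal ideal of $\Oo_i$. By the remark preceding Proposition \ref{v-coarses} (using Corollary~6.7 of \cite{prdf2}), the $\Oo_i$ are the localizations of $\widetilde{R}$ at its maximal ideals, so each $\mm_i \cap \widetilde{R}$ is a maximal ideal of $\widetilde{R}$. Since $\widetilde{R}$ is integral over $R$, the going-up theorem then guarantees that $\mm_i \cap R$ is a maximal ideal of $R$ for every $i$. But $R$ is local, so these maximal ideals all coincide with $\mm$; in particular $\mm_1 \cap R = \mm_2 \cap R$, and hence $\mm_1 \cap R \subseteq \mm_2 \cap R$.

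This directly contradicts Lemma \ref{incomparable}, which was set up precisely to forbid such an inclusion for $i \ne j$. Hence $m = 1$, as desired. Almost all of the real work has already been done in the previous section: Lemma \ref{incomparable} rests on Lemma \ref{tops2}, which in turn extracts topological content from the density lemma \ref{jac-den} and the scrambling-based Lemma \ref{tops-not-same}. Theorem \ref{local-thm} itself is a short commutative-algebra coda. The only step that requires any care is the use of going-up, and its applicability rests on one observation worth stating explicitly: the maximal ideals of $\widetilde{R}$ contract, via integrality over $R$, to the single maximal ideal of the local ring $R$, so that the conclusion of Lemma \ref{incomparable} becomes unsustainable once $m \geq 2$.
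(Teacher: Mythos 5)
Your proof is correct, and it follows the paper's overall strategy: reduce both claims to showing $m=1$ in the decomposition $\widetilde{R} = \Oo_1 \cap \cdots \cap \Oo_m$, using the locality of $R$ (Proposition~\ref{local-dict}) together with Lemma~\ref{incomparable}, which carries all the real content. The only divergence is the final commutative-algebra step. The paper invokes Chevalley's extension theorem to produce a valuation ring $\Oo \supseteq R$ whose maximal ideal contracts to the maximal ideal $\pp$ of $R$; since $\Oo$ must contain some $\Oo_i$, one gets $\mm_i \cap R = \pp$ for that one $i$, and then $\pp = \mm_i \cap R \supseteq \mm_j \cap R$ for every $j$ (every proper ideal of the local ring $R$ lies in $\pp$), contradicting Lemma~\ref{incomparable}. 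You instead note that each $\mm_i \cap \widetilde{R}$ is a maximal ideal of $\widetilde{R}$ (the $\Oo_i$ being the localizations of $\widetilde{R}$ at its maximal ideals) and that maximal ideals contract to maximal ideals along the integral extension $R \subseteq \widetilde{R}$, so \emph{all} the $\mm_i \cap R$ equal $\mm$ simultaneously---contradicting Lemma~\ref{incomparable} even more directly. Your route is marginally more elementary, replacing Chevalley's theorem by the standard lying-over corollary, and yields the stronger intermediate fact that every $\mm_i$ contracts to $\mm$ rather than just one. The only quibble is attribution: the fact you need is ``in an integral extension a prime upstairs is maximal iff its contraction is maximal,'' which is a corollary in the going-up circle of results rather than the going-up theorem itself; it is standard and correctly applied here.
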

\begin{proof}
  The two statements are equivalent by Proposition~\ref{v-coarses}; we
  prove the second one.  Take the canonical decomposition $\widetilde{R} =
  \Oo_1 \cap \cdots \cap \Oo_n$.  Let $\mm_i$ denote the maximal ideal
  of $\mm$.  For each $i$, the intersection $\mm_i \cap R$ is a prime
  ideal $\pp_i \in \Spec R$.  By Lemma~\ref{incomparable}, the $\pp_i$
  are pairwise incomparable.

  Let $\pp$ be the maximal ideal of the local ring $R$.  By
  Chevalley's theorem, there is a valuation ring $\Oo$ with maximal
  ideal $\mm$, such that $\Oo \supseteq R$ and $\Oo \cap R = \mm$.
  Now $\Oo \supseteq R$ implies $\Oo \supseteq \widetilde{R}$, which
  in turn implies $\Oo \supseteq \Oo_i$ for some $i$ (\cite{prdf2},
  Corollary~6.8).  Then
  \begin{equation*}
    \Oo \supseteq \Oo_i \implies \mm \subseteq \mm_i \implies \mm \cap
    R \subseteq \mm_i \cap R.
  \end{equation*}
  Thus $\pp_i \supseteq \pp$.  As $\pp$ is the maximal ideal, we have
  $\pp_i = \pp$.  Then
  \[ \pp_i = \pp \supseteq \pp_j\]
  for all $j$, contradicting the pairwise incomparability of the
  $\pp_i$---unless $n = 1$.
\end{proof}

\subsection{Some commutative algebra}
Let $R$ be a domain.  We let $\MaxSpec R$ denote the set of maximal
ideals in $R$.
\begin{definition}
  A \emph{key localization} of $R$ is a
  localization $R_\pp$ for some maximal ideal $\pp \in \MaxSpec R$.
\end{definition}
We view $R_\pp$ as a subring of $K = \Frac(R)$.
\begin{proposition}\label{calg}
  Let $R$ be a domain.
  \begin{enumerate}
  \item \label{c1} $R$ equals the intersection of its key localizations.
  \item If $\pp_1, \pp_2$ are two distinct maximal ideals of $R$, then
    $R_{\pp_1}$ is incomparable to $R_{\pp_2}$.
  \item \label{c3} If $A \subseteq K$ is a local ring containing $R$, then $A$
    contains a key localization of $R$.
  \end{enumerate}
  Therefore, the key localizations of $R$ are the minimal local
  subrings of $K$ containing $R$, and they are in bijection with the
  maximal ideals of $R$.
\end{proposition}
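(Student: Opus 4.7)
The plan is to dispatch the three clauses separately by standard commutative algebra; no serious obstacle is expected. For (\ref{c1}), I would use the classical conductor-ideal trick. Given $x \in \bigcap_{\pp \in \MaxSpec R} R_\pp$, form the ideal $I = \{r \in R : rx \in R\}$. For each maximal $\pp$, writing $x = a/s$ with $s \in R \setminus \pp$ gives $s \in I \setminus \pp$, so $I$ escapes every maximal ideal of $R$ and hence equals $R$; thus $1 \in I$, i.e., $x \in R$.

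For clause (2), note that distinct maximal ideals are automatically incomparable, so we may pick $x \in \pp_2 \setminus \pp_1$. Then $x$ is a non-unit of $R_{\pp_2}$, whereas $1/x$ lies in $R_{\pp_1}$ because $x \in R \setminus \pp_1$; this rules out $R_{\pp_1} \subseteq R_{\pp_2}$, and the reverse containment fails symmetrically.

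For clause (\ref{c3}), let $\mm_A$ be the maximal ideal of $A$ and set $\pp := \mm_A \cap R$, a prime ideal of $R$. Every $r \in R \setminus \pp$ lies outside $\mm_A$, hence is a unit in $A$, so $R_\pp \subseteq A$. By Zorn, extend $\pp$ to some maximal ideal $\qq$ of $R$; since $\pp \subseteq \qq$ implies $R \setminus \qq \subseteq R \setminus \pp$, we obtain $R_\qq \subseteq R_\pp \subseteq A$, as desired. The concluding ``therefore'' sentence then drops out: clause (\ref{c3}) places a key localization inside every local subring of $K$ containing $R$, and clause (2) says no key localization properly contains another, so each key localization is a minimal local overring of $R$ in $K$; the bijection with $\MaxSpec R$ is tautological from the definition in one direction and from (2) in the other.
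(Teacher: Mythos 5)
Your proposal is correct and follows essentially the same route as the paper: the conductor ideal $I=\{r\in R: rx\in R\}$ for clause (\ref{c1}), an element of one maximal ideal but not the other for clause (2), and pulling back the maximal ideal of $A$ to a prime of $R$ and enlarging to a maximal ideal for clause (\ref{c3}). The only cosmetic differences are that you run (\ref{c1}) in the direct rather than contrapositive direction and in (\ref{c3}) you pass through the intermediate localization $R_{\mm_A\cap R}$ before shrinking to a key localization; neither changes the substance.
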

\begin{proof}
  \begin{enumerate}
  \item Let $R'$ be the intersection of the key localizations.
    Clearly $R \subseteq R'$.  Conversely, suppose $x \notin R$.  Let
    $I = \{y \in R : xy \in R\}$.  Then $I$ is a proper ideal in $R$,
    because $1 \notin I$.  Take a maximal ideal $\pp$ containing $I$.
    We claim $x \notin R_\pp$.  Otherwise, $x = a/s$ for some $a \in
    R$ and $s \in R \setminus \pp$.  Then $s \in I$, contradicting $I
    \subseteq \pp$.  Thus $x \notin R_\pp$, and $x \notin R'$.
  \item Suppose $\pp_1, \pp_2$ are distinct.  Then $\pp_1, \pp_2$ are
    incomparable.  Take $x \in \pp_1 \setminus \pp_2$.  Then $1/x \in
    R_{\pp_2}$.  If $1/x \in R_{\pp_1}$, then $1/x = a/s$ for some $a
    \in R$ and $s \in R \setminus \pp_1$.  But then $s = ax \in
    \pp_1$, a contradiction.  So $1/x$ shows that $R_{\pp_2} \not
    \subseteq R_{\pp_1}$.  By symmetry, $R_{\pp_1} \not \subseteq
    R_{\pp_2}$.
  \item Let $\mm$ be the maximal ideal of $A$.  Then $\mm \cap R$ is a
    prime ideal in $R$, so $\mm \cap R \subseteq \pp$ for some $\pp
    \in \MaxSpec R$.  Then
    \begin{align*}
      x \in R & \implies x \in A \\
      x \in R \setminus \pp & \implies x \in A \setminus \mm \implies x^{-1} \in A.
    \end{align*}
    Therefore $R_\pp \subseteq A$. \qedhere
  \end{enumerate}
\end{proof}

\begin{lemma}\label{re-intersect}
  Let $R$ be a domain.  Let $R_1, \ldots, R_n$ be among the key
  localizations of $R$.  Let $R' = \bigcap_{i = 1}^n R_i$.  Then the
  key localizations of $R'$ are exactly $R_1, \ldots, R_n$.
\end{lemma}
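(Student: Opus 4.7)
The plan is to identify the maximal ideals of $R'$ explicitly. Write $R_i = R_{\pp_i}$ for maximal ideals $\pp_1,\ldots,\pp_n \in \MaxSpec R$, let $\mm_i = \pp_i R_i$ be the maximal ideal of $R_i$, and set $\qq_i = \mm_i \cap R'$, a prime ideal of $R'$. I will show that $R'_{\qq_i} = R_i$ and that $\MaxSpec R' = \{\qq_1, \ldots, \qq_n\}$; by Proposition~\ref{calg}, this gives exactly what the lemma asserts.

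First I would verify $R'_{\qq_i} = R_i$. The inclusion $R'_{\qq_i} \subseteq R_i$ is automatic: $R' \subseteq R_i$, and any element of $R' \setminus \qq_i$ sits outside $\mm_i$, hence is a unit of $R_i$. For the reverse inclusion, given $x \in R_i$, write $x = a/t$ with $a \in R$ and $t \in R \setminus \pp_i$. Both $a$ and $t$ lie in $R \subseteq R'$, and $t \notin \pp_i$ forces $t \notin \mm_i$, hence $t \notin \qq_i$. So $x = a/t \in R'_{\qq_i}$.

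Next I would check that each $\qq_i$ is a maximal ideal of $R'$. Since $\qq_i \cap R = \mm_i \cap R = \pp_i$, there is a chain of injections $R/\pp_i \hookrightarrow R'/\qq_i \hookrightarrow R_i/\mm_i$. The outer two terms coincide, because $\pp_i$ is maximal in $R$ already gives $R_i/\mm_i = R/\pp_i$. Hence $R'/\qq_i$ is a field and $\qq_i$ is maximal. Distinctness is immediate: $R'_{\qq_i} = R_i$ and the $R_i$ are pairwise distinct (in fact incomparable) by Proposition~\ref{calg}.

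The main step is ruling out any other maximal ideal of $R'$. Suppose $\qq \in \MaxSpec R'$ differs from every $\qq_i$. Then by maximality, $\qq$ is incomparable with each $\qq_i$, so $\qq \not\subseteq \qq_i$ for every $i$. By prime avoidance, $\qq \not\subseteq \bigcup_i \qq_i$, so there exists $s \in \qq$ with $s \notin \qq_i$ for all $i$. Then $s$ is a unit in each $R'_{\qq_i} = R_i$, whence $1/s \in \bigcap_i R_i = R'$. This contradicts $s \in \qq$ being a non-unit of $R'$. Thus the maximal ideals of $R'$ are exactly $\qq_1, \ldots, \qq_n$, and their key localizations $R'_{\qq_i} = R_i$ are the desired collection. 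There is no serious obstacle: the key trick $s = t$ in the computation $R'_{\qq_i} = R_i$ is immediate once one thinks to use it, and the final step is a textbook application of prime avoidance.
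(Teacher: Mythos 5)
Your proof is correct. The core mechanism is the same as the paper's: prime avoidance applied to a maximal ideal that is not among the expected ones, followed by the observation that an element avoiding all the relevant primes becomes invertible in each $R_i$ and hence in $R' = \bigcap_i R_i$, contradicting its membership in a maximal ideal. The packaging differs, though. The paper never computes $\MaxSpec R'$: it invokes the characterization in Proposition~\ref{calg} of key localizations as the minimal local subrings of $\Frac(R)$ containing the ring, and so only needs to show that an arbitrary local subring $A \supseteq R'$ contains some $R_i$; its prime avoidance is run inside $R$, comparing $\mm \cap R$ against the $\pp_i$. You instead identify the maximal ideals of $R'$ explicitly as $\qq_i = \mm_i \cap R'$ and verify $R'_{\qq_i} = R_i$ directly, running prime avoidance inside $R'$ itself. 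Your route costs two extra verifications (maximality of $\qq_i$ via the sandwich $R/\pp_i \hookrightarrow R'/\qq_i \hookrightarrow R_i/\mm_i$, and the computation $R'_{\qq_i} = R_i$), but it yields strictly more information --- a complete description of $\MaxSpec R'$ --- whereas the paper's argument is shorter because it delegates the bookkeeping to Proposition~\ref{calg}. Both are sound; neither has a gap.
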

\begin{proof}
  Let $K = \Frac(R)$.  By Proposition~\ref{calg}, it suffices to prove
  the following: if $A$ is a local subring of $K = \Frac(R)$, and $A
  \supseteq R'$, then $A \supseteq R_i$ for some $i$.

  Let $\mm$ be the maximal ideal of $A$.  Then $\mm \cap R$ is a prime
  ideal in $R$.  Let $\pp_i$ be the maximal ideal of $R$ whose
  localization is $R_i$.  By the prime avoidance lemma, one of two
  things happens:
  \begin{itemize}
  \item $\mm \cap R \subseteq \pp_i$ for some $i$.  Then $R_i
    \subseteq A$ as in the proof of Proposition~\ref{calg}(\ref{c3}).
  \item $\mm \cap R \not \subseteq \bigcup_{i = 1}^n \pp_i$.  In this
    case, take $x \in (\mm \cap R) \setminus \bigcup_{i = 1}^n \pp_i$.
    Then $1/x \in R_{\pp_i}$ for all $i$, so $1/x \in R' \subseteq A$.
    But this contradicts the fact that $x \in \mm = A \setminus
    A^\times$. \qedhere
  \end{itemize}
\end{proof}

\begin{lemma}\label{nice-local}
  Let $R$ be a local domain with maximal ideal $\pp$.  If the integral
  closure $\widetilde{R}$ is a local ring with maximal ideal $\qq$, then $\qq
  \cap R = \pp$.
\end{lemma}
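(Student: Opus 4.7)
The plan is to use standard facts about integral extensions, together with the locality of both $R$ and $\widetilde{R}$. I will prove the two containments $\qq \cap R \subseteq \pp$ and $\pp \subseteq \qq \cap R$ separately; neither requires any of the topological machinery developed earlier in the paper.

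For the containment $\qq \cap R \subseteq \pp$, I take $x \in R \setminus \pp$. Since $R$ is local with maximal ideal $\pp$, $x$ is a unit in $R$, hence also a unit in the overring $\widetilde{R}$. So $x \notin \qq$, since $\qq$ is the maximal ideal of $\widetilde{R}$ and therefore consists exactly of the non-units. This shows $R \setminus \pp \subseteq \widetilde{R} \setminus \qq$, i.e., $\qq \cap R \subseteq \pp$.

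For the reverse containment $\pp \subseteq \qq \cap R$, I take $x \in \pp$ and suppose for contradiction that $x \notin \qq$. Then $x$ is a unit in the local ring $\widetilde{R}$, so $1/x \in \widetilde{R}$. Since $\widetilde{R}$ is integral over $R$, there is a monic integral equation
\[
(1/x)^n + a_{n-1}(1/x)^{n-1} + \cdots + a_1 (1/x) + a_0 = 0
\]
with coefficients $a_i \in R$. Multiplying through by $x^{n-1}$ yields
\[
1/x = -\bigl(a_{n-1} + a_{n-2} x + \cdots + a_0 x^{n-1}\bigr) \in R,
\]
so $x$ is actually a unit in $R$, contradicting $x \in \pp$. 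Hence $x \in \qq$, proving $\pp \subseteq \qq \cap R$.

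There is no real obstacle here; the argument is a direct application of the definition of integrality combined with the fact that in a local ring the maximal ideal is precisely the complement of the unit group. (Equivalently, one could invoke the lying-over/incomparability theorem for integral extensions: $\qq \cap R$ is a prime of $R$, and integrality forces it to be maximal, hence equal to $\pp$ by locality of $R$. The direct argument above is just an unpacking of this standard fact.)
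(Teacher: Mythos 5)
Your proof is correct and follows essentially the same route as the paper: the easy containment via the fact that non-units of the local ring $R$ are exactly $\pp$ (the paper phrases this as $\qq\cap R$ being a prime ideal, hence contained in the unique maximal ideal), and the reverse containment by writing down a monic integral equation for $1/x$ and clearing denominators to reach a contradiction (the paper multiplies by $x^n$ to get $-1\in\pp$, you multiply by $x^{n-1}$ to get $1/x\in R$ — the same computation). No gaps.
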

\begin{proof}
  The intersection $\qq \cap R$ is a prime ideal in $R$, so $\qq \cap
  R \subseteq \pp$.  If equality does not hold, take $x \in \pp
  \setminus \qq$.  Then $x \notin \qq \implies 1/x \in \widetilde{R}$,
  so there exist $a_0, \ldots, a_{n-1} \in R$ such that
  \[ x^{-n} + a_{n-1} x^{-(n-1)} + \cdots + a_1x^{-1} + a_0 = 0,\]
  or equivalently,
  \[ -1 = a_{n-1} x + a_{n-2} x^2 + \cdots + a_1 x^{n-1} + a_0 x^n.\]
  The right side is in $\pp$ and the left is not, a contradiction.
\end{proof}

\begin{lemma}\label{nice-semilocal}
  Let $R$ be a domain with finitely many maximal ideals $\pp_1,
  \ldots, \pp_n$.  Suppose that for each key localization $R_{\pp_i}$,
  the integral closure $\widetilde{R_{\pp_i}}$ is a valuation ring.
  Then
  \begin{enumerate}
  \item \label{nr1} For each key localization $R_\pp$, the integral
    closure $\widetilde{R_\pp}$ is a key localization of the integral
    closure $\widetilde{R}$.
  \item \label{nr2} This map establishes a bijection from the key
    localizations of $R$ to the key localizations of $\widetilde{R}$.
  \end{enumerate}
\end{lemma}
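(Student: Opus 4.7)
The plan is to exploit the standard commutative-algebra fact that integral closure commutes with localization: for $S_i := R \setminus \pp_i$ we have $\widetilde{R_{\pp_i}} = S_i^{-1} \widetilde{R}$. Since $\widetilde{R_{\pp_i}}$ is by hypothesis a valuation ring, it is local; let $\mm_i$ denote its maximal ideal. Lemma~\ref{nice-local} applied to the local domain $R_{\pp_i}$ and its local integral closure $\widetilde{R_{\pp_i}}$ gives $\mm_i \cap R_{\pp_i} = \pp_i R_{\pp_i}$. Under the bijection between primes of $S_i^{-1}\widetilde{R}$ and primes of $\widetilde{R}$ disjoint from $S_i$, the ideal $\mm_i$ corresponds to a unique prime $\qq_i \subseteq \widetilde{R}$, and contracting to $R$ yields $\qq_i \cap R = \pp_i$.

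For part (\ref{nr1}) I would next verify that $\qq_i$ is actually a maximal ideal of $\widetilde{R}$ and that $\widetilde{R_{\pp_i}} = \widetilde{R}_{\qq_i}$. Since $\widetilde{R}$ is integral over $R$, incomparability for integral extensions forbids any strict inclusion $\qq_i \subsetneq \qq'$ in $\widetilde{R}$---such a $\qq'$ would contract to a prime of $R$ strictly containing the maximal ideal $\pp_i$---so $\qq_i$ is maximal. The equality $\widetilde{R_{\pp_i}} = \widetilde{R}_{\qq_i}$ then follows easily: one inclusion is $S_i \subseteq \widetilde{R} \setminus \qq_i$, and the reverse uses that any $x \in \widetilde{R}\setminus\qq_i$ lies outside the unique maximal ideal of the local ring $S_i^{-1}\widetilde{R}$, hence is already a unit there.

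For part (\ref{nr2}), injectivity of $\pp_i \mapsto \qq_i$ is immediate from $\qq_i \cap R = \pp_i$. For surjectivity, let $\qq$ be any maximal ideal of $\widetilde{R}$; by lying-over together with incomparability, $\qq \cap R$ is a maximal ideal of $R$, hence equal to some $\pp_i$. Then $\qq$ is disjoint from $S_i$, and its image in $S_i^{-1}\widetilde{R} = \widetilde{R_{\pp_i}}$ is a maximal ideal of that local ring, which forces the image to be $\mm_i$ and hence $\qq = \qq_i$.

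No step here looks genuinely hard: once one realizes that the hypotheses permit a fibrewise application of Lemma~\ref{nice-local} at each $\pp_i$, everything else is a routine deployment of going-up, lying-over, and incomparability for integral ring extensions. The only point that demands a bit of care is checking that the partial localization $S_i^{-1}\widetilde{R}$ genuinely coincides with the localization $\widetilde{R}_{\qq_i}$ at a single maximal prime, but, as sketched, the locality of $\widetilde{R_{\pp_i}}$ collapses these.
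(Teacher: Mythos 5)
Your proof is correct, but it takes a genuinely different route from the paper's. You work entirely inside standard commutative algebra: the identity $\widetilde{R_{\pp_i}} = S_i^{-1}\widetilde{R}$ (integral closure commutes with localization), together with lying-over, incomparability, and maximality-transfer for the integral extension $R \subseteq \widetilde{R}$, reduces everything to chasing primes. Notably, the hypothesis that each $\widetilde{R_{\pp_i}}$ is a valuation ring enters your argument only through its being \emph{local} (needed for Lemma~\ref{nice-local} and for the equality $S_i^{-1}\widetilde{R} = \widetilde{R}_{\qq_i}$), so you in fact prove a slightly more general statement. The paper argues valuation-theoretically instead: it characterizes $\widetilde{R}$ as the intersection of all valuation rings containing $R$, uses Proposition~\ref{calg}(\ref{c3}) to identify the $\widetilde{R_{\pp_i}}$ as the minimal such valuation rings, shows via Lemma~\ref{nice-local} that they are pairwise incomparable, and then quotes Proposition~6.2(7) of \cite{prdf2} to conclude that pairwise incomparable valuation rings are the key localizations of their intersection. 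Your approach avoids that external citation at the cost of invoking the (standard but nontrivial) commutation of integral closure with localization; the paper's leans on the valuation-ring hypothesis and machinery already developed for multi-valuation rings. One small point of care in your surjectivity step: the image of $\qq$ in $S_i^{-1}\widetilde{R}$ is a priori only a prime ideal, but since it is contained in the unique maximal ideal $\mm_i$ of that local ring, contracting already gives $\qq \subseteq \qq_i$ and hence $\qq = \qq_i$ by maximality of $\qq$; the stronger claim that the image is itself maximal is true (it lies over the maximal ideal of $R_{\pp_i}$ in an integral extension) but is not needed.
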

\begin{proof}
  The integral closure $\widetilde{R}$ is the intersection of all
  valuation rings on $\Frac(R)$ containing $R$.  If $\Oo$ is a
  valuation ring, the following are equivalent:
  \begin{itemize}
  \item $\Oo \supseteq R$
  \item $\Oo \supseteq R_{\pp_i}$ for some $i$, by
    Proposition~\ref{calg}(\ref{c3}).
  \item $\Oo \supseteq \widetilde{R_{\pp_i}}$, since $\Oo$ is
    integrally closed.
  \end{itemize}
  Therefore, the $\widetilde{R_{\pp_i}}$ are the minimal valuation
  rings containing $R$, and
  \[ \widetilde{R} = \bigcap_{i = 1}^n \widetilde{R_{\pp_i}}.\]
  \begin{claim}
    The $\widetilde{R_{\pp_i}}$ are pairwise incomparable.
  \end{claim}
  \begin{claimproof}
    Let $\mm_i$ be the maximal ideal of the valuation ring
    $\widetilde{R_{\pp_i}}$.  By Lemma~\ref{nice-local}, $\mm_i$
    restricts to the maximal ideal on $R_{\pp_i}$, and then to the
    ideal $\pp_i$ on $R$.  That is, $\mm_i \cap R = \pp_i$.  Then for
    any $i, j$
    \begin{equation*}
      \widetilde{R_{\pp_i}} \subseteq \widetilde{R_{\pp_j}} \iff \mm_i
      \supseteq \mm_j \implies \mm_i \cap R \supseteq \mm_j \cap R
      \iff \pp_i \supseteq \pp_j.
    \end{equation*}
    The $\pp_i$ are pairwise incomparable, being maximal ideals,
    and so the claim is proven.
  \end{claimproof}
  Therefore, the $\widetilde{R_{\pp_i}}$ are pairwise incomparable
  valuation rings, with intersection $\widetilde{R}$.  By Proposition
  6.2(7) in \cite{prdf2}, the $\widetilde{R_{\pp_i}}$ are the key
  localizations of $\widetilde{R}$.  The map $R_\pp \mapsto
  \widetilde{R_\pp}$ is injective by the Claim.
\end{proof}

\subsection{The non-local case}

\begin{theorem}\label{thm:non-loc1}
  Let $\tau$ be a W-topology on $K$.  Let $\tau_1, \ldots, \tau_n$ be
  the local components of $\tau$, in the sense of
  Definition~\ref{def:comps}.
  \begin{itemize}
  \item Each $\tau_i$ has a unique V-topological coarsening.
  \item This establishes a bijection between the local components of
    $\tau$ and the V-topological coarsenings of $\tau$.
  \end{itemize}
\end{theorem}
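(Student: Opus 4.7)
The plan is to combine the already-established local case (Theorem~\ref{local-thm}) with the commutative-algebra bijection of Lemma~\ref{nice-semilocal}, translating back and forth through the dictionary between W-topologies and $\vee$-definable rings developed in \S\ref{sec:machine}.

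The first bullet is nearly immediate: each local component $\tau_i$ is itself a local W-topology (as observed right after Proposition~\ref{local-dict}), so Theorem~\ref{local-thm} directly furnishes a unique V-topological coarsening $\sigma_i$ of $\tau_i$. For the second bullet I would fix an ``ultrapower'' $K^*$ and pass to rings. Let $R = R_\tau$, and let $R_1,\ldots,R_n$ be the key localizations of $R$, which by Proposition~\ref{int-loc} correspond to the local components $\tau_1,\ldots,\tau_n$. By Theorem~\ref{local-thm}(2) applied to each $\tau_i$, the integral closure $\widetilde{R_i}$ is a valuation ring. That is exactly the hypothesis of Lemma~\ref{nice-semilocal}, which then supplies a bijection $R_i \mapsto \widetilde{R_i}$ from the key localizations of $R$ to those of $\widetilde{R}$, and in particular displays $\widetilde{R} = \widetilde{R_1}\cap\cdots\cap\widetilde{R_n}$ as the canonical decomposition into pairwise incomparable valuation rings.

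To finish, I would invoke Proposition~\ref{v-coarses}: the V-topological coarsenings of $\tau$ correspond bijectively to the valuation rings appearing in this canonical decomposition, i.e.\ to the $\widetilde{R_i}$. Because $R \subseteq R_i \subseteq \widetilde{R_i}$, the topology induced by $\widetilde{R_i}$ is a V-topological coarsening of $\tau_i$, and hence, by the uniqueness in Theorem~\ref{local-thm}, must coincide with $\sigma_i$. Thus the ring-level bijection $R_i \mapsto \widetilde{R_i}$ is precisely the topological assignment $\tau_i \mapsto \sigma_i$, which is therefore a bijection from $\{\tau_1,\ldots,\tau_n\}$ onto the set of V-topological coarsenings of $\tau$. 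The main obstacle has already been overcome upstream: the genuinely hard input is Theorem~\ref{local-thm}, which rests on the density-in-the-Jacobson-radical argument (Lemma~\ref{jac-den}) and the scrambling construction of \S\ref{sec:scramble}; at this stage essentially only careful bookkeeping through the topology-to-ring dictionary and Lemma~\ref{nice-semilocal} remains.
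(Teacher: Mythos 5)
Your proposal is correct and follows essentially the same route as the paper: first point via Theorem~\ref{local-thm}, then passing to the ring picture in an ``ultrapower,'' applying Theorem~\ref{local-thm}(2) to each key localization $R_i$, invoking Lemma~\ref{nice-semilocal} to identify the $\widetilde{R_i}$ with the key localizations of $\widetilde{R}$, and concluding with Proposition~\ref{v-coarses}. Nothing is missing.
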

\begin{proof}
  The first point follows by Theorem~\ref{local-thm}.  For the second
  point, take an ``ultrapower'' $K^*$.  Let $R$ and $R_i$ be the
  $\vee$-definable rings induced by $\tau$ and $\tau_i$.  By
  Definition~\ref{def:comps}, the $R_i$ are the key localizations of
  $R$.  By Theorem~\ref{local-thm}, each integral closure
  $\widetilde{R_i}$ is a valuation ring, corresponding to the unique
  V-topological coarsening of $\tau_i$.  By
  Lemma~\ref{nice-semilocal}, the $\widetilde{R_i}$ are pairwise
  distinct, and are exactly the key localizations of $\widetilde{R}$.
  By Proposition~\ref{v-coarses}, the key localizations of
  $\widetilde{R}$ correspond exactly to the the V-topological
  coarsenings of $\tau$.
\end{proof}

\begin{definition}
  Let $\tau, \tau_1, \ldots, \tau_n$ be topologies on $K$.  Then
  $\tau$ is an \emph{independent sum} of $\tau_1, \ldots, \tau_n$ if
  the diagonal map
  \[ (K,\tau) \to (K,\tau_1) \times \cdots \times (K,\tau_n)\]
  is a homeomorphism onto its image, and the image is dense.
\end{definition}
More explicitly, this means that the following conditions hold:
\begin{itemize}
\item The following is a filter basis for $\tau$:
  \begin{equation*}
    \{U_1 \cap \cdots \cap U_n : U_1 \in \tau_1, ~ U_2 \in \tau_2,
    \ldots, U_n \in \tau_n\}.
  \end{equation*}
\item If $a_i \in K$ and $U_i \in \tau_i$ for all $i$, then
  $\bigcap_{i = 1}^n (a_i + U_i) \ne \emptyset$.
\end{itemize}
In other words, the $\tau_i$ are jointly independent, and they
generate $\tau$.
\begin{lemma}\label{associator}
  If $\sigma$ is an independent sum of $\tau_1, \ldots, \tau_{n-1}$,
  and $\tau$ is an independent sum of $\sigma$ and $\tau_n$, then
  $\tau$ is an independent sum of $\tau_1, \ldots, \tau_n$.
\end{lemma}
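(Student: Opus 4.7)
The plan is to verify the two defining conditions of independent sum for $\tau$ and $\tau_1,\ldots,\tau_n$: the filter basis condition, and the density condition. Both will follow by chaining the corresponding conditions for the sum $\sigma = \tau_1 \oplus \cdots \oplus \tau_{n-1}$ with the sum $\tau = \sigma \oplus \tau_n$; essentially, this is an associativity check.

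For the filter basis condition, I would argue as follows. Given any $W \in \tau$, the hypothesis that $\tau$ is an independent sum of $\sigma$ and $\tau_n$ provides $V \in \sigma$ and $U_n \in \tau_n$ with $V \cap U_n \subseteq W$. Applying the hypothesis on $\sigma$ to $V$ yields $U_i \in \tau_i$ for $i < n$ with $\bigcap_{i < n} U_i \subseteq V$, and then $\bigcap_{i=1}^n U_i \subseteq W$. Conversely, every set $\bigcap_{i=1}^n U_i$ with $U_i \in \tau_i$ is a genuine $\tau$-neighborhood of $0$: by the first independent sum, $\bigcap_{i < n} U_i$ is a $\sigma$-neighborhood of $0$, and then intersecting with $U_n$ gives a $\tau$-neighborhood of $0$ by the second. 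Thus $\{\bigcap_{i=1}^n U_i : U_i \in \tau_i\}$ is a filter basis for $\tau$.

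For the density condition, the key idea is to first shrink. Given $a_1,\ldots,a_n \in K$ and $U_i \in \tau_i$, use the fact that each $\tau_i$ is a (locally bounded) ring topology to choose $W_i \in \tau_i$ with $W_i + W_i \subseteq U_i$. The density for $\sigma$, applied to $(a_1,\ldots,a_{n-1})$ and $(W_1,\ldots,W_{n-1})$, gives $b \in \bigcap_{i < n} (a_i + W_i)$. Now $V := \bigcap_{i < n} W_i$ is a $\sigma$-neighborhood of $0$ by the filter basis description just established for $\sigma$, so the density for $\tau = \sigma \oplus \tau_n$ gives some $c \in (b + V) \cap (a_n + U_n)$. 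For $i < n$ we have $c - b \in W_i$, so $c \in b + W_i \subseteq a_i + W_i + W_i \subseteq a_i + U_i$; and $c \in a_n + U_n$ directly. Hence $c \in \bigcap_{i=1}^n (a_i + U_i)$, proving nonemptiness.

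I do not expect any serious obstacle here. The one non-mechanical step is the shrink-then-apply trick in the density argument: without refining each $U_i$ to $W_i$ with $W_i + W_i \subseteq U_i$, one gets stuck trying to extract a $\sigma$-neighborhood from the raw set $\bigcap_{i < n} (a_i + U_i)$, since that set is only guaranteed to be nonempty, not open. Once that observation is in place the proof is routine.
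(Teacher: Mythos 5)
Your proof is correct, but it takes a more hands-on route than the paper. The paper argues softly: it observes that the class $\mathcal{F}$ of topological embeddings with dense image is closed under composition and under taking products with a fixed factor, and then writes the diagonal $(K,\tau) \to \prod_{i=1}^n (K,\tau_i)$ as the composite of $(K,\tau) \to (K,\sigma)\times(K,\tau_n)$ with $(K,\sigma)\times(K,\tau_n)\to \prod_{i<n}(K,\tau_i)\times(K,\tau_n)$. You instead verify the two explicit conditions (filter basis and density) directly; your shrink-then-apply step, choosing $W_i$ with $W_i+W_i\subseteq U_i$, is exactly the concrete counterpart of the general-topology fact that a composition of dense embeddings is dense (there one pulls back a nonempty open set along the outer map; here one pulls back a basic neighborhood along translation). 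The paper's argument is shorter and works for arbitrary topological spaces, with no appeal to the group structure; yours uses continuity of addition in each $\tau_i$, which is harmless in this setting and has the virtue of making the quantifier bookkeeping completely explicit. Both proofs are sound.
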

\begin{proof}
  Let $\mathcal{F}$ be the class of topological embeddings with dense
  image.  Then $\mathcal{F}$ is closed under composition.
  Additionally, if $f : X \to Y$ is in $\mathcal{F}$ and $Z$ is
  another topological space, then $X \times Z \to Y \times Z$ is in
  $\mathcal{F}$.  By assumption, the diagonal maps
  \begin{align*}
    (K,\sigma) &\to (K,\tau_1) \times \cdots \times (K,\tau_{n-1}) \\
    (K,\tau) & \to (K,\sigma) \times (K,\tau_{n-1})
  \end{align*}
  are in $\mathcal{F}$.  Therefore $\mathcal{F}$ also contains the
  composition
  \begin{equation*}
    (K,\tau) \to (K,\sigma) \times (K,\tau_{n}) \to (K,\tau_1)
    \times \cdots \times (K,\tau_{n-1}) \times (K,\tau_n),
  \end{equation*}
  which is the diagonal map.
\end{proof}

\begin{theorem}\label{thm:non-loc2}
  Let $\tau$ be a W-topology on a field $K$.  Let $\tau_1, \ldots,
  \tau_n$ be the local components of $\tau$.  Then $\tau$ is an
  independent sum of the $\tau_i$.
\end{theorem}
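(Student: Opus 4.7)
I would verify the two defining conditions of an independent sum separately: (i) that the $\tau_i$ generate $\tau$ as a filter basis at $0$, and (ii) that they are jointly independent. Throughout I pass to an ``ultrapower'' $K^*$ and set $R = R_\tau$ and $R_i = R_{\tau_i}$, using Definition~\ref{def:comps} and Proposition~\ref{calg}(\ref{c1}) to identify the $R_i$ as the key localizations of $R$ and record the equality $R = \bigcap_{i=1}^n R_i$.

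For (i), fix a bounded neighborhood $U$ defining $\tau$ and bounded neighborhoods $U_i$ defining each $\tau_i$. Each $U_i^*$ lies in $R_i$, so $\bigcap_i U_i^* \subseteq \bigcap_i R_i = R$; since $U^*$ is co-embeddable with $R$ by Proposition~\ref{p2}, there exists $c \in (K^*)^\times$ with $c \cdot \bigcap_i U_i^* \subseteq U^*$. This assertion is a local sentence in $(K,\tau,\tau_1,\ldots,\tau_n)$, so by the local equivalence of Proposition~\ref{p2}(\ref{p2e3}) some $c' \in K^\times$ satisfies $\bigcap_i c'U_i \subseteq U$. Rescaling by an arbitrary $d \in K^\times$ then shows that the finite intersections $\bigcap_i c_iU_i$ (with $c_i \in K^\times$) form a filter basis for $\tau$. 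Since each $\tau_i$ is a coarsening of $\tau$ (by $R_i \supseteq R$ and Proposition~\ref{p1}(\ref{p1e4})), this is exactly the statement that the $\tau_i$ generate $\tau$.

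For (ii), I appeal to Theorem~\ref{thm:non-loc1}: each $\tau_i$ admits a unique V-topological coarsening $\sigma_i$, and the $\sigma_i$ are pairwise distinct V-topologies corresponding (via Proposition~\ref{v-coarses}) to the pairwise incomparable valuation rings $\Oo_1,\ldots,\Oo_n$ whose intersection is $\widetilde{R}$. Pairwise incomparable valuation rings give pairwise independent V-topologies, and I then invoke the independence criterion of (\cite{prdf5}, Theorem~7.16) to lift the pairwise independence of the $\sigma_i$ to the joint independence of the $\tau_i$ themselves. The main obstacle is this invocation: the author flags the reduction as working only ``more or less,'' so some bookkeeping may be needed to verify that the hypotheses of Theorem~7.16 match the present setup---most likely via an induction on $n$ using Lemma~\ref{associator} and a two-topology base case, where joint density reduces to a Chinese-remainder/approximation-style argument for $R_1$ and $R_2$ inside the semi-local integral closure $\widetilde{R}$.
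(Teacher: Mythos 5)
Your proposal is correct and follows essentially the same route as the paper: generation comes from $R=\bigcap_i R_i$ together with transfer of local sentences, and joint independence comes from an induction through Lemma~\ref{associator} using Theorem~\ref{thm:non-loc1} and the two-topology criterion of (\cite{prdf5}, Theorem~7.16). The ``bookkeeping'' you defer is exactly what the paper supplies via the intermediate rings $S_i=R_1\cap\cdots\cap R_i$ and Lemma~\ref{re-intersect}: these guarantee that each partial sum $\sigma_{i-1}$ is itself a W-topological coarsening of $\tau$ whose V-topological coarsenings are precisely $\widetilde{\tau_1},\ldots,\widetilde{\tau_{i-1}}$, so that $\sigma_{i-1}$ and $\tau_i$ have no common V-topological coarsening and Theorem~7.16 applies at each inductive step (note that what is needed there is disjointness of the sets of V-topological coarsenings, not pairwise independence of the valuations, which does not follow from incomparability in general).
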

\begin{proof}
  Fix an ``ultrapower'' $K^*$ of $K$.  Let $R, R_1, \ldots, R_n$ be
  the $\vee$-definable rings corresponding to $\tau, \tau_1, \ldots,
  \tau_n$.  The $R_i$ are the key localizations of $R$.  For $i \le
  n$, let $S_i = R_1 \cap \cdots \cap R_i$.  Each $S_i$ is a
  $K$-algebra that is $\vee$-definable over $K$.  Moreover, $S_i
  \supseteq R$, so each $S_i$ is a $W_n$-ring (Lemma~2.7 in
  \cite{prdf5}).  By Proposition~\ref{wdict-2}, each $S_i$ corresponds
  to a W-topology $\sigma_i$.  Additionally,
  \begin{align*}
    S_1 &= R_1 \\
    S_n &= R_1 \cap \cdots \cap R_n = R,
  \end{align*}
  by Proposition~\ref{calg}(\ref{c1}).  Thus $\sigma_1 = \tau_1$, and
  $\sigma_n = \tau$.  By Lemma~\ref{re-intersect}, the key
  localizations of $S_i$ are $R_1, \ldots, R_i$, and so the local
  components of $\sigma_i$ are $\tau_1, \ldots, \tau_i$.
  \begin{claim}\label{exclaim}
    $\sigma_i$ is an independent sum of $\sigma_{i-1}$ and $\tau_i$.
  \end{claim}
  \begin{claimproof}
    We first prove independence.  Let $\widetilde{\tau_j}$ denote the
    unique V-topological coarsening of $\tau_j$.  By
    Theorem~\ref{thm:non-loc1},
    \begin{itemize}
    \item the V-topological coarsenings of $\sigma_{i-1}$ are
      $\{\widetilde{\tau_1}, \ldots, \widetilde{\tau_{i-1}}\}$,
    \item the V-topological coarsenings of $\tau_i$ are
      $\{\widetilde{\tau_i}\}$,
    \end{itemize}
    and these two sets do not overlap.  So $\sigma_{i-1}$ and $\tau_i$
    have no common V-topological coarsenings.  As $\tau_i$ and
    $\sigma_{i-1}$ are both coarsenings of the original W-topology
    $\tau$, Theorem~7.16 of \cite{prdf5} applies, and $\tau_i$ and
    $\sigma_{i-1}$ are independent.

    It remains to show that $\sigma_{i-1}$ and $\tau_i$ generate
    $\sigma_i$.  Consider the topologies $\sigma_{i-1}^*, \tau_i^*,
    \sigma_i^*$ on $K^*$ induced by $S_{i-1}, R_i,$ and $S_i$,
    respectively.  Note $S_i = S_{i-1} \cap R_i$.  For any non-zero
    $a$, there are non-zero $b, c$ such that
    \[  b S_{i-1} \cap c R_i \subseteq aS_i.\]
    Indeed, if we take $b = c = a$, then equality holds.  This proves
    the local sentence
    \begin{equation}
      \forall U \in \sigma_i^* ~ \exists V \in \sigma_{i-1}^* ~
      \exists W \in \tau_i^* : V \cap W \subseteq U. \label{coinit1}
    \end{equation}
    Conversely,
    \begin{equation}
      \forall V \in \sigma_{i-1}^* ~ \forall W \in \tau_i^* ~ \exists
      U \in \sigma_i^* : U \subseteq V \cap W, \label{coinit2}
    \end{equation}
    because we can take $U = V \cap W$.  (Both $V$ and $W$ are in
    $\sigma_i^*$, because $\sigma_i^*$ is finer than $\sigma_{i-1}^*$
    and $\tau_i^*$.)  Equations (\ref{coinit1}) and (\ref{coinit2})
    are local sentences, so they hold for $\sigma_{i-1}, \tau_i,$ and
    $\sigma_i$, by Proposition~\ref{p2}(\ref{p2e3}).  That is,
    \begin{align*}
      \forall U \in \sigma_i ~ \exists V \in \sigma_{i-1} ~
      \exists W \in \tau_i &: V \cap W \subseteq U \\
      \forall V \in \sigma_{i-1} ~ \forall W \in \tau_i ~ \exists
      U \in \sigma_i &: U \subseteq V \cap W.
    \end{align*}
    This expresses that $\sigma_i$ is generated by $\sigma_{i-1}$ and
    $\tau_i$, finishing the proof of the Claim.
  \end{claimproof}
  Combining the Claim with Lemma~\ref{associator}, we see by induction
  on $i$ that $\sigma_i$ is an independent sum of $\tau_1, \ldots,
  \tau_i$.  Taking $i = n$, we get the desired result.
\end{proof}

\begin{corollary}\label{sq-weird}
  Let $(K,\tau)$ be a W-topological field.
  \begin{enumerate}
  \item\label{sw1} If $\characteristic(K) \ne 2$ and the squaring map
    $K^\times \to K^\times$ is an open map, then $\tau$ is local
    and has a unique V-topological coarsening.
  \item If $\characteristic(K) = p > 0$ and the Artin-Schreier map $K
    \to K$ is an open map, then $\tau$ is local and has a unique
    V-topological coarsening.
  \end{enumerate}
\end{corollary}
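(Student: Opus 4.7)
My plan is to apply the decomposition theorems of this section. Writing $\tau_1, \ldots, \tau_n$ for the local components of $\tau$, Theorems~\ref{thm:non-loc1} and \ref{thm:non-loc2} together yield that $\tau$ is the independent sum of the $\tau_i$, and that the assignment $\tau_i \mapsto \widetilde{\tau_i}$ is a bijection between the local components and the V-topological coarsenings of $\tau$. Hence $\tau$ is local with a unique V-topological coarsening exactly when $n = 1$, and both statements reduce to deriving a contradiction from the assumption $n \ge 2$. In each case I would exploit the approximation property built into an independent sum: for any $a_1, \ldots, a_n \in K$ and any neighborhoods $V_i \in \tau_i$ of $0$, the intersection $\bigcap_i (a_i + V_i)$ is nonempty.

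For part (1), assume $n \ge 2$. Since $\characteristic(K) \ne 2$ and $\tau_1$ is Hausdorff, pick a $\tau_1$-neighborhood $U_1 \ni 1$ with $U_1 \cap (-U_1) = \emptyset$; pick $\tau_i$-neighborhoods $U_i \ni 1$ for $i \ge 2$; and set $U = U_1 \cap \cdots \cap U_n$. By the openness of squaring, $U^2$ contains a $\tau$-neighborhood $W \ni 1$. Using continuity of squaring at $\pm 1$ together with the approximation property, produce $z \in K$ close to $1$ in each $\tau_i$ with $i \ne 2$, close to $-1$ in $\tau_2$, and close enough that $z^2 \in W$. Then $z^2 \in U^2$ forces $z^2 = x^2$ for some $x \in U$, i.e., $x \in \{z, -z\}$. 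On the other hand, Hausdorffness of $\tau_2$ ensures (for $U_2$ chosen small) that $z \notin U_2$, hence $z \notin U$; and $z \in U_1$ combined with $U_1 \cap (-U_1) = \emptyset$ gives $-z \notin U_1$, hence $-z \notin U$. Contradiction.

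For part (2), I would run the analogous argument with the Artin--Schreier map $\phi(x) = x^p - x$ replacing squaring and $0$ replacing $1$. Choose each $U_i$ to be a $\tau_i$-neighborhood of $0$ disjoint from preselected $\tau_i$-neighborhoods of the nonzero elements of $\Ff_p$ (possible since $\Ff_p$ is finite and $\tau_i$ is Hausdorff), and set $U = \bigcap_i U_i$. By openness, $\phi(U)$ contains a $\tau$-neighborhood $W \ni 0$. Since $n \ge 2$ and $|\Ff_p| \ge 2$, pick $c_1, \ldots, c_n \in \Ff_p$ that are not all equal, e.g., $c_1 = 0$ and $c_2 = 1$. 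Using continuity of $\phi$ at each $c_i$ (each is a fixed point of $\phi$) and the approximation property, produce $z$ close to $c_i$ in $\tau_i$ for every $i$ with $\phi(z) \in W$. Then $\phi(z) = \phi(x)$ for some $x \in U$, and $(x - z)^p = x - z$ in characteristic $p$ forces $x = z + j$ for some $j \in \Ff_p$. But for any such $j$, the choice of the $c_i$ ensures an index $i$ with $c_i + j \ne 0$; then $z + j$ is close to $c_i + j \in \Ff_p \setminus \{0\}$ in $\tau_i$, so $z + j \notin U_i$ and hence $z + j \notin U$. Contradiction.

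The principal technical obstacle is the bookkeeping of nested smallness choices: one must select the $U_i$ first, deduce $W$ via openness, and only then choose approximation neighborhoods of $0$ in each $\tau_i$ small enough that the resulting $z$ simultaneously delivers $z^2 \in W$ (resp.\ $\phi(z) \in W$) while $z, -z$ (resp.\ all $z + j$ with $j \in \Ff_p$) stay out of the appropriate $U_i$'s. The independent-sum approximation property, together with continuity of the algebraic operations and Hausdorffness of each $\tau_i$, makes this routine once the quantifier order is arranged correctly.
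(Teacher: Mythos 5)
Your proof is correct and follows essentially the same route as the paper's: decompose $\tau$ via Theorems~\ref{thm:non-loc1} and \ref{thm:non-loc2}, reduce to deriving a contradiction from $n \ge 2$, and use the density clause of the independent sum to produce an element near $1$ in one component and near $-1$ in another (resp.\ near $0$ and $1$ in the Artin--Schreier case), contradicting openness of the relevant map. The only nit is terminological: the elements of $\Ff_p$ are zeros of $x \mapsto x^p - x$, not fixed points, but $\phi(c_i)=0$ is exactly what your argument uses, so this is harmless.
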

\begin{proof}
  Let $\tau_1, \ldots, \tau_n$ be the local components of $\tau$.  By
  Theorem~\ref{thm:non-loc2}, $\tau$ is an independent sum of $\tau_1,
  \ldots, \tau_n$.  By Theorem~\ref{thm:non-loc1}, $n$ is the number
  of V-topological coarsenings.  Suppose for a contradiction that $n >
  1$.  If $\characteristic(K) \ne 2$, then the squaring map is not
  open, by the proof of Claim~6.9 in \cite{prdf5}.  Essentially, one
  chooses $x$ to be infinitesimally close to 1 with respect to
  $\tau_1, \tau_3, \tau_5, \ldots$, and infinitesimally close to $-1$
  with respect to $\tau_2, \tau_4, \tau_6, \ldots$.  Then $x \not
  \approx 1$ and $x^2 \approx 1$ with respect to $\tau$, which
  contradicts squaring being an open map.  The Artin-Schreier case is
  similar, using $0, 1$ instead of $1, -1$.
\end{proof}

\begin{corollary}
  ~
  \begin{enumerate}
  \item\label{end1.5} If $(K,+,\cdot,\ldots)$ is an unstable dp-finite
    field, then the canonical topology on $K$ is a local W-topology.
  \item\label{end1}  If $(K,+,\cdot,\ldots)$ is an unstable dp-finite field, then
    $K$ admits a unique definable V-topology.
  \item\label{end2} If $(K,+,\cdot,v)$ is a dp-finite valued field,
    then $v$ is henselian.
  \item\label{end3} If $(K,+,\cdot)$ is a dp-finite field that is
    neither finite, nor algebraically closed, nor real closed, then
    $K$ admits a non-trivial definable henselian valuation.
  \item\label{end4} The conjectural classification of dp-finite fields
    holds, as in Theorem~3.11 of \cite{halevi-hasson-jahnke}.
  \end{enumerate}
\end{corollary}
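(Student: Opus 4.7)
The plan is essentially bookkeeping, chaining Corollary~\ref{sq-weird}, the Facts about the canonical topology recalled in \S1.1, Proposition~6.4 of \cite{prdf4}, and Theorems~3.3 and~3.11 of \cite{halevi-hasson-jahnke}. First I would prove (\ref{end1.5}). By the Fact from \cite{prdf5} in \S1.1, the canonical topology $\tau$ on an unstable dp-finite field $K$ is a W-topology of weight at most $\dpr(K)$. In characteristic $0$, the Fact from \cite{prdf2} in \S1.1 asserts that the squaring map $K^\times \to K^\times$ is $\tau$-open, so Corollary~\ref{sq-weird}(\ref{sw1}) applies and delivers both that $\tau$ is local and that $\tau$ has a unique V-topological coarsening. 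In characteristic $p > 0$, uniqueness of a definable V-topology is already known by \cite{prdf} and \cite{hhj-v-top} as recalled in \S1.1; combined with part~(4) of the Fact from \cite{prdf5}---which identifies the V-topological coarsenings of $\tau$ with the definable V-topologies on $K$---this shows $\tau$ admits exactly one V-topological coarsening, and then Theorem~\ref{thm:non-loc1} forces the number of local components of $\tau$ to be $1$, so $\tau$ itself is local.

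Statement (\ref{end1}) now follows in either characteristic: by part~(4) of the Fact from \cite{prdf5}, the definable V-topologies on $K$ are exactly the V-topological coarsenings of $\tau$, and the argument for (\ref{end1.5}) has just shown that there is exactly one such coarsening.

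Statements (\ref{end2}) and (\ref{end3}) then fall out of (\ref{end1}) by Proposition~6.4 of \cite{prdf4}, which states that uniqueness of a definable V-topology on every unstable dp-finite expansion of a field implies both the dp-finite henselianity conjecture and the dp-finite Shelah conjecture. Finally, (\ref{end4}) is obtained from (\ref{end2}) and~(\ref{end3}) via Theorems~3.3 and~3.11 of \cite{halevi-hasson-jahnke}, as announced after the statement of Theorem~1.3 in the introduction. No substantial obstacle remains at this stage: all of the serious content has been absorbed into Corollary~\ref{sq-weird}, i.e., into the W-topological decomposition Theorems~\ref{thm:non-loc1} and~\ref{thm:non-loc2} together with the scrambling and density arguments of \S\ref{sec:tech}.
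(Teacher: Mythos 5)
Your proposal is correct, and parts (\ref{end1}) through (\ref{end4}) are handled exactly as in the paper: uniqueness of the definable V-topology via the identification of definable V-topologies with V-topological coarsenings of the canonical topology, then Proposition~6.4 of \cite{prdf4} and Theorems~3.3 and 3.11 of \cite{halevi-hasson-jahnke}. The one place you diverge is positive characteristic in part (\ref{end1.5}): the paper stays self-contained by invoking the openness of the Artin--Schreier map (Proposition~5.17(5) of \cite{prdf2}) and Corollary~\ref{sq-weird}(2), which directly yields locality and uniqueness of the V-topological coarsening in characteristic $p$ (and in particular covers characteristic $2$, where the squaring-map clause of Corollary~\ref{sq-weird} does not apply). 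You instead import the previously known characteristic-$p$ uniqueness from \cite{prdf} and \cite{hhj-v-top}, and then run Theorem~\ref{thm:non-loc1} backwards to conclude there is a single local component, hence (via Proposition~\ref{local-dict}, which you should cite explicitly to pass from ``$R_\tau$ has one maximal ideal'' to ``$\tau$ is local'') that the canonical topology is local. Both routes are valid; the paper's has the advantage of not relying on the external uniqueness result, while yours illustrates that the decomposition theorem alone converts uniqueness of the V-topological coarsening into locality.
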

\begin{proof}
  The canonical topology on $K$ is a W-topology by Theorem~6.3 in
  \cite{prdf5}.  By Proposition~5.17(4-5) in \cite{prdf2} and
  compactness, the canonical topology has the following properties:
  \begin{itemize}
  \item For every neighborhood $U \ni 1$ there is a neighborhood $V
    \ni 1$ such that
    \begin{equation*}
      V \subseteq \{x^2 : x \in U\}.
    \end{equation*}
    Equivalently, the squaring map $K^\times \to K^\times$ is an open
    map.
  \item If $\characteristic(K) = p$, then for every neighborhood $U
    \ni 0$ there is a neighborhood $V \ni 0$
    \begin{equation*}
      V \subseteq \{x^p - x : x \in U\}.
    \end{equation*}
    Equivalently, the Artin-Schreier map $K \to K$ is an open map.
  \end{itemize}
  By Corollary~\ref{sq-weird}, the canonical topology is local and has
  a unique V-topological coarsening.  This proves part \ref{end1.5}.
  By Theorem~6.6 in \cite{prdf5}, the V-topological coarsenings of the
  canonical topology are exactly the definable V-topologies.  This
  proves part \ref{end1}.  The remaining points then follow by (the
  proof of) Proposition~6.4 in \cite{prdf4}.
\end{proof}
We note another characterization of local W-topologies.
\begin{proposition}\label{lchar}
  Let $\tau$ be a W-topology on a field $K$.  The following are equivalent:
  \begin{enumerate}
  \item \label{lchar1} $\tau$ is not local.
  \item \label{lchar2} $\tau$ is an independent sum of two
    W-topologies.
  \item \label{lchar3} $\tau$ is an independent sum of finitely many
    W-topologies.
  \end{enumerate}
\end{proposition}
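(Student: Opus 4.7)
The plan is to prove the cycle $(1) \Rightarrow (2) \Rightarrow (3) \Rightarrow (1)$, reading ``finitely many'' in (3) as meaning at least two (otherwise (3) holds trivially for every W-topology). The implication $(2) \Rightarrow (3)$ is immediate.

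For $(1) \Rightarrow (2)$, I would let $\tau_1, \ldots, \tau_n$ be the local components of $\tau$ in the sense of Definition~\ref{def:comps}. By Proposition~\ref{local-dict}, non-locality of $\tau$ is equivalent to non-locality of the associated $\vee$-definable ring $R \subseteq K^*$, which means $R$ has at least two maximal ideals and hence at least two key localizations. Thus $n \geq 2$. I would then recycle the intermediate topologies $\sigma_i$ from the proof of Theorem~\ref{thm:non-loc2}: the ring $S_{n-1} = R_1 \cap \cdots \cap R_{n-1}$ contains $R$, so it is a $W_n$-ring by Lemma~2.7 of \cite{prdf5}, and Proposition~\ref{wdict-2} exhibits it as the ring $R_{\sigma_{n-1}}$ of a W-topology $\sigma_{n-1}$. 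Claim~\ref{exclaim} inside the proof of Theorem~\ref{thm:non-loc2}, applied at stage $i = n$, states precisely that $\tau = \sigma_n$ is an independent sum of the two W-topologies $\sigma_{n-1}$ and $\tau_n$, giving (2).

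For $(3) \Rightarrow (1)$, suppose $\tau$ is an independent sum of W-topologies $\tau'_1, \ldots, \tau'_m$ with $m \geq 2$. Each $\tau'_i$ admits at least one V-topological coarsening, by the second point of the second Fact in the introduction. Joint independence entails pairwise independence, and by the independence criterion of \cite{prdf5} (Theorem~7.16), independent W-topologies cannot share a V-topological coarsening. Since any V-topological coarsening of a $\tau'_i$ is automatically a V-topological coarsening of the finer topology $\tau$, I conclude that $\tau$ admits at least $m \geq 2$ distinct V-topological coarsenings. By Theorem~\ref{thm:non-loc1}, $\tau$ then has at least two local components, so the ring $R$ has at least two maximal ideals, and Proposition~\ref{local-dict} yields (1).

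The only subtlety is in $(1) \Rightarrow (2)$: writing $\tau$ as an independent sum \emph{of two W-topologies} requires the bundled factor $\sigma_{n-1}$ to itself be a W-topology, which is not automatic from purely topological considerations. The dictionary of \S\ref{sec:machine} handles it cleanly: $S_{n-1}$ is sandwiched between the W-ring $R$ and $K^*$, so Lemma~2.7 of \cite{prdf5} forces it to be a W-ring, and Proposition~\ref{wdict-2} then produces the desired W-topology. Once this is in place, the rest is just an application of results already assembled in \S\ref{sec:machine} and in the proof of Theorem~\ref{thm:non-loc2}.
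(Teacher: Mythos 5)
Your proof is correct, and the implications $(1)\Rightarrow(2)\Rightarrow(3)$ match the paper exactly: the paper likewise extracts $(1)\Rightarrow(2)$ from Claim~\ref{exclaim} in the proof of Theorem~\ref{thm:non-loc2}, writing $\tau=\sigma_n$ as an independent sum of $\sigma_{n-1}$ and $\tau_n$ (your extra remark that non-locality forces $n\ge 2$ via Proposition~\ref{local-dict} is a detail the paper leaves implicit but is worth making). Where you genuinely diverge is $(3)\Rightarrow(1)$. You argue through the coarsening machinery: each factor $\tau_i'$ has a V-topological coarsening, independent factors cannot share one, so $\tau$ acquires at least two V-topological coarsenings and hence at least two local components by Theorem~\ref{thm:non-loc1}. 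The paper instead gives a direct, self-contained topological contradiction: assuming $\tau$ local and an independent sum, it shows intersections of $\tau_i$-bounded sets are $\tau$-bounded, applies the defining condition $\forall x\in B\,(1/x\in C \text{ or } 1/(1-x)\in C)$, and uses independence to produce $x$ close to $1$ in $\tau_1$ and close to $0$ in the other factors, contradicting both alternatives. Your route is shorter given the machinery already assembled and there is no circularity (Theorem~\ref{thm:non-loc1} does not depend on Proposition~\ref{lchar}); the paper's route has the virtue of not invoking the main theorems at all for this direction. One small caveat: you cite Theorem~7.16 of \cite{prdf5} for ``independent $\Rightarrow$ no common V-topological coarsening,'' whereas the paper only ever uses that theorem in the opposite direction; if it is not stated as a biconditional, you should supply the (easy) argument that a common Hausdorff coarsening $\sigma$ of $\tau_i'$ and $\tau_j'$ lets you separate $0$ from $1$ by $U\in\sigma\subseteq\tau_i'$ and $V\in\sigma\subseteq\tau_j'$ with $U\cap(1+V)=\emptyset$, violating independence.
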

\begin{proof}
  The implication (\ref{lchar1})$\implies$(\ref{lchar2}) follows by
  the Theorem~\ref{thm:non-loc2}, or rather by Claim~\ref{exclaim} in
  the proof.  In the notation of the proof, $\tau$ equals $\sigma_n$,
  which is an independent sum of $\sigma_{n-1}$ and $\tau_n$.

  The implication (\ref{lchar2})$\implies$(\ref{lchar3}) is trivial.
  It remains to prove (\ref{lchar3})$\implies$(\ref{lchar1}).  Suppose
  $\tau$ is an independent sum of $\tau_1, \ldots, \tau_n$, but $\tau$
  is local.
  \begin{claim}
    If $B_i$ is $\tau_i$-bounded for $i = 1, \ldots, n$, then
    $\bigcap_{i = 1}^n B_i$ is $\tau$-bounded.
  \end{claim}
  \begin{claimproof}
    By Lemma~2.1(e) in \cite{PZ}, a set $B$ is bounded if for every
    neighborhood $U$, there is a neighborhood $V$ such that $B \cdot V
    \subseteq U$.  Let $U_1 \cap \cdots \cap U_n$ be a basic
    neighborhood in $\tau$, so that each $U_i$ is a neighborhood in
    $\tau_i$.  Then there are $V_i \in \tau_i$ such that $B_i \cdot
    V_i \subseteq U_i$.  Then
    \begin{equation*}
      \left( \bigcap_{i = 1}^n B_i \right) \cdot \left( \bigcap_{i =
        1}^n V_i \right) \subseteq \bigcap_{i = 1}^n U_i,
    \end{equation*}
    and $\bigcap_{i = 1}^n V_i$ is in $\tau$.
  \end{claimproof}
  For each $i$, take a $\tau_i$-bounded set $B_i$ containing both $0$
  and $1$ in its interior (i.e., $B_i \in \tau_i$ and $B_i - 1 \in
  \tau_i$).  Let $B = B_1 \cap \cdots \cap B_n$.  Then $B$ is
  $\tau$-bounded, by the Claim.  Because $\tau$ is local, there is a
  $\tau$-bounded set $C$ such that
  \[ \forall x \in B : (1/x \in C \text{ or } 1/(1-x) \in C).\]
  The fact that $\tau$ is finer than $\tau_i$ implies that $\tau$ has
  fewer bounded sets, so $C$ is $\tau_i$-bounded for each $i$.

  For each $i$, take small enough $D_i \in \tau_i$ to ensure that
  \begin{itemize}
  \item $D_i \cdot C$ is contained in the neighborhood $K \setminus
    \{1\}$, i.e., $1 \notin D_i \cdot C$.
  \item $D_i \subseteq B_i$ and $1 - D_i \subseteq B_i$.
  \end{itemize}
  By independence, there is $x \in (1 - D_1) \cap D_2 \cap D_3 \cap
  \cdots \cap D_n$.  Then $x \in B_1 \cap \cdots \cap B_n = B$, so one
  of two things happens:
  \begin{itemize}
  \item $1/x \in C$.  But $x \in D_2$, so then $1 = x \cdot (1/x) \in
    D_2 \cdot C$, a contradiction.
  \item $1/(1-x) \in C$.  But $1 - x \in D_1$, so then $1 = (1-x)
    \cdot (1/(1-x)) \in D_1 \cdot C$, a contradiction. \qedhere
  \end{itemize}
\end{proof}

\section{Remaining questions}
We leave the following natural questions to future work.
\begin{enumerate}
\item By Theorem~\ref{thm:non-loc2} and Proposition~\ref{lchar}, every
  W-topology decomposes into an independent sum of indecomposable
  W-topologies.  Is this decomposition unique?
\item If a W-topology $\tau$ decomposes as an independent sum of
  $\tau_1, \ldots, \tau_n$, is it true that $\wt(\tau) = \wt(\tau_1) +
  \cdots + \wt(\tau_n)$?
\item If $\tau_1, \ldots, \tau_n$ are jointly independent
  W-topologies, do they generate a W-topology?
\item If $\tau_1, \tau_2$ are two W-topologies without any common
  V-topological coarsenings, then are $\tau_1, \tau_2$ necessarily
  independent?
\item Local topologies of weight 2 on fields of characteristic 0 are
  exactly the ``DV-topologies'' of (\cite{prdf4}, Definition~8.18).
  This follows by the classification in \S 8.2 of \cite{prdf5}.  Can
  this sort of classification be generalized to higher ranks, or
  positive characteristic?
\item Let $(K,\tau)$ be a W-topological field, and $K^*$ be an
  ``ultrapower.''  Let $R$ be the associated $\vee$-definable ring of
  $K$-bounded elements in $K^*$.  Let $I$ be the type-definable ideal
  of $K$-infinitesimal elements in $K^*$.  Is $R/I$ always an artinian
  ring of length equal to $\wt(\tau)$?
\item In several places, we used the special properties of the rings
  $R = R_\tau$ arising in Proposition~\ref{p1}.  For example,
  \begin{itemize}
  \item If $\wt(R) = n$, then $R$ induces a topology of weight exactly
    $n$.
  \item $R$ is local if and only if $R$ induces a local W-topology.
  \end{itemize}
  Is there a natural algebraic condition implying these properties,
  and satisfied by the rings $R_\tau$ of Proposition~\ref{p1}?  If so,
  there may be a more natural class of rings hidden inside the
  $W_n$-rings.  This natural class would probably include
  multi-valuation rings $\Oo_1 \cap \cdots \cap \Oo_n$ for which the
  $\Oo_i$ are pairwise \emph{independent}, as well as the ring $R$ of
  \S 8.4 in \cite{prdf4}.
\item Do the global fields $\Qq$ and $\Ff_p(t)$ support any local
  W-topologies other than the usual V-topologies?
\item If $K$ is an unstable dp-finite field, does every heavy
  definable set have non-empty interior?  How much ``tame topology''
  can we prove?
\item Do the techniques used to analyze dp-finite fields have any
  generalizations to finite-burden fields, or strongly dependent
  fields?
\item Let $(K,+,\cdot,\ldots)$ be an NIP field, possibly with extra
  structure.  Must every definable Hausdorff non-discrete field
  topology on $K$ be a W-topology?
\end{enumerate}

\begin{acknowledgment}
The author would like to thank Meng Chen for hosting the author at
Fudan University, where this research was carried out.  {\tiny This
  material is based upon work supported by the National Science
  Foundation under Award No. DMS-1803120.  Any opinions, findings, and
  conclusions or recommendations expressed in this material are those
  of the author and do not necessarily reflect the views of the
  National Science Foundation.}
\end{acknowledgment}

\bibliographystyle{plain} \bibliography{mybib}{}

\end{document}